\newcommand{\beq}{\begin{equation}}
\newcommand{\eeq}{\end{equation}}
\newcommand{\beqa}{\begin{eqnarray}}
\newcommand{\eeqa}{\end{eqnarray}}
\newcommand{\beqas}{\begin{eqnarray*}}
\newcommand{\eeqas}{\end{eqnarray*}}
\newcommand{\ba}{\begin{array}}
\newcommand{\ea}{\end{array}}
\newcommand{\bi}{\begin{itemize}}
\newcommand{\ei}{\end{itemize}}
\newcommand{\nn}{\nonumber}
\newcommand{\cX}{{\mathcal X}}
\newcommand{\cL}{{\mathcal L}}
\newcommand{\prox}{\mathrm{prox}}
\newcommand{\dom}{\mathrm{dom}}
\newcommand{\argmin}{\arg\min}
\newtheorem{lemma}{Lemma}
\newtheorem{thm}{Theorem}
\newtheorem{assumption}{Assumption}
\newtheorem{rem}{Remark}
\newcounter{spb}
\def\cO{{\cal O}}
\def\tf{{\tilde f}}
\def\tC{{C}}
\def\tr0{{\tilde r_0}}
\def\rr{{\mathbb{R}}}
\def\bbE{{\mathbb{E}}}
\def\barf{{\bar f}}
\def\mcO{{\mathcal O}}
\title{First-order methods for stochastic and finite-sum convex optimization with deterministic constraints}
\author{
Zhaosong Lu
\thanks{
Department of Industrial and Systems Engineering, University of Minnesota, USA (email: {\tt zhaosong@umn.edu}, {\tt xiao0414@umn.edu}). This work was partially supported by the Office of Naval Research under Award N00014-24-1-2702,  the Air Force Office of Scientific Research under Award FA9550-24-1-0343, and the National Science Foundation under Award IIS-2211491.
}
\and
Yifeng Xiao
\footnotemark[1]
}
\date{June 25, 2025}
\begin{document}
\maketitle

\begin{abstract}
In this paper, we study a class of stochastic and finite-sum convex optimization problems with deterministic constraints. Existing methods typically aim to find an $\epsilon$-\textit{expectedly feasible stochastic optimal} solution, in which the expected constraint violation and expected optimality gap are both within a prescribed tolerance $\epsilon$. However, in many practical applications, constraints must be nearly satisfied with certainty, rendering such solutions potentially unsuitable due to the risk of substantial violations. To address this issue, we propose stochastic first-order methods for finding an $\epsilon$-\textit{surely feasible stochastic optimal} ($\epsilon$-SFSO) solution, where the constraint violation is deterministically bounded by $\epsilon$ and the expected optimality gap is at most $\epsilon$. Our methods apply an accelerated stochastic gradient (ASG) scheme or a modified variance-reduced ASG scheme \emph{only once} to a sequence of quadratic penalty subproblems with appropriately chosen penalty parameters. We establish first-order oracle complexity bounds for the proposed methods in computing an $\epsilon$-SFSO solution. 
As a byproduct, we also derive first-order oracle complexity results for sample average approximation method in computing an $\epsilon$-SFSO  solution of the stochastic optimization problem using our proposed methods to solve the sample average problem.
\end{abstract}

\noindent{\bf Keywords:} stochastic optimization,  finite-sum optimization, accelerated gradient, variance reduction, single-loop scheme,  quadratic penalty,  sample average approximation

\medskip

\noindent{\bf Mathematics Subject Classification:}  90C15, 90C25, 90C30, 65K05

\section{Introduction} \label{introduction}

In this paper, we consider constrained stochastic convex optimization problems of the form 
\beq\label{prob1}
\begin{aligned}
F^*=\min_x & \ \, \{F(x):=\bbE[\tilde f(x,\xi)]+\psi(x)\} \\ 
\mbox{s.t.}& \ \, c(x)\leq0,
\end{aligned}
\eeq
where $\xi$ is a random variable with sample space $\Xi$, $\tilde{f}(\cdot, \xi)$ is a continuously differentiable convex function on $\rr^n$ for each $\xi \in \Xi$, $\psi: \rr^n \rightarrow \rr \cup {+\infty}$ is a proper closed convex function with an exactly evaluable proximal operator and domain denoted by $\cX$, and $c = (c_1, \ldots, c_m)$ is a Lipschitz smooth \textit{deterministic} mapping, with each component $c_i$ being 
convex.\footnote{For simplicity, we focus on problem \eqref{prob1} with convex inequality constraints only. However, our algorithms and results can be directly extended to problems that include both affine equality constraints and convex inequality constraints.}

Problem \eqref{prob1} arises in a variety of important areas, including energy systems \cite{shabazbegian2020stochastic}, healthcare \cite{shehadeh2022stochastic}, image processing \cite{luke2020proximal}, machine learning \cite{cuomo2022scientific,karniadakis2021physics},  network optimization \cite{bertsekasnetwork}, optimal control \cite{betts2010practical}, PDE-constrained optimization \cite{rees2010optimal}, resource allocation \cite{fontaine2020adaptive}, and transportation \cite{maggioni2009stochastic}. More applications can be found, for example, in \cite{bottou2018optimization,combettes2011proximal,jiang2018consensus,liu2015sparse} and references therein.

Numerous stochastic gradient methods have been developed to solve special cases of problem \eqref{prob1} where $c = 0$, $\mathbb{E}[\tilde f(\cdot,\xi)]$ is Lipschitz smooth, and $\mathcal{X}$ is compact. Notably, when $\Xi$ is an infinite set, stochastic gradient methods (e.g., \cite{lan2012optimal,nemirovski2009robust}) achieve a first-order oracle (FO) complexity of $\mathcal{O}(\epsilon^{-2})$---measured by the number of gradient evaluations of $\tilde f(\cdot,\xi)$---for finding an $\epsilon$-\textit{stochastic optimal solution}, i.e., a point $x$ satisfying $\mathbb{E}[F(x) - F^*] \leq \epsilon$. On the other hand, when $\Xi$ is a finite set, several stochastic gradient methods (e.g., \cite{defazio2014saga,schmidt2017minimizing,shalev2013stochastic,xiao2014proximal}) can be applied to find an $\epsilon$-stochastic optimal solution with improved FO complexity of $\widetilde{\mathcal{O}}(\epsilon^{-1})$,\footnote{$\widetilde{\mathcal{O}}(\cdot)$ represents $\mathcal{O}(\cdot)$ with logarithmic factors hidden.} where variance reduction techniques are utilized. More recently, optimal incremental gradient methods (e.g., \cite{lan2018optimal}) and accelerated variance-reduced stochastic methods (e.g., \cite{allen2018katyusha,lan2019unified}) have been developed to achieve (nearly) optimal complexity of $\widetilde{\mathcal{O}}(\epsilon^{-1/2})$ or $\mathcal{O}(\epsilon^{-1/2})$ for computing an $\epsilon$-stochastic optimal solution.

Stochastic gradient methods have also been developed for solving special instances of problem \eqref{prob1} with $c \neq 0$ and $\mathbb{E}[\tilde f(\cdot,\xi)]$ being Lipschitz smooth. Specifically, when $\Xi$ is an infinite set, a stochastic gradient method is proposed in \cite[Section 3.2]{lan2012optimal} for solving \eqref{prob1} where $\psi$ is the indicator function of a simple convex compact set and the constraints are linear equalities of the form $Ax - b = 0$. This method achieves an FO complexity of $\mathcal{O}(\epsilon^{-2})$ for finding a stochastic solution $x$ satisfying $\|Ax - b\| \leq \epsilon$ and $F(x) - F^* \leq \epsilon$ with \textit{high probability}. In addition, the constraint extrapolation method \cite{boob2023stochastic} can be applied to \eqref{prob1} to find a stochastic solution $x$ satisfying 
\beq \label{weak-eps-exp-opt-soln} 
\mathbb{E}[\|[c(x)]_+\|] \leq \epsilon, \quad \mathbb{E}[F(x) - F^*] \leq \epsilon, 
\eeq 
with an operation complexity of $\mathcal{O}(\epsilon^{-2})$. This complexity is measured by the number of convex subproblems solved, each involving simple quadratic constraints and an objective that is the sum of $\psi$ and a simple quadratic function (where ``simple'' means that the Hessian is a multiple of the identity matrix). Furthermore, when $\Xi$ is a finite set and $\psi$ is the indicator function of a simple convex compact set, the level-set method \cite{lin2018level} can also be applied to problem \eqref{prob1} to find a solution $x$ satisfying \eqref{weak-eps-exp-opt-soln}, achieving an FO complexity of $\widetilde{\mathcal{O}}(\epsilon^{-2})$.

In many applications such as energy systems \cite{shabazbegian2020stochastic}, machine learning \cite{cuomo2022scientific,karniadakis2021physics}, resource allocation \cite{fontaine2020adaptive}, and transportation \cite{maggioni2009stochastic}, all or some of the constraints in problem \eqref{prob1} are hard constraints that represent imperative requirements. Consequently, any desirable approximate solution must (nearly) satisfy these constraints. In this paper, we propose stochastic first-order methods with complexity guarantees for finding an $\epsilon$-\textit{surely feasible stochastic optimal} ($\epsilon$-SFSO) solution to problem \eqref{prob1}, that is, a point $x$ satisfying
\beq \label{eps-opt-soln} 
\|[c(x)]_+\| \leq \epsilon, \quad \bbE[|F(x)-F^*|] \leq \epsilon, 
\eeq 
 by considering two separate cases for the sample space $\Xi$: infinite or finite. 

Specifically, when  $\Xi$ is an infinite sample space, we propose a \textit{single-loop} stochastic first-order method (Algorithm \ref{alg1}) to solve problem \eqref{prob1} by applying an accelerated stochastic gradient scheme \cite{lan2012optimal} \textit{only once} to a sequence of quadratic penalty problems
\[
\min_x\left\{\bbE[\tf(x,\xi)]+\psi(x)+\frac{\rho_k}{2}\|[c(x)]_+\|^2\right\}
\]
for appropriately chosen penalty parameters $\{\rho_k\} \subset (0,\infty)$. Under suitable assumptions, we show that this method achieves a (nearly) optimal FO complexity of $\mcO(\epsilon^{-2})$ or $\widetilde\mcO(\epsilon^{-2})$ for finding an $\epsilon$-\emph{SFSO solution} $x$ satisfying \eqref{eps-opt-soln}, depending on whether constant or dynamic penalty parameters $\{\rho_k\}$ are used.

When $\Xi$ is a finite sample space with $\Xi = \{1, 2, \ldots, s\}$, we assume $\xi$ follows a uniform distribution for simplicity. In this case, problem \eqref{prob1} reduces to a constrained finite-sum optimization problem: \beq\label{prob2}
\begin{aligned}
F^*=\min_{x} &\  \, \Big\{F(x):=\frac{1}{s}\sum_{i=1}^s f_i(x) +\psi(x)\Big\}\\
\mbox{s.t.}&\ \, c(x)\leq0,
\end{aligned}
\eeq
where $f_i(x) := \tilde f(x, i)$. To solve problem \eqref{prob2}, we propose a stochastic first-order method 
(Algorithm \ref{alg2}) that applies a modified variance-reduced accelerated gradient scheme \emph{only once} to a sequence of associated quadratic penalty subproblems:
\beq \label{penalty-finitesum}
\min_{x}\Big\{\frac{1}{s}\sum_{i=1}^sf_i(x)+\psi(x)+\frac{\rho_k}{2}\|[c(x)]_+\|^2\Big\}
\eeq
for a suitably chosen sequence of penalty parameters ${\rho_k} \subset [0, \infty)$. Our modified scheme resembles a single loop of \cite[Algorithm 1]{lan2019unified}, with a key distinction: the smooth components of \eqref{penalty-finitesum}, namely $s^{-1} \sum_{i=1}^s f_i(x)$ and $\rho_k \|[c(x)]_+\|^2 / 2$, are treated separately. Specifically, the gradient of $s^{-1} \sum_{i=1}^s f_i(x)$ is approximated using a variance-reduced stochastic estimator, while the gradient of $\rho_k\|[c(x)]_+\|^2 / 2$ is computed exactly (see Section~\ref{sec:finite-sum} for details). We show that under suitable assumptions and $m=\mathcal{O}(1)$,  the proposed method achieves an FO complexity of $\mathcal{O}(s\log s+\sqrt{s}\epsilon^{-3/2})$ for finding an $\epsilon$-\emph{SFSO solution} $x$ satisfying \eqref{eps-opt-soln}, and an FO complexity of $\mathcal{O}(s\log s+\sqrt{s}\epsilon^{-1})$ or $\widetilde{\mathcal{O}}(s\log s+\sqrt{s}\epsilon^{-1})$ for finding an $\epsilon$-\textit{expectedly feasible stochastic optimal} ($\epsilon$-EFSO) solution, i.e., a point $x$ satisfying
\beq \label{eps-exp-opt-soln}  
\bbE[\|[c(x)]_+\|] \leq \epsilon, \quad \bbE[|F(x)-F^*|]\leq\epsilon,
\eeq 
depending on whether constant or dynamic penalty parameters ${\rho_k}$ are used. Comparing \eqref{eps-opt-soln} with \eqref{eps-exp-opt-soln}, it is evident that an $\epsilon$-SFSO solution is generally stronger than an $\epsilon$-EFSO solution.

As a byproduct, we analyze the FO complexity of sample average approximation (SAA) method (e.g., see \cite{birge2011introduction,kleywegt2002sample,shapiro2003monte,shapiro2009lectures,shapiro2000rate}) for computing an $\epsilon$-SFSO or $\epsilon$-EFSO solution of problem \eqref{prob1} with an infinite sample space $\Xi$. Specifically, we establish that the SAA method, when combined with a proposed stochastic first-order method, achieves an FO complexity of $\mathcal{O}(\epsilon^{-2})$ for computing an $\epsilon$-EFSO solution of \eqref{prob1}. Moreover, we show that the FO complexity for obtaining an $\epsilon$-SFSO solution of \eqref{prob1} via SAA is either $\mathcal{O}(\epsilon^{-3})$ or $\mathcal{O}(\epsilon^{-5/2})$, depending on whether (nearly) optimal deterministic first-order methods or a proposed stochastic first-order method (Algorithm \ref{alg2}) is used to solve the sample average problem. 

The main contributions of this paper are summarized as follows.

\begin{itemize}
 \item We propose a \textit{single-loop} stochastic first-order method (Algorithm~\ref{alg1}) for solving problem \eqref{prob1} with an infinite sample space. This method finds an $\epsilon$-EFSO solution---generally stronger than the commonly studied $\epsilon$-SFSO solution---with (nearly) optimal FO complexity of $\mathcal{O}(\epsilon^{-2})$ or $\widetilde{\mathcal{O}}(\epsilon^{-2})$.

\item We propose a stochastic first-order method (Algorithm~\ref{alg2}) for solving problem \eqref{prob1} with a finite sample space. It achieves an FO complexity of $\mathcal{O}(s\log s+\sqrt{s}\epsilon^{-3/2})$ for finding an $\epsilon$-SFSO solution, and $\mathcal{O}(s\log s+\sqrt{s}\epsilon^{-1})$ or $\widetilde{\mathcal{O}}(s\log s+\sqrt{s}\epsilon^{-1})$ for finding an $\epsilon$-EFSO solution, assuming $m = \mathcal{O}(1)$.

\item We establish FO complexity results for SAA method in computing an $\epsilon$-SFSO or $\epsilon$-EFSO solution of problem \eqref{prob1} with an infinite sample space, when either (nearly) optimal deterministic first-order methods or a proposed stochastic first-order method is used to solve the sample average problem.
\end{itemize}

To the best of our knowledge, this work is the first to study stochastic first-order methods for computing an $\epsilon$-EFSO solution of problem~\eqref{prob1} with provable complexity guarantees. Moreover, in the special case where $s=1$, Algorithm~\ref{alg2} reduces to a \textit{single-loop first-order penalty method} for solving deterministic constrained convex optimization problems, and achieves (nearly) optimal FO complexity of $\mathcal{O}(\epsilon^{-1})$ or $\widetilde{\mathcal{O}}(\epsilon^{-1})$, depending on whether constant or dynamic penalty parameters are used. 

The rest of the paper is organized as follows. Subsection~\ref{sec:notation} introduces the notation and assumptions used throughout the paper. Sections~\ref{sec:composite} and~\ref{sec:finite-sum} present our proposed stochastic first-order methods for solving problem~\eqref{prob1} under infinite and finite sample spaces, respectively, along with their convergence guarantees. Section~\ref{sec:SAA} establishes complexity results for sample average approximation method applied to problem~\eqref{prob1} with an infinite sample space. Numerical results are presented in Section~\ref{sec:results}, and the proofs of the main results are provided in Section~\ref{sec:proof}. Finally, concluding remarks are given in Section~\ref{sec:conclude}.

\subsection{Notation and assumptions}\label{sec:notation}
The following notation will be used throughout this paper. Let $\rr^n$ denote the Euclidean space of dimension $n$. We use $\langle\cdot, \cdot \rangle$ to denote the standard inner product, and $\|\cdot\|$ to denote Euclidean norm. The notation $[\cdot]_+$ denotes the projection operator onto the nonnegative orthant.

A mapping $\phi$ is said to be \emph{$L_{\phi}$-Lipschitz continuous} on a set $\Omega$ if $\|\phi(x)-\phi(x')\| \leq L_{\phi} \|x-x'\|$ for all $x,x'\in \Omega$. In addition, it is said to be \emph{$L_{\nabla\phi}$-smooth} on $\Omega$ if $\|\nabla\phi(x)-\nabla\phi(x')\| \leq L_{\nabla\phi} \|x-x'\|$ for all $x,x'\in \Omega$, where $\nabla \phi$ denotes the transpose of the Jacobian of $\phi$.

Throughout this paper, we make the following assumptions for problem \eqref{prob1}.
\begin{assumption} \label{a1}
\begin{enumerate}[label=(\roman*)]
\item The set $\cX$ (i.e., the domain of $\psi$) is bounded and the function $F$ has a bounded variation on $\cX$, i.e.,  there exist $D_\cX>0$ and $D_F>0$ such that $ \max_{x, y\in \cX} \|x-y\|\leq D_\cX$ and $\max_{x\in\cX}F(x)-\min_{x\in\cX}F(x)\leq D_F$.
\item Each constraint function $c_i$ is $L_{c_i}$-Lipschitz continuous and $L_{\nabla c_i}$-smooth on $\cX$, and satisfies $|c_i(x)| \leq C_i$ for all $x \in \cX$ and $i = 1, \ldots, m$.
\item The set of optimal Lagrange multipliers of problem \eqref{prob1}, denoted by $\boldsymbol{\Lambda^*}$, is nonempty. Consequently, $\Lambda:=\min_{\lambda\in \boldsymbol{\Lambda^*}} \|\lambda\|$ is finite.
\end{enumerate}
\end{assumption}
It follows from Assumption \ref{a1}(ii) and the convexity of $c_i$'s that $\|[c(x)]_+\|^2/2$ is $L_{\nabla c^2}$-smooth and convex, where
\begin{equation}\label{L_nablac}
    L_{\nabla c^2}:=\sum_{i=1}^m \left(L_{c_i}^2+C_iL_{\nabla c_i}\right).
\end{equation}

\section{A stochastic first-order method for problem \eqref{prob1}
with an infinite sample space}\label{sec:composite}

In this section, we consider problem \eqref{prob1} in which $\xi$ is a random variable with an infinite sample space $\Xi$. Specifically, we propose a single-loop stochastic first-order method to solve this problem by applying an accelerated stochastic gradient (ASG) scheme \cite{lan2012optimal} \textit{only once} to a sequence of quadratic penalty problems
\beq\label{def-Frho1}
F_{\rho_k}^*=\min_x\left\{F_{\rho_k}(x):=\bbE[\tf(x,\xi)]+\psi(x)+\frac{\rho_k}{2}\|[c(x)]_+\|^2\right\}
\eeq
for suitably chosen penalty parameters $\{\rho_k\} \subset (0,\infty)$.  For each $k \geq 1$, the ASG scheme generates three points, $y_k$, $z_{k+1}$, and $x_{k+1}$, as follows: it first forms a convex combination of $x_k$ and $z_k$ to obtain $y_k$; then it computes a stochastic gradient of $f_{\rho_k}(\cdot):=\bbE[\tf(\cdot,\xi)]+\rho_k\|[c(\cdot)]_+\|^2/2$ at $y_k$ and uses it in a proximal step associated with $\psi$ to obtain $z_{k+1}$; finally, it forms another convex combination of $x_k$ and $z_{k+1}$ to obtain $x_{k+1}$. Since $c$ is a smooth deterministic mapping, a stochastic gradient of $f_{\rho_k}$ at $y_k$ can be computed as $\nabla \tf(y^k,\xi_k)+\rho_k\nabla c(y^k)[c(y^k)]_+$, where $\xi_k$ is randomly sampled from $\Xi$. The resulting method for problem \eqref{prob1} with an infinite sample space is presented in Algorithm~\ref{alg1}.

\begin{algorithm}[H]
\caption{A stochastic first-order method for problem \eqref{prob1}
with an infinite sample space}\label{alg1}
\begin{algorithmic}[1]
\REQUIRE $x_1\in\cX$, $\{\beta_k\}\subset[1,\infty)$, $\{\gamma_k\}\subset (0,\infty)$, and $\{\rho_k\}\subset(0,\infty)$.
\STATE Set $z_1=x_1$.
\FOR{$k=1,2,\dots$}
\STATE $y_k=(1-\beta_k^{-1})x_k+\beta_k^{-1}z_k$. 
\STATE Sample $\xi_k\in \Xi$ and compute $g_k=\nabla \tilde f(y_k,\xi_k)+\rho_k\nabla c(y_k)[c(y_k)]_+$.
\STATE $z_{k+1}=\prox_{\gamma_k\psi}(z_k-\gamma_kg_k)$.
\STATE $x_{k+1}=(1-\beta_k^{-1})x_k+\beta_k^{-1}z_{k+1}$.
\ENDFOR \\
\end{algorithmic}
\end{algorithm}

Before presenting convergence results for Algorithm \ref{alg1}, we introduce the following additional assumption.

\begin{assumption}\label{a2}
The function $f(\cdot):=\bbE[\tilde f(\cdot,\xi)]$ is $L_{\nabla f}$-smooth on $\cX$, and $\tf$ satisfies the following conditions:
\begin{equation*}
\bbE[\nabla \tilde f(x,\xi)]=\nabla f(x),\quad\bbE[\|\nabla \tilde f(x,\xi)-\nabla f(x)\|^2]\leq\sigma^2,\quad \max_{\xi\in\Xi}\|\nabla \tilde f(x,\xi)\|\leq G \qquad \forall x\in\cX
\end{equation*}
for some constants $\sigma>0$ and $G>0$ independent of $x\in\cX$.
\end{assumption}

The next theorem presents convergence results for Algorithm \ref{alg1} with constant penalty parameters $\{\rho_k\}$, whose proof is deferred to Subsection \ref{sec:proof-composite}. 

\begin{thm}[{\bf constant penalty parameters}]\label{thm:composite_fixed}
Suppose that Assumptions \ref{a1} and \ref{a2} hold. Let $L_{\nabla c^2}$, $D_{\mathcal{X}}$, $\Lambda$, $L_{\nabla f}$, $\sigma$, and $G$ be given in \eqref{L_nablac} and Assumptions \ref{a1} and \ref{a2}, respectively. Assume that $x_K$ is generated by Algorithm \ref{alg1} for some $K \geq 2$, with the parameters chosen as
\begin{equation}\label{def-para2}
 \rho_k =\rho, \quad \beta_k=\frac{k+1}{2}, \quad\gamma_k=\frac{k+1}{4(L_{\nabla f}+\rho_k L_{\nabla c^2})}
\end{equation}
 for all $1\leq k<K$, where $\rho=K^{\frac{3}{2}}$. Then we have
 \begin{align}
     &\|[c(x_K)]_+\|\leq (6GD_\cX)^{1/2}K^{-3/4}+(12L_{\nabla c^2})^{1/2} D_\cX K^{-1}+2\Lambda K^{-3/2}+(12L_{\nabla f})^{1/2}D_\cX K^{-7/4},\label{thm2:ineq1}\\
     &\bbE[F(x_K)-F^*]
   \leq \big(6L_{\nabla c^2} D_\cX^2 +3\sigma^2L_{\nabla c^2}^{-1}\big)K^{-1/2}+ 6L_{\nabla f} D_\cX^2 K^{-2}, \label{thm2:ineq2} \\
     &\bbE[F(x_K)-F^*]\geq -\Lambda \Big((12L_{\nabla c^2} D_\cX^2+6\sigma^2L_{\nabla c^2}^{-1})^{1/2} K^{-1}+2\Lambda K^{-3/2}+(12L_{\nabla f})^{1/2} D_\cX K^{-7/4} \Big). \label{thm2:ineq3}
 \end{align}
\end{thm}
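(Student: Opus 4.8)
The plan is to analyze a single run of the ASG scheme of \cite{lan2012optimal} applied to the penalty function $F_{\rho}$ with the fixed $\rho = K^{3/2}$, and then convert the resulting optimality bound for the penalty problem into the three desired inequalities. First I would record the standard convergence guarantee for the accelerated stochastic gradient method on the composite problem $\min_x \{f_\rho(x) + \psi(x)\}$: with $\beta_k = (k+1)/2$ and $\gamma_k = (k+1)/\big(4(L_{\nabla f}+\rho L_{\nabla c^2})\big)$, the smooth part $f_\rho$ has smoothness modulus $L_{\nabla f} + \rho L_{\nabla c^2}$ and stochastic gradient variance $\sigma^2$ (since the $c$-term is evaluated exactly), so after $K-1$ steps one obtains a bound of the form
\[
\bbE[F_\rho(x_K) - F_\rho(x)] \leq \frac{c_1 (L_{\nabla f} + \rho L_{\nabla c^2}) \|x_1 - x\|^2}{K^2} + \frac{c_2 \sigma^2}{(L_{\nabla f}+\rho L_{\nabla c^2})\, } \cdot \frac{1}{K}
\]
for every $x \in \cX$, with absolute constants $c_1, c_2$; plugging in $\|x_1-x\|\le D_\cX$ and $\rho = K^{3/2}$ gives the $K^{-1/2}$ leading term and $K^{-2}$ correction term seen in \eqref{thm2:ineq2}. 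I would then take $x = x^*$, an optimal solution of \eqref{prob1}; since $[c(x^*)]_+ = 0$ we have $F_\rho(x^*) = F^*$, which immediately yields \eqref{thm2:ineq2} after identifying the constants ($c_1$ absorbing the $6$, and the $3\sigma^2 L_{\nabla c^2}^{-1}$ term coming from $c_2\sigma^2/(L_{\nabla f}+\rho L_{\nabla c^2}) \le 3\sigma^2/(\rho L_{\nabla c^2}) = 3\sigma^2 L_{\nabla c^2}^{-1} K^{-3/2}$, times the extra $K$ from the $1/K$ rate... I need to be careful here that the bookkeeping produces exactly $K^{-1/2}$).

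Next, for the feasibility bound \eqref{thm2:ineq1}, I would use a standard penalty/Lagrangian argument: let $\lambda^* \in \boldsymbol{\Lambda^*}$ with $\|\lambda^*\| = \Lambda$. By weak duality / the saddle-point inequality, for any $x \in \cX$,
\[
F^* \leq F(x) + \langle \lambda^*, c(x) \rangle \leq F(x) + \langle \lambda^*, [c(x)]_+\rangle \leq F(x) + \Lambda \|[c(x)]_+\|,
\]
hence $F(x_K) - F^* \geq -\Lambda \|[c(x_K)]_+\|$. Combining this with $F_\rho(x_K) - F^* \leq \delta_K$ (the ASG bound above, where $\delta_K$ denotes its right-hand side), and using $F_\rho(x_K) = F(x_K) + \tfrac{\rho}{2}\|[c(x_K)]_+\|^2$, gives
\[
\tfrac{\rho}{2}\|[c(x_K)]_+\|^2 \leq \delta_K + \Lambda \|[c(x_K)]_+\|.
\]
Solving this quadratic inequality in $t := \|[c(x_K)]_+\|$ yields $t \leq \Lambda/\rho + \sqrt{2\delta_K/\rho} + (\text{lower order})$, and with $\rho = K^{3/2}$ and $\delta_K$ of order $K^{-1/2}$ (for the main term), $\sqrt{2\delta_K/\rho}$ is of order $K^{-1}$, while one also needs to separately track the piece of $\delta_K$ that is itself stochastic in the deterministic feasibility bound. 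This is the subtle point: \eqref{thm2:ineq1} is a \emph{deterministic} bound, so the $\sigma$-dependent part of $\delta_K$ cannot appear there. I would resolve this by noting that the $c$-gradient is exact, so along the trajectory the quantity $\tfrac{\rho}{2}\|[c(x_K)]_+\|^2$ can be bounded deterministically using only the part of the ASG analysis that does not invoke the variance bound — specifically, bounding $f_\rho(x_K) - f_\rho(x^*)$ from above by the deterministic terms $GD_\cX$ (from $\max\|\nabla \tilde f\| \le G$, giving the $K^{-3/4}$ term after dividing by $\rho^{1/2}$) plus $L_{\nabla c^2} D_\cX^2$-type and $L_{\nabla f}D_\cX^2$-type terms — and then solving the same quadratic. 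The coefficients $6GD_\cX$, $12 L_{\nabla c^2}$, $12 L_{\nabla f}$, and the $2\Lambda K^{-3/2}$ term then drop out of the quadratic-formula estimate $t \le \Lambda/\rho + \sqrt{2\delta_K^{\mathrm{det}}/\rho}$ using $\sqrt{a+b+c}\le \sqrt a + \sqrt b + \sqrt c$.

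Finally, \eqref{thm2:ineq3} follows by taking expectations in $F(x_K) - F^* \geq -\Lambda\|[c(x_K)]_+\|$ and applying Jensen's inequality together with $\bbE[\|[c(x_K)]_+\|] \le (\bbE[\|[c(x_K)]_+\|^2])^{1/2}$, where $\bbE[\|[c(x_K)]_+\|^2] \le 2\delta_K/\rho$ from the expected version of the quadratic inequality (now $\delta_K$ may include the $\sigma^2$ term since we are in expectation); substituting $\rho = K^{3/2}$ and $\delta_K \le (\ldots)K^{-1/2} + 6L_{\nabla f}D_\cX^2 K^{-2}$ and distributing the square root across the sum gives the stated $K^{-1}$, $K^{-3/2}$, $K^{-7/4}$ terms. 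The main obstacle is the careful separation of deterministic and stochastic components of the ASG error so that \eqref{thm2:ineq1} is genuinely deterministic while \eqref{thm2:ineq2}–\eqref{thm2:ineq3} retain the sharper in-expectation rates; getting the constants to match the stated form requires a disciplined bookkeeping of which term in the ASG recursion contributes where, rather than any conceptually new idea.
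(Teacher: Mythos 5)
Your proposal is correct and follows essentially the same route as the paper: a single ASG pass on the fixed-penalty objective, a deterministic optimality-gap bound via $\max_\xi\|\nabla\tilde f(\cdot,\xi)\|\le G$ for the surely-feasible estimate \eqref{thm2:ineq1} and an in-expectation bound via $\sigma^2$ for \eqref{thm2:ineq2}--\eqref{thm2:ineq3}, followed by the Lagrangian quadratic inequality in $\|[c(x_K)]_+\|$ (this is exactly the content of the paper's Lemmas \ref{l-penalty}--\ref{l-sum-fixed}). The only slip is the displayed variance term $c_2\sigma^2/(L_\rho K)$, which should scale like $\sigma^2 K/L_\rho$ (the sum $\sum_i\gamma_i^2\asymp K^3/L_\rho^2$ divided by $(\beta_K-1)\gamma_K\asymp K^2/L_\rho$); with $\rho=K^{3/2}$ this is precisely what yields the $3\sigma^2L_{\nabla c^2}^{-1}K^{-1/2}$ term you were aiming for, so the bookkeeping you flagged does close.
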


\begin{rem} \label{remark1}
One can observe from Theorem \ref{thm:composite_fixed} that Algorithm \ref{alg1} with constant penalty parameters $\rho_k\equiv K^{3/2}$ achieves an FO complexity of $\cO(\epsilon^{-2})$ for finding an $\epsilon$-SFSO solution of problem \eqref{prob1} as defined in \eqref{eps-opt-soln}. This complexity matches the optimal FO complexity achieved by stochastic gradient methods for solving the unconstrained counterpart of \eqref{prob1}, i.e., the case where $c=0$. Consequently, the presence of the constraints does not affect the order of dependence of the complexity on $\epsilon$.
\end{rem}

While Algorithm \ref{alg1} with a constant penalty parameter achieves the optimal complexity, the pre-specified penalty parameter may be overly large in practice. To enhance its practical performance, we next consider Algorithm \ref{alg1} with dynamic penalty parameters $\{\rho_k\}$ and present its convergence results, with the proof deferred to Subsection \ref{sec:proof-composite}.

\begin{thm}[{\bf dynamic penalty parameters}]\label{thm:composite}
Suppose that Assumptions \ref{a1} and \ref{a2} hold. Let $L_{\nabla c^2}$, $D_{\mathcal{X}}$, $\Lambda$, $L_{\nabla f}$, $\sigma$, and $G$ be given in \eqref{L_nablac} and Assumptions \ref{a1} and \ref{a2}, respectively. Assume that $\{x_k\}$ is generated by Algorithm \ref{alg1} with the parameters chosen as
\begin{equation}\label{def-para1}
  \begin{aligned}
 \rho_k=(k+4)^{\frac{3}{2}}, \quad \beta_k=\frac{k+4}{5}, \quad\gamma_k=\frac{k+4}{10(L_{\nabla f}+\rho_k L_{\nabla c^2})}.
\end{aligned}  
\end{equation}
Then for all $k\geq2$, we have
\begin{align}
    \|[c(x_k)]_+\|
     &\leq (2\tC_2)^{1/2}k^{-3/4}+2(\tC_1L_{\nabla c^2})^{1/2}k^{-1}+2\Lambda k^{-3/2}+2(\tC_1L_{\nabla f})^{1/2}k^{-7/4}, \label{thm1-ineq1}\\
    \bbE[F(x_k)-F^*]&\leq 3\sqrt{3}\sigma^2L_{\nabla c^2}^{-1}k^{-1/2}\log k+2\sqrt{3}\tC_1L_{\nabla c^2}k ^{-1/2}+3\sigma^2L_{\nabla f}L_{\nabla c^2}^{-2}k^{-2}\log k+2\tC_1L_{\nabla f}k^{-2}, \label{thm1-ineq2}\\
    \bbE[F(x_k)-F^*]&\geq-\Lambda\Big(\sqrt{6}\sigma L_{\nabla c^2}^{-1/2} k^{-1}(\log k)^{1/2} +2(\tC_1L_{\nabla c^2})^{1/2}k^{-1}+2\Lambda k^{-3/2}\nn \\ 
  &\quad\quad\quad+\sigma L_{\nabla c^2}^{-1} (6L_{\nabla f})^{1/2} k^{-7/4}(\log k)^{1/2}+2(\tC_1L_{\nabla f})^{1/2}k^{-7/4}\,\Big),  \label{thm1-ineq3}
\end{align}
where 
 \begin{equation}\label{c12}
    \tC_1=25D_\cX^2+\frac{10\Lambda^2}{L_{\nabla c^2}},
    \quad \tC_2=\frac{20GD_\cX L_{\nabla f}}{\sqrt{6}L_{\nabla c^2}}+{120GD_\cX}.
 \end{equation}
\end{thm}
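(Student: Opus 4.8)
\textbf{Proof proposal for Theorem \ref{thm:composite}.}

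The plan is to run the ASG convergence analysis for a single pass over the penalty subproblems \eqref{def-Frho1}, exactly as in the proof of Theorem \ref{thm:composite_fixed}, but now tracking the $k$-dependence of the penalty parameter $\rho_k=(k+4)^{3/2}$ carefully so that the telescoped error terms come out with the stated rates. First I would recall (or re-derive from \cite{lan2012optimal}) the one-step descent inequality for the ASG scheme applied to a composite objective with $L$-smooth part having Lipschitz constant $L_{\nabla f}+\rho_k L_{\nabla c^2}$: with $\beta_k=(k+4)/5$ and $\gamma_k=(k+4)/(10(L_{\nabla f}+\rho_k L_{\nabla c^2}))$ the choice is such that $\beta_k\gamma_k^{-1}\geq 4(L_{\nabla f}+\rho_k L_{\nabla c^2})$ (or whatever the exact constant the single-loop analysis demands), so that a weighted sum $\sum_{k} w_k(F_{\rho_k}(x_{k+1})-F_{\rho_k}(x))$ telescopes against $\|z_k-x\|^2$ distances, leaving a residual controlled by $D_\cX^2$, by $\sum_k w_k\gamma_k\sigma^2$ (the variance term), and by the \emph{mismatch} terms $w_k(\rho_{k+1}-\rho_k)\|[c(x_{k+1})]_+\|^2/2$ that arise because the penalty parameter changes between iterations. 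The key algebraic facts I would use are $\gamma_k=\Theta(\rho_k^{-1}(k+4))=\Theta((k+4)^{-1/2})$, hence $w_k\gamma_k=\Theta(k\cdot k^{-1/2})=\Theta(k^{1/2})$ if $w_k=\Theta(k)$, so $\sum_{k\le K}w_k\gamma_k\sigma^2=\Theta(K^{3/2}\sigma^2)$; dividing by $W_K=\sum w_k=\Theta(K^2)$ gives the $K^{-1/2}$ rate, and the extra $\log K$ in \eqref{thm1-ineq2} comes precisely from the harmonic-type sum $\sum_{k\le K} k^{-1}$ that appears when one instead uses $w_k\propto k$ against $\gamma_k\propto k^{-1/2}$ versus $\rho_k^{-1}\propto k^{-3/2}$ — I expect the $\sigma^2 L_{\nabla c^2}^{-1}k^{-1/2}\log k$ term to trace back to bounding $\sum_k w_k\gamma_k = \sum_k w_k \rho_k^{-1}(k+4)/(10 L_{\nabla c^2}) + (\text{lower order in }L_{\nabla f})$ and then normalizing.

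Next I would convert the guarantee on $\bbE[F_{\rho_K}(x_K)-F_{\rho_K}^*]$ (or the running average guarantee) into the three displayed bounds. For the upper bound \eqref{thm1-ineq2} on $\bbE[F(x_k)-F^*]$, I use $F(x_k)\le F_{\rho_k}(x_k)$ and $F_{\rho_k}^*\le F^*$ (since the penalty adds a nonnegative term and any feasible point of \eqref{prob1} has zero penalty), so $\bbE[F(x_k)-F^*]\le \bbE[F_{\rho_k}(x_k)-F_{\rho_k}^*]$, and then plug in the rate from the ASG analysis; the constant $\tC_1=25 D_\cX^2+10\Lambda^2 L_{\nabla c^2}^{-1}$ should appear as the coefficient absorbing the $\|z_1-x^*\|^2\le D_\cX^2$ initialization term together with a $\Lambda^2/L_{\nabla c^2}$ contribution that I expect comes from comparing against a shifted point or from the standard penalty-to-Lagrangian bound $F_{\rho}^*\ge F^* - \Lambda^2/(2\rho)$ rearranged. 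For the feasibility bound \eqref{thm1-ineq1}, the standard device is: for any optimal multiplier $\lambda^*$ with $\|\lambda^*\|=\Lambda$, convexity plus the KKT/Lagrangian inequality gives $F(x_k)+\langle\lambda^*,c(x_k)\rangle\ge F^*$, hence $F_{\rho_k}(x_k)-F^* \ge \frac{\rho_k}{2}\|[c(x_k)]_+\|^2 - \Lambda\|[c(x_k)]_+\|$; combining with the \emph{upper} bound $\bbE[F_{\rho_k}(x_k)-F^*]\le(\text{rate})$ and solving the resulting quadratic inequality in $\|[c(x_k)]_+\|$ (using $\sqrt{a+b}\le\sqrt a+\sqrt b$ and $\rho_k=(k+4)^{3/2}$) yields \eqref{thm1-ineq1}; the four terms in \eqref{thm1-ineq1} correspond to $\sqrt{\text{rate}/\rho_k}$ applied termwise — e.g. $\sqrt{(\tC_2/2)k^{-1/2}\cdot\text{stuff}/k^{3/2}}$ type arithmetic giving $k^{-3/4}$, $k^{-1}$, $k^{-3/2}$, $k^{-7/4}$ — plus the $\Lambda/\rho_k=\Lambda k^{-3/2}$ term from the linear part of the quadratic. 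The lower bound \eqref{thm1-ineq3} follows by plugging the feasibility bound \eqref{thm1-ineq1} back into $F(x_k)-F^*\ge -\langle\lambda^*,[c(x_k)]_+\rangle\ge -\Lambda\|[c(x_k)]_+\|$.

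The main obstacle I anticipate is controlling the \emph{penalty-mismatch} terms in the telescoping sum: because $\rho_k$ increases, when one telescopes $\sum w_k(F_{\rho_k}(x_{k+1})-F_{\rho_k}(x^*))$ the natural quantity that appears is $\sum w_k(F_{\rho_k}(x_{k+1}) - F_{\rho_{k+1}}(x_{k+1}))=-\sum w_k(\rho_{k+1}-\rho_k)\|[c(x_{k+1})]_+\|^2/2$, which has a \emph{favorable} sign, but one must still verify it does not destroy the telescoping structure and that the weights $w_k$ (which must be compatible with $\beta_k$ via the recursion $w_k\beta_k^{-1}$-type identities) are chosen consistently with both the smoothness constants $L_{\nabla f}+\rho_k L_{\nabla c^2}$ varying in $k$ \emph{and} $\rho_k$ varying in $k$ — this coupling is what forces the specific exponents $3/2$, the shift by $4$, and the constant $1/5$ in \eqref{def-para1}. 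I would handle this by following the bookkeeping already set up in the proof of Theorem \ref{thm:composite_fixed} and isolating exactly where the constant $\rho$ was used, then checking each such step survives with $\rho\rightsquigarrow\rho_k$ at the cost of the extra $\log k$ factors; the $\log k$ is unavoidable and enters through $\sum_{k\le K}(k+4)^{-1}=\Theta(\log K)$ when normalizing the variance contribution by $W_K$. A secondary technical point is that Assumption \ref{a2} only bounds $\|\nabla\tilde f(x,\xi)\|\le G$ and the variance by $\sigma^2$; the $G$ enters only in $\tC_2$ via the feasibility argument (bounding $\|\nabla c(y_k)[c(y_k)]_+\|$ and the cross terms that show up when estimating $\|[c(x_k)]_+\|$ before the quadratic-inequality step), whereas $\sigma$ enters the optimality-gap rates — keeping these two roles separate in the constants is the kind of routine-but-delicate accounting I would do last.
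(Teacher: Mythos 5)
Your overall roadmap coincides with the paper's: telescope the ASG one-step inequality over the penalty subproblems, bound $\bbE[F(x_k)-F^*]$ from above by $\bbE[F_{\rho_k}(x_k)-F_{\rho_k}^*]$, obtain feasibility by solving the quadratic inequality $\tfrac{\rho_k}{2}t^2-\Lambda t\le F_{\rho_k}(x_k)-F_{\rho_k}^*$, and get the lower bound from $F(x_k)-F^*\ge-\Lambda\|[c(x_k)]_+\|$; your reading of where $D_\cX^2$ and $\Lambda^2/L_{\nabla c^2}$ enter $\tC_1$ is also essentially right. However, there is a genuine gap at exactly the point where the dynamic-penalty proof differs from the constant-penalty one: the penalty-mismatch term. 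You assert that $F_{\rho_k}(x_{k+1})-F_{\rho_{k+1}}(x_{k+1})=-\tfrac{\rho_{k+1}-\rho_k}{2}\|[c(x_{k+1})]_+\|^2$ has a favorable sign, but the direction of the recursion forces the opposite conclusion: the step-$k$ inequality controls $\beta_k\gamma_k(F_{\rho_k}(x_{k+1})-F^*)$, while step $k+1$ needs $(\beta_{k+1}-1)\gamma_{k+1}(F_{\rho_{k+1}}(x_{k+1})-F^*)$, and $F_{\rho_{k+1}}(x_{k+1})-F^* = F_{\rho_k}(x_{k+1})-F^*+\tfrac{\rho_{k+1}-\rho_k}{2}\|[c(x_{k+1})]_+\|^2$ is \emph{larger}. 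The mismatch is therefore an error that must be paid, not a bonus, and the constant-$\rho$ bookkeeping contains no step you can simply ``check survives'' under $\rho\rightsquigarrow\rho_k$ — a new argument is required, which your proposal does not supply.

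The paper's resolution (Lemma~\ref{l-sum}) is to bound $\|[c(x_{k+1})]_+\|^2$ by the feasibility estimate \eqref{feasibility}, $\|[c(x_{k+1})]_+\|\le 2\Lambda/\rho_{k+1}+\sqrt{2(F_{\rho_{k+1}}(x_{k+1})-F_{\rho_{k+1}}^*)/\rho_{k+1}}$, so that the mismatch splits into a piece proportional to $F_{\rho_{k+1}}(x_{k+1})-F_{\rho_{k+1}}^*$ itself — absorbed into its own coefficient, which is why the parameters must satisfy $(\beta_{k+1}-1)\gamma_{k+1}\le\beta_k\gamma_k\bigl(1-2(\rho_{k+1}-\rho_k)/\rho_{k+1}\bigr)$ rather than the plain monotonicity you invoke — plus an accumulating error $4\Lambda^2\sum_i\beta_i\gamma_i(\rho_{i+1}-\rho_i)/\rho_{i+1}^2=O(\Lambda^2/L_{\nabla c^2})$. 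Together with the correction $F_{\rho_k}^*-F^*\ge-\Lambda^2/(2\rho_k)$ that you did identify, this is what produces the $10\Lambda^2/L_{\nabla c^2}$ in $\tC_1$. Two smaller inaccuracies: the $\log k$ in \eqref{thm1-ineq2} comes solely from the variance sum $\sigma^2\sum_i\gamma_i^2\propto\sum_i(i+4)^{-1}$ (the mismatch sums are $O(1)$ and contribute no logarithm), and $G$ enters $\tC_2$ because the \emph{deterministic} feasibility bound must control the noise term $\gamma_i\langle\delta_i,x^*-z_{i+1}\rangle$ pathwise via $\|\delta_i\|\le 2G$ — a bound on $\nabla\tilde f(\cdot,\xi)$, not on $\nabla c(y_k)[c(y_k)]_+$, which is deterministic and contributes nothing to $\delta_i$.
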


\begin{rem} \label{remark2}
One can observe from Theorem \ref{thm:composite} that Algorithm \ref{alg1} with dynamic penalty parameters $\rho_k=(k+4)^{3/2}$ achieves a nearly optimal FO complexity of $\widetilde{\mathcal{O}}(\epsilon^{-2})$ for finding an $\epsilon$-SFSO solution of problem \eqref{prob1} as defined in \eqref{eps-opt-soln}. Although this complexity is slightly worse than that achieved with constant penalty parameters, Algorithm~\ref{alg1} with dynamic penalties generally performs better in practice, as observed in our numerical experiments.
\end{rem}

\section{A stochastic first-order method for problem \eqref{prob1}
with a finite sample space}\label{sec:finite-sum}

In this section, we consider problem \eqref{prob1} in which $\xi$ is a uniformly distributed random variable with a finite sample space $\Xi=\{1,2,\ldots,s\}$. In this case, problem~\eqref{prob1} reduces to a constrained finite-sum optimization problem given in \eqref{prob2}.

When $c=0$, problem \eqref{prob2} reduces to an unconstrained finite-sum optimization problem  
\beq \label{fun-f}
\min_{x} \big\{\barf(x) +\psi(x)\big\}, \ \ \text{where} \ \  \barf(x):=\frac{1}{s}\sum_{i=1}^sf_i(x).
\eeq 
This problem has been extensively studied in the literature (e.g., see \cite{hendrikx2021optimal,lan2019unified,schmidt2017minimizing,song2020variance,zhou2019lower}).
Notably, \cite[Algorithm 1]{lan2019unified} proposes a variance-reduced accelerated gradient method for solving~\eqref{fun-f}. This method can be viewed as a restarted accelerated stochastic gradient scheme consisting of two nested loops: the outer loop updates a reference point $\tilde x$ and  the starting points $x_0$, $z_0$ for the inner loop, while the inner loop performs a sequence of accelerated stochastic gradient steps. At each inner iteration, a stochastic variance-reduced  gradient (SVRG) of $\barf$ is computed by using the full gradient $\nabla \barf(\tilde x)$ at the reference point $\tilde x$, along with the sampled gradients $\nabla f_i(\tilde x)$ and $\nabla f_i(y)$, where $y$ is updated at each inner iteration, and  $i\in \{1, 2, \dots, s\}$ is drawn according to a specified probability distribution.

We propose a stochastic first-order method for solving problem \eqref{prob2} by applying a \textit{modified} variance-reduced accelerated gradient scheme \textit{only once} to a sequence of quadratic penalty problems: 
\beq\label{def-Frho2}
F_{\rho_k}^*=\min_{x}\Big\{F_{\rho_k}(x):=\barf(x)+\psi(x)+\frac{\rho_k}{2}\|[c(x)]_+\|^2\Big\}
\eeq
for suitably chosen penalty parameters $\{\rho_k\} \subset [0,\infty)$. Our modified scheme closely follows one loop of \cite[Algorithm 1]{lan2019unified}, but with a key distinction: the two objective components $\barf(x)$  and $\rho_k\|[c(x)]_+\|^2/2$ are handled separately. Specifically, we approximate the gradient of $\barf(x)$  using a SVRG, while computing the gradient of $\rho_k\|[c(x)]_+\|^2/2$ exactly. The sum of these two components serves as the SVRG for $\barf(x)+\rho_k\|[c(x)]_+\|^2/2$ in our scheme.  The resulting method for problem \eqref{prob2} is presented in Algorithm \ref{alg2}.

\begin{algorithm}[H]
\caption{A variance reduced stochastic first-order method for problem \eqref{prob2}}\label{alg2}
\begin{algorithmic}[1]
\REQUIRE $x^0\in\cX$, $\{\alpha_k\}\subset(0, 1]$, $\{p_k\}\subset(0,1]$, $\{\gamma_k\}\subset (0,\infty)$, $\{T_k\}$, $\{\theta_k\}$, a probability distribution $\mathcal{Q}=\{q_1, q_2,\dots, q_s\}$, and $\{\rho_k\}\subset (0,\infty)$.
\STATE Set $\tilde x_1=x^0$ and $\tilde z_1=x^0$.
\FOR{$k=1,2,\dots$}
\STATE Set $x_0=\tilde x_{k}$, $z_0=\tilde z_{k}$,  and $\tilde g_{k} = \nabla \barf(\tilde x_{k})$. 
\FOR{$t=1, 2, \dots, T_k$}
\STATE Sample $i_t\in \{1, 2, \dots, s\}$ according to $\mathcal{Q}$.
\STATE $y_t=(1-\alpha_k-p_k)x_{t-1}+\alpha_kz_{t-1}+p_k\tilde x_{k}$.
\STATE $g_t=(\nabla f_{i_t}(y_t)-\nabla f_{i_t}(\tilde x_{k}))/(q_{i_t}s)+\tilde g_{k} +\rho_k  \nabla c(y_t)[c(y_t)]_+$.
\STATE $z_t=\argmin_{x\in\cX}\left\{\gamma_k(\langle g_t,x \rangle + \psi(x))+\|x-z_{t-1}\|^2/2\right\}$.
\STATE $x_t=(1-\alpha_k-p_k)x_{t-1}+\alpha_kz_t+p_k\tilde x_k$.
\ENDFOR
\STATE Set $\tilde z_{k+1}=z_{T_k}$ and $\tilde x_{k+1}=\sum_{t=1}^{T_k}(\theta_tx_t)/\sum_{i=1}^{T_k}\theta_t$.
\ENDFOR
\end{algorithmic}
\end{algorithm}

Before presenting convergence results for Algorithm \ref{alg2}, we introduce the following additional assumption for problem \eqref{prob2}.

\begin{assumption}\label{a3}
$f_i$ is $L_i$-smooth on $\cX$ for all $i=1,2,\dots,s$.
\end{assumption}

In view of this assumption and the definition of $\barf$ in \eqref{fun-f}, one can observe that $\barf$ is $L_{\nabla \barf}$-smooth, where
 \begin{equation}\label{Lam}
L_{\nabla \barf}=\sum_{i=1}^sL_i/s.
 \end{equation}

The next theorem presents convergence results for Algorithm \ref{alg2} with constant penalty parameters $\{\rho_k\}$, whose proof is deferred to Subsection \ref{sec:proof-finitesum}. 

\begin{thm}[{\bf constant penalty parameters}]\label{thm:finite-sum_fixed}
 Suppose that Assumptions \ref{a1} and \ref{a3} hold. Let $k_0=\lfloor \log_2 s \rfloor+1$, and $L_{\nabla c^2}$, $L_{\nabla \barf}$,  $D_{\mathcal{X}}$, $D_F$, $\Lambda$, and $L_i$'s  be  given  in \eqref{L_nablac},  \eqref{Lam}, and Assumptions \ref{a1} and \ref{a3}, respectively. Assume that $\tilde x_K$ is generated by Algorithm \ref{alg2} for some $K \geq \max\{k_0+1,2(k_0-3)\}$, with the parameters chosen as 
\begin{equation}\label{def-para4}
\begin{aligned}
    &  \alpha_k=\left\{
    \begin{array}{ll}
      \frac{1}{2}   & \text{if} \  \ 1\leq k\leq k_0, \\
       \frac{2}{k-k_0+4}  & \text{if} \  \ k>k_0,
    \end{array}\right.  \quad  p_k=\frac{1}{2}, \quad \rho_k = \rho, \quad \gamma_k=\frac{1}{3(L_{\nabla \barf}+\rho_k L_{\nabla c^2})\alpha_k},  \\
&       q_i=\frac{L_i}{\sum_{j=1}^sL_j}, \quad T_k=\left\{\begin{array}{ll}
       2^{k-1}  & \text{if} \  \ 1\leq k\leq k_0, \\
       2^{k_0-1}  & \text{if} \  \ k>k_0,
       \end{array}\right. 
    \quad \theta_t=\left\{
    \begin{array}{ll}
      \frac{\gamma_k}{\alpha_k}(\alpha_k+p_k)   & \text{if} \  \ 1\leq t\leq T_k-1, \\
       \frac{\gamma_k}{\alpha_k}  & \text{if} \  \ t=T_k
    \end{array}\right.  
\end{aligned}  
\end{equation}
for some $\rho>0$ specified below. 
Then the following statements hold.
\begin{itemize}
\item [(i)] 
Assume that $\rho=s^{2/3}K^{4/3}$.
Then we have
\begin{align}
&\|[c(\tilde x_K)]_+\|
\leq 8L_{\nabla \barf}^{1/2}D_\cX s^{-1/3}K^{-2/3}+16\tC_4^{1/2}s^{-1/2}K^{-1}+2\Lambda s^{-2/3}K^{-4/3}\nn\\
&\qquad\qquad\quad\ \ +16\tC_3^{1/2}s^{-5/6}K^{-5/3}, \label{thm4-ineq1} \\
&\bbE[\|[c(\tilde x_K)]_+\|]
\leq  16\tC_4^{1/2}s^{-1/2}K^{-1}+2\Lambda s^{-2/3}K^{-4/3}+16\tC_3^{1/2}s^{-5/6}K^{-5/3}, \label{thm4-ineq1-exp} \\
&\bbE [F(\tilde x_K)-F^*]
\leq 64\tC_4s^{-1/3}K^{-2/3}+64\tC_3s^{-1} K^{-2},  \label{thm4-ineq2} \\
&\bbE [F(\tilde x_K)-F^*]
\geq -16 \Lambda\tC_4^{1/2}s^{-1/2}K^{-1}-2\Lambda^2 s^{-2/3}K^{-4/3}-16 \Lambda\tC_3^{1/2}s^{-5/6}K^{-5/3},  \label{thm4-ineq3}
\end{align}
where
\begin{equation}\label{c1011}
    \tC_3=2D_F+3L_{\nabla \barf}D_\cX^2/2,\quad
     \tC_4=\|[c(x^0)]_+\|^2+3L_{\nabla c^2}D^2_\cX/2.
\end{equation}

\item[(ii)] Assume that $\rho= \sqrt{s}K$.
Then we have
\begin{align*}
&\|[c(\tilde x_K)]_+\|
\leq 8L_{\nabla \barf}^{1/2}D_\cX s^{-1/4} K^{-1/2}+\big(2\Lambda +16\tC_4^{1/2}\big)s^{-1/2}K^{-1}+16\tC_3^{1/2}s^{-3/4}K^{-3/2},  \\
&\bbE[\|[c(\tilde x_K)]_+\|]
\leq \big(2\Lambda +16\tC_4^{1/2}\big)s^{-1/2}K^{-1}+16\tC_3^{1/2}s^{-3/4}K^{-3/2}, \\
&\bbE [F(\tilde x_K)-F^*]
\leq 64\tC_4 s^{-1/2}K^{-1}+64\tC_3 s^{-1}K^{-2},   \\
&\bbE [F(\tilde x_K)-F^*]
\geq -\Lambda \big(2\Lambda +16\tC_4^{1/2}\big)s^{-1/2} K^{-1}-16\Lambda\tC_3^{1/2}s^{-3/4} K^{-3/2},   
\end{align*}
where $\tC_3$ and $\tC_4$ are given in \eqref{c1011}.
\end{itemize}
\end{thm}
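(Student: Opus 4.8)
## Proof Proposal for Theorem \ref{thm:finite-sum_fixed}

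The plan rests on the observation that, with constant penalty parameters $\rho_k\equiv\rho$, Algorithm~\ref{alg2} is, up to the separate handling of the two smooth terms, a single run of the variance-reduced accelerated gradient scheme of \cite[Algorithm~1]{lan2019unified} applied to the \emph{fixed} penalty problem $\min_x F_\rho(x)$ in \eqref{def-Frho2}, whose smooth part $\barf+\tfrac\rho2\|[c(\cdot)]_+\|^2$ is convex and, by \eqref{L_nablac} and \eqref{Lam}, $(L_{\nabla\barf}+\rho L_{\nabla c^2})$-smooth---the only twist being that the SVRG estimator $g_t$ reduces variance solely over the $f_i$-components, with the exact gradient $\rho\nabla c(y_t)[c(y_t)]_+$ added on. I would first establish the standard per-inner-step three-point inequality, decomposing $g_t=\nabla\barf(y_t)+\rho\nabla c(y_t)[c(y_t)]_++\eta_t$, where $\eta_t$ is the SVRG error with $\mathbb{E}[\eta_t\mid\mathcal F_{t-1}]=0$: the deterministic part $\nabla\barf(y_t)+\rho\nabla c(y_t)[c(y_t)]_+$ is handled by convexity plus $(L_{\nabla\barf}+\rho L_{\nabla c^2})$-smoothness together with the prox step for $z_t$ and the identity $\gamma_k\alpha_k(L_{\nabla\barf}+\rho L_{\nabla c^2})=1/3$ from \eqref{def-para4}, while the sampling rule $q_i=L_i/\sum_jL_j$ makes $\mathbb{E}[\|\eta_t\|^2\mid\mathcal F_{t-1}]$ telescope against the reference point $\tilde x_k$. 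Summing over $t=1,\dots,T_k$ with the weights $\theta_t$ of \eqref{def-para4} gives, for every $x\in\cX$, a one-stage recursion controlling $\bigl(\sum_t\theta_t\bigr)\bigl(\mathbb{E}[F_\rho(\tilde x_{k+1})-F_\rho(x)]\bigr)$ and $\mathbb{E}\|\tilde z_{k+1}-x\|^2$ in terms of $\mathbb{E}\|\tilde z_k-x\|^2$ and $\mathbb{E}[F_\rho(\tilde x_k)-F_\rho(x)]$.

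I would then chain this recursion across the two regimes prescribed by \eqref{def-para4}. For $1\le k\le k_0=\lfloor\log_2 s\rfloor+1$, where $\alpha_k=\tfrac12$ and $T_k=2^{k-1}$ doubles, the recursion contracts geometrically, so the $O(\log s)$ warm-up stages (total cost $O(s\log s)$ first-order evaluations, counting the full gradients $\nabla\barf(\tilde x_k)$ and the $\Theta(2^k)$ inner samples) drive $\mathbb{E}[F_\rho(\tilde x_{k_0+1})-F_\rho^*]$ down to the variance-reduced level $O\bigl((L_{\nabla\barf}+\rho L_{\nabla c^2})D_\cX^2/s+D_F/s\bigr)$ and keep $\mathbb{E}\|\tilde z_{k_0+1}-x\|^2=O(D_\cX^2)$. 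For $k>k_0$, where $\alpha_k=2/(k-k_0+4)$ and $T_k=2^{k_0-1}=\Theta(s)$, the recursion enters the accelerated sublinear phase; telescoping over $k_0<k\le K$ and using $K\ge 2(k_0-3)$ so that $K-k_0=\Theta(K)$, the variance terms (being mean-square controlled) telescope to yield
\[
\mathbb{E}[F_\rho(\tilde x_K)-F_\rho^*]\ \le\ O\!\Bigl(\tfrac{(L_{\nabla\barf}+\rho L_{\nabla c^2})D_\cX^2}{sK^2}\Bigr)+O\!\Bigl(\tfrac{L_{\nabla\barf}D_\cX^2+D_F}{sK^2}\Bigr).
\]
Since $F_\rho(x^*)=F(x^*)=F^*$ for an optimal solution $x^*$ of \eqref{prob2}, and $F(x)\ge F^*-\langle\lambda^*,[c(x)]_+\rangle\ge F^*-\Lambda\|[c(x)]_+\|$ for $\lambda^*\in\boldsymbol{\Lambda^*}$ with $\|\lambda^*\|=\Lambda$ (Assumption~\ref{a1}(iii)), substituting $\rho=s^{2/3}K^{4/3}$ produces \eqref{thm4-ineq2} with the constants $\tC_3,\tC_4$ of \eqref{c1011}; the expected-feasibility bound \eqref{thm4-ineq1-exp} follows by writing $\mathbb{E}[F_\rho(\tilde x_K)]-F^*\ge\tfrac\rho2\mathbb{E}[(\|[c(\tilde x_K)]_+\|-\Lambda/\rho)^2]-\Lambda^2/(2\rho)$ and solving the resulting quadratic in $t=\mathbb{E}\|[c(\tilde x_K)]_+\|$; and the lower bound \eqref{thm4-ineq3} is just $-\Lambda$ times \eqref{thm4-ineq1-exp}. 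Part~(ii) is identical with $\rho=\sqrt sK$ in place of $s^{2/3}K^{4/3}$.

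The main obstacle is the \emph{deterministic} feasibility bound \eqref{thm4-ineq1}, since everything above is in expectation while $\|[c(\tilde x_K)]_+\|$ must be bounded pathwise. The key is that the penalty gradient is computed \emph{exactly}, so the only stochasticity sits in $\eta_t$, which---because $f_i$ is $L_i$-smooth on the bounded set $\cX$ and $1/(q_{i_t}s)=L_{\nabla\barf}/L_{i_t}$---satisfies $\|\eta_t\|\le 2L_{\nabla\barf}D_\cX$ \emph{surely}. Re-running the chaining of the previous paragraph but bounding the fluctuation terms $\sum_t\theta_t\langle\eta_t,z_{t-1}-x\rangle$ pathwise by $2L_{\nabla\barf}D_\cX^2\sum_t\theta_t$ instead of using their zero conditional mean yields a \emph{pathwise} bound on $F_\rho(\tilde x_K)-F_\rho(x)$ of the same form as in the display above but with an extra, non-decaying $\Theta(L_{\nabla\barf}D_\cX^2)$ term; this extra term is useless for the objective estimate \eqref{thm4-ineq2} but, after taking $x=x^*$, completing the square, and dividing by $\rho$, contributes only $O(L_{\nabla\barf}^{1/2}D_\cX/\sqrt\rho)$ to $\|[c(\tilde x_K)]_+\|$, which equals $8L_{\nabla\barf}^{1/2}D_\cX s^{-1/3}K^{-2/3}$ when $\rho=s^{2/3}K^{4/3}$ (resp.\ $8L_{\nabla\barf}^{1/2}D_\cX s^{-1/4}K^{-1/2}$ when $\rho=\sqrt sK$), while the remaining contributions reproduce exactly the terms of \eqref{thm4-ineq1-exp}. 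I expect the genuinely delicate points to be (a) verifying that this pathwise treatment of the noise still leaves a bound that \emph{decays} once divided by the large $\rho$---rather than merely an $O(1)$ estimate---and (b) carrying the geometric warm-up contraction cleanly through the regime change at $k=k_0$ so as to obtain the precise numerical constants stated; the rest is routine, if lengthy, constant chasing followed by substitution of the two choices of $\rho$.
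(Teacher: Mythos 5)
Your proposal is correct and follows essentially the same route as the paper: the same one-stage recursion with the SVRG error $\delta_t$ handled in expectation (via its zero conditional mean and variance bound) for \eqref{thm4-ineq1-exp}--\eqref{thm4-ineq3}, and handled pathwise via the sure bound $\|\delta_t\|\le 2L_{\nabla \barf}D_\cX$ (from $q_i=L_i/\sum_jL_j$ and the boundedness of $\cX$) to get a deterministic bound on $F_\rho(\tilde x_K)-F_\rho^*$ with a non-decaying $\Theta(L_{\nabla \barf}D_\cX^2)$ term that, after the quadratic-penalty completion of the square, contributes exactly the $O(L_{\nabla \barf}^{1/2}D_\cX\rho^{-1/2})$ term in \eqref{thm4-ineq1}. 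The only cosmetic difference is that the paper telescopes a single weighted recursion $\cL_k\ge\mathcal{R}_{k+1}$ across both phases rather than treating the warm-up and accelerated phases as two separate contractions.
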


\begin{rem}\label{remark3} 
(i) The choice of $\rho=s^{2/3}K^{4/3}$ is to ensure that Algorithm \ref{alg2} achieves the best convergence rate for both $\|[c(\tilde x_K)]_+\|$ and $|\bbE[F(\tilde x_K)-F^*]|$ simultaneously, thereby attaining the lowest complexity for producing an $\epsilon$-SFSO among all possible choices of $\rho$. Indeed, as observed from Lemma~\ref{l-exactbound3} and the proof of Theorem~\ref{thm:finite-sum_fixed} (see \eqref{l9-expbound} and \eqref{Thm3-detercons}), we have
\[
|\bbE[F(\tilde x_K)-F^*]|=\mathcal{O}(\rho s^{-1}K^{-2}), \qquad \|[c(\tilde x_K)]_+\|=\mathcal{O}(\rho^{-1/2}),
\]
which implies that both $\bbE\|[c(\tilde{x}_K)]_+\|]$ and $|\mathbb{E}[F(\tilde{x}_K) - F^*]|$ attain the best convergence rate $\mathcal{O}(s^{-1/3}K^{-2/3})$ when $\rho = s^{2/3}K^{4/3}$. 
On the other hand, the choice of $\rho=\sqrt{s}K$ is to ensure that Algorithm \ref{alg2} achieves the best convergence rate for both $\bbE[\|[c(\tilde x_K)]_+\|]$ and $|\bbE[F(\tilde x_K)-F^*]|$ simultaneously, thereby attaining the lowest complexity for producing an $\epsilon$-EFSO among all possible choices of $\rho$. Indeed, as observed from Lemma~\ref{l-exactbound3} and the proof of Theorem~\ref{thm:finite-sum_fixed} (see \eqref{l9-expbound} and \eqref{Thm3-expcons}), we have
\[
|\bbE[F(\tilde x_K)-F^*]|=\mathcal{O}(\rho s^{-1}K^{-2}), \qquad \bbE[\|[c(\tilde x_K)]_+\|]=\mathcal{O}(\rho^{-1}),
\]
which implies that both $\bbE[\|[c(\tilde{x}_K)]_+\|]$ and $|\mathbb{E}[F(\tilde{x}_K) - F^*]|$ attain the best convergence rate $\mathcal{O}(s^{-1/2}K^{-1})$ when $\rho=\sqrt{s}K$.

(ii) Assume that $m = \mathcal{O}(1)$. Notice from \eqref{def-para4} that $T_k=\cO(s)$, and thus the number of gradient evaluations per outer iteration of Algorithm \ref{alg2} with the parameters specified in Theorem \ref{thm:finite-sum_fixed} is $\cO(s)$. Given $k_0=\lfloor \log_2 s \rfloor+1$ and $K \geq \max\{k_0+1,2(k_0-3)\}$, it follows from Theorem \ref{thm:finite-sum_fixed} that Algorithm \ref{alg2} with constant penalty parameter $\rho_k \equiv s^{2/3}K^{4/3}$ achieves an FO complexity of $\cO(s\log s+\sqrt{s}\epsilon^{-3/2})$ for computing an $\epsilon$-SFSO solution and an $\epsilon$-EFSO solution of \eqref{prob2}, which significantly improves upon the complexity of Algorithm \ref{alg1} in terms of the dependence of $\epsilon$. On the other hand, Algorithm \ref{alg2} with constant penalty parameter $\rho_k \equiv\sqrt{s} K$ achieves an FO complexity of $\cO(s\log s+\sqrt{s}\epsilon^{-2})$ for computing an $\epsilon$-SFSO solution, and an FO complexity of $\cO(s\log s+\sqrt{s}\epsilon^{-1})$ for finding an $\epsilon$-EFSO solution of \eqref{prob2}. Therefore, the choice $\rho_k \equiv s^{2/3}K^{4/3}$ leads to better complexity for obtaining an $\epsilon$-SFSO solution, while $\rho_k \equiv \sqrt{s}K$ yields better complexity for computing an $\epsilon$-EFSO solution, which is a weaker notion of approximate optimality than an $\epsilon$-SFSO solution. In addition, the complexity of $\mathcal{O}(s \log s + \sqrt{s} \epsilon^{-1})$ achieved by Algorithm~\ref{alg2} with $\rho_k \equiv \sqrt{s}K$ for finding an $\epsilon$-EFSO solution is significantly lower than the complexity of $\mathcal{O}(s \epsilon^{-1})$ or $\widetilde{\mathcal{O}}(s \epsilon^{-1})$ achieved by (nearly) optimal first-order methods \cite{kovalev2022first, lu2023iteration, xu2021iteration} for finding a deterministic $\epsilon$-optimal solution. 

(iii) In the special case $s=1$, Algorithm~\ref{alg2} with $\rho_k \equiv K$ reduces to a \textit{single-loop first-order penalty method} with a constant  penalty parameter for solving deterministic constrained convex optimization problems, while achieving an optimal FO complexity of $\mathcal{O}(\epsilon^{-1})$.
\end{rem}

While Algorithm \ref{alg2} with constant penalty parameters achieves strong complexity guarantees, the pre-specified penalty parameters may be overly large in practice. To enhance its practical performance, we next consider Algorithm \ref{alg2} with dynamic penalty parameters $\{\rho_k\}$ and present its convergence results, with the proofs deferred to Subsection \ref{sec:proof-finitesum}.

\begin{thm}[{\bf dynamic penalty parameters}]\label{thm:finite-sum}
 Suppose that Assumptions \ref{a1} and \ref{a3} hold. Let $L_{\nabla c^2}$, 
$L_{\nabla \barf}$,  $D_{\mathcal{X}}$, $D_F$, $\Lambda$, and $L_i$'s  be  given  in \eqref{L_nablac},  \eqref{Lam}, and Assumptions \ref{a1} and \ref{a3}, respectively. Assume that $\{x_k\}$ is generated by Algorithm \ref{alg2} with the parameters chosen as
\begin{equation}\label{def-para3}
\begin{aligned}
    & \theta_t=\left\{
    \begin{array}{ll}
      \frac{\gamma_k}{\alpha_k}(\alpha_k+p_k)   & \text{if} \  \ 1\leq t\leq T_k-1, \\
       \frac{\gamma_k}{\alpha_k}  & \text{if} \  \ t=T_k,
    \end{array}\right. \qquad
    \alpha_k=\left\{
    \begin{array}{ll}
      \frac{6}{7}   & \text{if} \  \ 1\leq k\leq k_0, \\
       \frac{6}{k-k_0+7}  & \text{if} \  \ k>k_0,
    \end{array}\right.  \quad   \\
&      p_k=\frac{1}{7}, \quad \gamma_k=\frac{1}{8(L_{\nabla \barf}+\rho_k L_{\nabla c^2})\alpha_k},  \quad 
    q_i=\frac{L_i}{\sum_{j=1}^sL_j},
\end{aligned} 
\end{equation}
and $k_0$, $T_k$, $\rho_k$ specified below. Then the following statements hold.
\begin{itemize}
\item [(i)] 
Assume that 
\begin{equation}\label{rho1}
\begin{aligned}
k_0&=\lfloor (4/3)\log_2 s \rfloor+1,\\
    T_k&=\left\{\begin{array}{ll}
       \lceil2^{3(k-1)/4}\rceil  & \text{if} \  \ 1\leq k\leq k_0, \\
       T_{k_0}  & \text{if} \  \ k>k_0,
       \end{array}\right. 
    \quad
    \rho_k=\left\{\begin{array}{ll}
       2^{k/2}  & \text{if} \  \ 1\leq k\leq k_0, \\
       3s^{2/3}(k-k_0+7)^{4/3}/32  & \text{if} \  \ k>k_0.
       \end{array}\right.
\end{aligned}
\end{equation}
 Then for all $k\geq \max\{k_0+1,2(k_0-7)\}$, we have
\begin{align}
&\|[c(\tilde x_k)]_+\|\leq 117L_{\nabla \barf}^{1/2}D_{\cX} s^{-1/3}k^{-2/3}+10\big(\tC_6^{1/2}+ \tC_8^{1/2}\big)s^{-1/3}k^{-1}\nn\\
&\qquad\qquad\quad\,\,+\big(54\Lambda+12\tC_9^{1/2} \big) s^{-2/3}k^{-4/3}+15\big(\tC_5^{1/2}+\tC_7^{1/2}\big)s^{-2/3} k^{-5/3}, \label{thm3-ineq1}\\
&\bbE [\|[c(\tilde x_k)]_+\|]\leq 10\tC_6^{1/2}s^{-1/2} k^{-1}+54\Lambda s^{-2/3} k^{-4/3}+ 15\tC_5^{1/2} s^{-5/6}k^{-5/3}, \label{thm3-ineq1-exp} \\
&\bbE [F(\tilde x_k)-F^*]\leq 2\tC_6 s^{-1/3}k^{-2/3}+4\tC_5 s^{-1}k^{-2}, \label{thm3-ineq2} \\
&\bbE [F(\tilde x_k)-F^*]\geq -\Lambda\big(10\tC_6^{1/2}s^{-1/2} k^{-1}+54\Lambda s^{-2/3} k^{-4/3}+ 15\tC_5^{1/2} s^{-5/6}k^{-5/3}\big), \label{thm3-ineq3}
\end{align}
where
\begin{equation}\label{c3-5}
\begin{aligned}
\tC_5&={4032L_{\nabla \barf}}\Big(\frac{D_F+\sqrt{2}\|[c(x^0)]_+\|^2/2}{40(L_{\nabla \barf}+\sqrt{2}L_{\nabla c^2})}+\frac{D_\cX^2}{2}+\frac{5\Lambda^2}{L_{\nabla c^2}}\Big),\\
\tC_6&=3L_{\nabla \barf}^{-1} L_{\nabla c^2}  \tC_5/32, \qquad \tC_7={8871L_{\nabla c^2}^{-1}L_{\nabla \barf}^2D_\cX^2},\\
\tC_8&=832L_{\nabla \barf}D_\cX^2, \qquad 
\tC_9=2688L_{\nabla c^2}^{-1}L_{\nabla \barf}^2D_\cX^2.
\end{aligned}
\end{equation}

\item[(ii)] Assume that
\begin{equation}\label{rho2}
\begin{aligned}
k_0=\lfloor \log_2 s \rfloor+1,\quad
    T_k=\left\{\begin{array}{ll}
       2^{k-1}  & \text{if} \  \ 1\leq k\leq k_0, \\
       T_{k_0}  & \text{if} \  \ k>k_0,
       \end{array}\right. 
    \quad
    \rho_k=\left\{\begin{array}{ll}
       2^{k/2}  & \text{if} \  \ 1\leq k\leq k_0, \\
       3\sqrt{s}(k-k_0+7)/16  & \text{if} \  \ k>k_0.
       \end{array}\right. 
\end{aligned}
\end{equation}
Then  for all $k\geq \max\{k_0+1,2(k_0-7)\}$, we have
\begin{align}
&\|[c(\tilde x_k)]_+\|\leq 60L_{\nabla \barf}^{1/2}D_{\cX}s^{-1/4}k^{-1/2}+ 150\Lambda s^{-1/2} k^{-1}\log k+ 7\big( 4\Lambda +\tC_{11}^{1/2}+\tC_{14}^{1/2}+\tC_{15}^{1/2}\big)s^{-1/4} k^{-1}\nn\\
&\qquad\qquad\quad\ \ +10\tC_{12}^{1/2}s^{-3/4}k^{-3/2}\log k+10\big(\tC_{10}^{1/2}+\tC_{13}^{1/2}\big)s^{-1/2}k^{-3/2}, \label{thm3-ineq4-1}\\
&\bbE[\|[c(\tilde x_k)]_+\|]
\leq 150\Lambda s^{-1/2} k^{-1}\log k+ \big(22\Lambda +7\tC_{11}^{1/2}\big)s^{-1/2} k^{-1}\nn\\
&\qquad\qquad\qquad\ \ +10\tC_{12}^{1/2}s^{-3/4}k^{-3/2}\log k+10\tC_{10}^{1/2}s^{-3/4}k^{-3/2},\label{thm3-ineq4} \\
&\bbE [F(\tilde x_k)-F^*]\leq 1046\Lambda^2s^{-1/2}k^{-1}\log k+ 2\tC_{11}s^{-1/2} k^{-1}+4\tC_{12}s^{-1}k^{-2}\log k+4\tC_{10}s^{-1} k^{-2},\label{thm3-ineq5}  \\
& \bbE [F(\tilde x_k)-F^*] \geq -\Lambda\Big(150\Lambda s^{-1/2} k^{-1}\log k+ \big(22\Lambda +7\tC_{11}^{1/2}\big)s^{-1/2} k^{-1}\nn\\
&\qquad\qquad\qquad\qquad\quad \ +10\tC_{12}^{1/2}s^{-3/4}k^{-3/2}\log k+10\tC_{10}^{1/2}s^{-3/4}k^{-3/2}\Big), \label{thm3-ineq6}
\end{align}
where
\begin{equation}\label{c6-9}
\begin{aligned}
\tC_{10}&={4032L_{\nabla \barf}}\Big(\frac{D_F+\sqrt{2}\|[c(x^0)]_+\|^2/2}{40(L_{\nabla \barf}+\sqrt{2}L_{\nabla c^2})}+\frac{D_\cX^2}{2}+\frac{5\Lambda^2k_0}{24L_{\nabla c^2}}\Big),\\
\tC_{11}&=3L_{\nabla \barf}^{-1} L_{\nabla c^2}  \tC_{10}/16, \qquad 
\tC_{12}=2823L_{\nabla c^2}^{-1}L_{\nabla \barf}\Lambda^2, \qquad \tC_{13}=2840L_{\nabla c^2}^{-1}L_{\nabla \barf}^2D_\cX^2, 
\\\qquad \tC_{14}&=533L_{\nabla \barf}D_\cX^2,\qquad
\tC_{15}= 896L_{\nabla c^2}^{-1}L_{\nabla \barf}^2D_\cX^2.
\end{aligned}
\end{equation}
\end{itemize}
\end{thm}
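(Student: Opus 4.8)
The plan is to treat both parts (i) and (ii) uniformly via a master one-outer-iteration estimate for Algorithm \ref{alg2}, then sum over outer iterations with the prescribed dynamic schedules. First I would establish a per-outer-iteration descent-type inequality for the penalty problem \eqref{def-Frho2}: fixing $k$, running the inner loop $t = 1,\dots,T_k$ with the stepsize $\gamma_k = 1/(8(L_{\nabla\barf}+\rho_kL_{\nabla c^2})\alpha_k)$, and invoking the analysis of one loop of \cite[Algorithm 1]{lan2019unified} adapted to the split treatment of $\barf$ (via SVRG) and $\rho_k\|[c(\cdot)]_+\|^2/2$ (computed exactly) — this should be exactly the content of a preceding lemma such as Lemma~\ref{l-exactbound3}. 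The key point is that the SVRG variance term telescopes against the progress term thanks to $q_i = L_i/\sum_jL_j$ and the choice of $p_k = 1/7$, $\alpha_k$, yielding a bound of the shape
\[
\bbE[F_{\rho_k}(\tilde x_{k+1}) - F_{\rho_k}(x)] \le \Gamma_k\big(\text{distance}_k^2 + (\text{variance budget})_k\big) - \Gamma_k'\,\text{distance}_{k+1}^2
\]
for every $x\in\cX$, where the weights depend on $\alpha_k,\gamma_k,T_k$. Since $\rho_k$ changes between outer iterations, I would absorb the mismatch $F_{\rho_{k+1}} - F_{\rho_k} = \tfrac12(\rho_{k+1}-\rho_k)\|[c(\cdot)]_+\|^2$ into the telescoping, which is why the schedule keeps $T_k$ constant and $\rho_k$ growing polynomially for $k > k_0$.

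Next I would run the telescoping sum across $k = 1,\dots,K-1$ to produce two master bounds: one on $\bbE[F_{\rho_{K-1}}(\tilde x_K) - F^*_{\rho_{K-1}}]$ (optimization error on the final penalty problem) and one on $\bbE[\|[c(\tilde x_K)]_+\|^2]$ or its deterministic analogue. The warm-up phase $1\le k\le k_0$ with geometrically growing $T_k = 2^{k-1}$ (resp.\ $\lceil 2^{3(k-1)/4}\rceil$) is what costs the $\cO(s\log s)$ term and drives the residual down to a size that the polynomial phase can then exploit; I would bound the warm-up contribution separately and show it contributes only lower-order terms in $k$. For $k > k_0$, with $\rho_k \asymp s^{2/3}(k-k_0)^{4/3}$ (part (i)) or $\rho_k \asymp \sqrt s\,(k-k_0)$ (part (ii)), the weighted sums $\sum \Gamma_k$, $\sum \Gamma_k/\rho_k$, etc., are evaluated by comparing with integrals, producing the $k^{-2/3}$, $k^{-1}$, $k^{-4/3}$, $k^{-5/3}$ (resp.\ $k^{-1/2}$, $k^{-1}\log k$, $k^{-3/2}\log k$, $k^{-3/2}$) rates; the $\log k$ factors in part (ii) arise precisely from $\sum_{k} 1/k$-type sums that are absent in part (i) because the exponent $4/3 > 1$ there. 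Then I would convert: the optimization bound on the penalty objective combined with the standard Lagrangian inequality $F^*_{\rho} \le F^*$ and $F(x) - F^* \ge -\Lambda\|[c(x)]_+\|$ (Assumption \ref{a1}(iii)) gives \eqref{thm3-ineq2}, \eqref{thm3-ineq3}, \eqref{thm3-ineq5}, \eqref{thm3-ineq6}; and the inequality
\[
\tfrac{\rho}{2}\|[c(x)]_+\|^2 \le F_{\rho}(x) - F^* + \Lambda\|[c(x)]_+\|
\]
(valid because $F_\rho(x) \ge F(x) + \tfrac{\rho}{2}\|[c(x)]_+\|^2$ and $F(x)\ge F^* - \Lambda\|[c(x)]_+\|$ by Lagrangian duality) yields, after solving the resulting quadratic in $\|[c(x)]_+\|$, the feasibility bounds \eqref{thm3-ineq1}, \eqref{thm3-ineq1-exp}, \eqref{thm3-ineq4-1}, \eqref{thm3-ineq4}. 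Taking expectations where appropriate (and using Jensen, $\bbE\|v\| \le (\bbE\|v\|^2)^{1/2}$, for the expected-violation statements) and plugging in the explicit constants \eqref{c3-5}, \eqref{c6-9} completes the argument.

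The main obstacle I anticipate is bookkeeping the $\rho_k$-dependent mismatch terms through the telescoping while keeping the constants explicit: because $\gamma_k \propto 1/\rho_k$, the inner-loop weights shrink as $\rho_k$ grows, and one must verify that the growth of $\rho_{k+1}-\rho_k$ is slow enough (hence the $4/3$ and linear schedules) that $\sum_k (\rho_{k+1}-\rho_k)\cdot(\text{weight}_k)\cdot\bbE\|[c(\tilde x_k)]_+\|^2$ remains summable at the claimed rate — this is a chicken-and-egg coupling between the feasibility bound and the optimization bound that I would resolve either by an induction on $k$ (bootstrapping a crude $\|[c(\tilde x_k)]_+\|^2 = \cO(1/\rho_k)$ bound) or by choosing the Lyapunov function to already incorporate the penalty term with the current $\rho_k$. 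A secondary nuisance is the requirement $k \ge \max\{k_0+1, 2(k_0-7)\}$, which is needed so that the polynomial-phase index $k - k_0$ dominates the warm-up constants; I would track this threshold carefully when converting sums to the stated closed-form rates.
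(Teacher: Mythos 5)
Your proposal follows essentially the same route as the paper: a one-loop variance-reduced estimate for each penalty subproblem (Lemmas~\ref{l-iter2} and \ref{l-exactbound2}), telescoping across outer iterations with the penalty-parameter mismatch $\tfrac12(\rho_{k+1}-\rho_k)\|[c(\tilde x_{k+1})]_+\|^2$ absorbed via the a priori bound $\|[c(x)]_+\|\leq 2\Lambda/\rho+\sqrt{2(F_\rho(x)-F_\rho^*)/\rho}$ of Lemma~\ref{l-penalty} (your ``bootstrapping a crude $\mathcal{O}(1/\rho_k)$ bound''), and final conversion through $F(x)-F^*\geq-\Lambda\|[c(x)]_+\|$ and Jensen's inequality. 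The ``slow enough growth of $\rho_{k+1}-\rho_k$'' condition you flag is exactly what the paper isolates as the weight inequality $\cL_k\big(1-2(\rho_{k+1}-\rho_k)/\rho_{k+1}\big)\geq\mathcal{R}_{k+1}$ in Lemma~\ref{LkRk}, so your plan matches the paper's argument in all essential respects.
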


\begin{rem}  \label{remark4}
(i) For similar reasons as discussed in Remark~\ref{remark3}(i), the choices of $\rho_k$ in \eqref{rho1} and \eqref{rho2} are to ensure that Algorithm~\ref{alg2} achieves the lowest complexity in finding an $\epsilon$-SFSO and an $\epsilon$-EFSO, respectively, among all possible choices of the sequence ${\rho_k}$.

(ii) Assume that $m = \mathcal{O}(1)$. Using similar arguments as in Remark~\ref{remark3}(ii), it follows from Theorem~\ref{thm:finite-sum} that Algorithm~\ref{alg2} with $(k_0, T_k, \rho_k)$ chosen as in~\eqref{rho1} achieves an FO complexity of $\cO(s\log s+\sqrt{s}\epsilon^{-3/2})$ for computing both an $\epsilon$-SFSO solution and an $\epsilon$-EFSO solution of problem~\eqref{prob2}. In contrast, choosing $(k_0, T_k, \rho_k)$ as in~\eqref{rho2} results in an FO complexity of $\cO(s\log s+\sqrt{s}\epsilon^{-2})$ for finding an $\epsilon$-SFSO solution, and $\widetilde\cO(s\log s+\sqrt{s}\epsilon^{-1})$ for finding an $\epsilon$-EFSO solution.

(iii) In the special case $s=1$, Algorithm~\ref{alg2} with dynamic penalty parameters chosen in \eqref{rho2} becomes  \textit{the first single-loop first-order penalty method} with dynamic penalty parameters for solving deterministic constrained convex optimization problems, while achieving a nearly optimal FO complexity of $\widetilde{\mathcal{O}}(\epsilon^{-1})$.

(iv) Although the theoretical complexity bounds of Algorithm~\ref{alg2} with dynamic penalty parameters are comparable to or slightly worse than those with constant penalty parameters, our numerical experiments demonstrate that the dynamic-penalty variant consistently outperforms its constant-penalty counterpart in practice.
\end{rem}

\section{Sample average approximation method for problem \eqref{prob1}} \label{sec:SAA}

In this section, we establish FO complexity results for SAA method in computing an $\epsilon$-SFSO or $\epsilon$-EFSO solution of problem \eqref{prob1} with an infinite sample space using (nearly) optimal deterministic first-order methods or Algorithm \ref{alg2} to solve the sample average problem, and compare these results with those achieved by Algorithm \ref{alg1}.

Throughout this section, we impose an additional assumption on problem \eqref{prob1} regarding the uniformly bounded variation of $\tf(x,\xi)$ on $\cX\times \Xi$, where $\cX=\dom\,\psi$ and $\Xi$ is the sample space of the random variable $\xi$.

\begin{assumption} \label{uniform-bound}
There exists a constant $V>0$ such that $|\tf(x,\xi)-\tf(x',\xi')| \leq V$ for all $(x,\xi), (x',\xi') \in \cX\times \Xi$. 
\end{assumption}

SAA method is a classical approach for solving various stochastic optimization problems by replacing the original problem with a sample-based deterministic approximation (e.g., see \cite{birge2011introduction,kleywegt2002sample,shapiro2003monte,shapiro2009lectures,shapiro2000rate}). Specifically, let ${\xi_1, \xi_2, \dots, \xi_s}$ be $s$ independent samples of the random variable $\xi$. The SAA method approximates problem \eqref{prob1} by the following sample average problem:
\beq\label{prob4}
\begin{aligned}
\bar F^*=\min_{x} &\  \, \Big\{\bar F(x):=\frac{1}{s}\sum_{i=1}^s \tf(x,\xi_i) +\psi(x)\Big\}\\
\mbox{s.t.}&\ \, c(x)\leq0.
\end{aligned}
\eeq
As shown below, if the sample size $s$ is chosen appropriately, an approximate optimal solution of \eqref{prob4} also serves as an approximate solution to \eqref{prob1}. Therefore, problem \eqref{prob1} can be approximately solved by solving \eqref{prob4}.

Let $\epsilon>0$ be given, and set the sample size to $s=\lceil 8\pi V^2 \epsilon^{-2}\rceil$.  Let $\tilde x$ be an approximate solution to problem \eqref{prob4} obtained by a deterministic or stochastic first-order method (to be discussed shortly), and suppose it satisfies $\bbE[|\bar F(\tilde x)-\bar F^*|]\leq\epsilon/2$. By Hoeffding's inequality \cite{hoeffding1963probability}, one has that 
\begin{equation*}
\mathbb{P}(|\bar F(x)-F(x)|\geq\tau)\leq 2 e^{-2s\tau^2/V^2} \qquad \forall x\in\cX, \tau>0.
\end{equation*}
It then follows that for all $x\in\cX$, 
\begin{align*}
\bbE[|\bar F(x)-F(x)|]&=\int_0^\infty \mathbb{P}(|\bar F(x)-F(x)|\geq \tau)d\tau \leq\int_0^\infty 2 e^{-2s\tau^2/V^2}d\tau\\
&=\frac{V}{\sqrt{2s}}\int_0^\infty 2e^{-u^2}du=V\sqrt{\frac{\pi}{2s}}.
\end{align*}
This implies $\bbE[|\bar F^*-F^*|] \leq V\sqrt{\pi/(2s)}$, where $F^*$ is the optimal value of problem \eqref{prob1}. 
Combining these bounds and using $s=\lceil 8\pi V^2 \epsilon^{-2}\rceil$, we obtain that
\begin{align} \label{approx-soln}
\bbE[|F(\tilde x)-F^*|]\leq\bbE[|\bar F(\tilde x)-F(\tilde x)|]+\bbE[|\bar F^*-F^*|]+\bbE[|\bar F(\tilde x)-\bar F^*|]
\leq 2V\sqrt{\pi/(2s)}+\epsilon/2 \leq \epsilon.
\end{align}
Hence, the expected optimality gap $\bbE[|F(\tilde x)-F^*|]$ is at most $\epsilon$.

Based on the above observation, we can use SAA method to compute an $\epsilon$-SFSO or $\epsilon$-EFSO solution of problem \eqref{prob1} by instead solving the sample average problem \eqref{prob4}. In particular, we consider three different approaches for solving \eqref{prob4}, each resulting in a distinct FO complexity for finding an $\epsilon$-SFSO or $\epsilon$-EFSO solution of \eqref{prob1}:

\begin{itemize}
\item {\bf Approach 1:} Apply Algorithm \ref{alg2} with parameters specified in \eqref{def-para3} and \eqref{rho2} to compute an $\epsilon/2$-EFSO solution $\tilde{x}$ of problem \eqref{prob4} such that
\[
\mathbb{E}[|\bar{F}(\tilde{x}) - \bar{F}^*|] \leq \epsilon/2 \quad \text{and} \quad \mathbb{E}[\|[c(\tilde{x})]_+\|] \leq \epsilon/2,
\]
which is also an $\epsilon$-EFSO solution of problem \eqref{prob1} due to  \eqref{approx-soln}. This approach achieves an FO complexity of $\widetilde{\mathcal{O}}(s\log s +\sqrt{s}\epsilon^{-1}) = \widetilde{\mathcal{O}}(\epsilon^{-2})$.
\item {\bf Approach 2:} Apply Algorithm \ref{alg2} with parameters specified in \eqref{def-para3} and \eqref{rho1} to compute an $\epsilon/2$-SFSO solution $\tilde{x}$ of problem \eqref{prob4} such that
\[
 \bbE[|\bar F(\tilde x)-\bar F^*|] \leq \epsilon/2  \quad \text{and} \quad \|[c(\tilde x)]_+\| \leq \epsilon/2,
\] 
which is also an $\epsilon$-SFSO solution of problem \eqref{prob1} due to  \eqref{approx-soln}. This approach achieves an FO complexity of $\mathcal{O}(s\log s+\sqrt{s}\epsilon^{-3/2}) = \mathcal{O}(\epsilon^{-5/2})$. 
\item {\bf Approach 3:}  Apply (nearly) optimal deterministic first-order methods (e.g., \cite[Algorithm 3]{kovalev2022first}, \cite[Algorithm 2]{lu2023iteration}, and \cite[Algorithm 1]{xu2021iteration}) to compute an $\epsilon/2$-optimal solution $\tilde{x}$ of problem \eqref{prob4} such that
\[
|\bar{F}(\tilde{x}) - \bar{F}^*| \leq \epsilon/2 \quad \text{and} \quad \|[c(\tilde{x})]_+\| \leq \epsilon/2.
\]
which is also an $\epsilon$-SFSO solution of problem \eqref{prob1} due to  \eqref{approx-soln}. This approach 
achieve an FO complexity of $\mathcal{O}(\epsilon^{-3})$ or $\widetilde{\mathcal{O}}(\epsilon^{-3})$.\footnote{As these methods have iteration complexity $\mathcal{O}(\epsilon^{-1})$ or $\widetilde{\mathcal{O}}(\epsilon^{-1})$ and require computing gradients of all $\tilde{f}(\cdot, \xi_i)$ at each iteration, resulting in a total FO complexity of $\mathcal{O}(\epsilon^{-3})$ or $\widetilde{\mathcal{O}}(\epsilon^{-3})$.} 
\end{itemize}

Recall from Section \ref{sec:composite} that an $\epsilon$-SFSO solution of problem \eqref{prob1} can be computed by Algorithm \ref{alg1} with parameters specified in \eqref{def-para2} or \eqref{def-para1}, achieving an FO complexity of $\mathcal{O}(\epsilon^{-2})$ or $\widetilde{\mathcal{O}}(\epsilon^{-2})$. We now compare Algorithm \ref{alg1} with the SAA method under the three approaches for solving the sample average problem discussed above. Although both Algorithm \ref{alg1} and the SAA method using Approach 1 achieve comparable FO complexity, the former computes an $\epsilon$-SFSO solution, which is generally stronger than the $\epsilon$-EFSO solution obtained by the latter. Furthermore, while both Algorithm \ref{alg1} and the SAA method using Approaches 2 and 3 compute an $\epsilon$-SFSO solution, Algorithm \ref{alg1} achieves significantly better FO complexity. 

\section{Numerical results} \label{sec:results}

In this section, we conduct preliminary experiments to test the performance of our proposed methods (namely, Algorithms \ref{alg1} and \ref{alg2}). Both algorithms are coded in Matlab and all the computations are performed on a desktop with a 2.00 GHz Intel i9-14900F 24-core processor and 32 GB of RAM.

\subsection{Constrained stochastic logistic regression model}\label{experiment-sto}
In this subsection, we consider a constrained stochastic logistic regression model:
 \begin{equation}\label{logistic}
    \begin{aligned}
        \min_{w,b} \ & -\bbE\Bigg[\frac{1+Y}{2}\log\Big(\frac{1}{1+e^{-(w^{\top}X+b)}}\Big)+\frac{1-Y}{2}\log\Big(\frac{e^{-(w^{\top}X+b)}}{1+e^{-(w^{\top}X+b)}}\Big)\Bigg]+\lambda\|w\|_1 + \delta_{[-1,1]^{n+1}}(w,b)\\
     \mbox{s.t.} \ & -\tilde y_i(w^{\top}\tilde x_i+b) \leq 0, \quad  i=1,2,\dots,m,  
    \end{aligned}
 \end{equation}
 where $\delta_{[-1,1]^{n+1}}(\cdot)$ is the indicator function of the box set $[-1,1]^{n+1}$, $X\in \mathbb{R}^n$ is a standard normal random vector,  $Y\in\{-1,1\}$ is a binary random variable, and $\{(\tilde x_i,\tilde y_i)\}_{i=1}^m$ are $m$ core samples, which are generated as follows. Specifically, we first generate a ground-truth parameter pair $(\hat w, \hat b)$ with each entry independently sampled from the uniform distribution on $[-1, 1]$. Given a realization $X = x$, the conditional distribution of $Y$ follows the logistic model:
\[
P(Y=1|X=x)=\frac{1}{1+e^{-(\hat w^{\top}x+\hat b)}}.
\]
Using this model, we generate 10,000 independent samples of $(X, Y)$, with $X$ drawn from a standard normal distribution.  Among these, we select the $m$ samples $\{(\tilde x_i,\tilde y_i)\}_{i=1}^m$ that are closest to the decision boundary $\hat w^\top x + \hat b = 0$, and use them as the core samples.
To ensure feasibility of problem \eqref{logistic}, we reset the labels of the core samples as $\tilde y_i=\operatorname{sgn}(\hat w^\top \tilde x_i + \hat b)$, where $\operatorname{sgn}(\cdot)$ denotes the sign function. 

We generate problem \eqref{logistic} with $\lambda = 0.1$, $n = 100$, and $m = 50$ according to  the procedure described above. We then apply Algorithm~\ref{alg1} to solve \eqref{logistic} under both constant and dynamic penalty parameter settings. Specifically, we initialize the algorithm at $(w_0, b_0)$, with each component independently drawn from the uniform distribution on $[-1, 1]$. The remaining parameters of Algorithm~\ref{alg1} are set according to~\eqref{def-para2} with $K = 5{,}000$ for the constant penalty setting, and according to~\eqref{def-para1} for the dynamic penalty setting.

We run Algorithm~\ref{alg1} for 5,000 iterations and evaluate both the objective value and constraint violation of problem~\eqref{logistic} at each iteration. The objective value at a given point $(w, b)$ is approximated by averaging over the 10,000 previously generated samples of $(X, Y)$. The results are presented in Figure~\ref{fig:1}, which illustrates the convergence behavior of the algorithm under both constant and dynamic penalty parameter settings. As shown in the figure, the dynamic penalty setting leads to significantly faster convergence in the objective value, while the constant penalty setting yields slightly faster reduction in constraint violation.

\begin{figure}[htbp]  
    \centering
    \begin{subfigure}{0.45\textwidth}
        \centering
        \includegraphics[width=\linewidth]{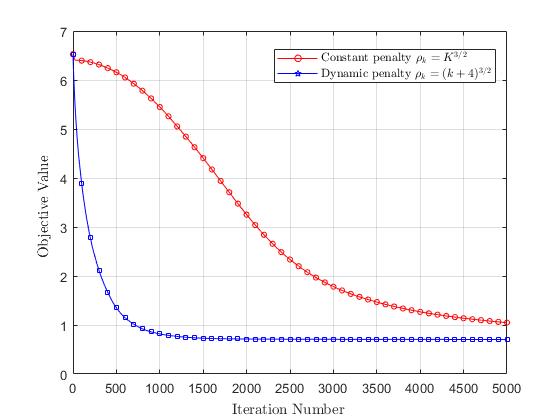} 
    \end{subfigure}
    \hfill
    \begin{subfigure}{0.45\textwidth}
        \centering
        \includegraphics[width=\linewidth]{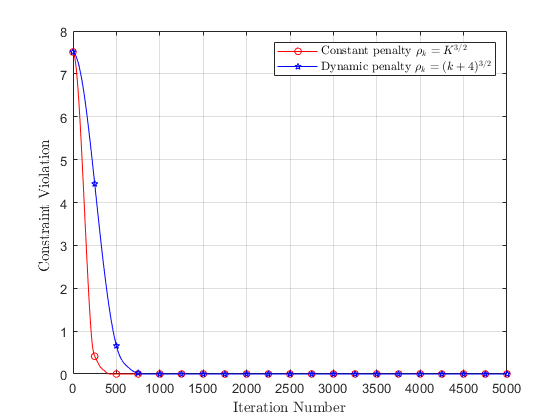} 
    \end{subfigure}
    \caption{Performance of Algorithm \ref{alg1} with constant and dynamic penalty parameters. }
     \label{fig:1}
\end{figure}

\subsection{Constrained finite-sum logistic regression model}

In this subsection, we consider a finite-sum logistic regression model:
 \begin{equation}\label{logistic-finitesum}
    \begin{aligned}
        \min_{w,b} \ & -\frac{1}{s}\sum_{i=1}^s\Bigg[\frac{1+y_i}{2}\log\Big(\frac{1}{1+e^{-(w^{\top}x_i+b)}}\Big)+\frac{1-y_i}{2}\log\Big(\frac{e^{-(w^{\top}x_i+b)}}{1+e^{-(w^{\top}x_i+b)}}\Big)\Bigg]+\lambda\|w\|_1 + \delta_{[-1,1]^{n+1}}(w,b) \\
     \mbox{s.t.}  \ & -\tilde y_i(w^{\top}\tilde x_i+b) \leq 0, \quad i=1,2,\dots,m, 
    \end{aligned}
 \end{equation}
where $\delta_{[-1,1]^{n+1}}(\cdot)$ is the indicator function of $[-1,1]^{n+1}$.  
The dataset $\{(x_i,y_i)\}_{i=1}^s$ consists of $s = 32,561$ samples with $n = 123$ features from LIBSVM a1a dataset \cite{chang2011libsvm}. The set $\{(\tilde x_i, \tilde y_i)\}_{i=1}^m$ contains $m$ core samples generated as follows: we first solve the unconstrained counterpart of \eqref{logistic-finitesum} (with $\lambda = 0$ and $m = 0$) to obtain a reference solution $(\hat w, \hat b)$. We then select the $m$ samples closest to the decision boundary $\hat w^\top x + \hat b = 0$ and use them as the core samples. To ensure feasibility of problem~\eqref{logistic-finitesum}, we reset the labels of the core samples as $\tilde y_i = \operatorname{sgn}(\hat w^\top \tilde x_i + \hat b)$.

We generate problem~\eqref{logistic-finitesum} with $\lambda = 0.03$, $m = 50$, and $s = 32,561$ using the procedure described above. We then apply Algorithm~\ref{alg2} to solve it under both constant and dynamic penalty parameter settings. The algorithm is initialized at $(\hat w_0, \hat b_0)$, where each component is independently drawn from the uniform distribution on $[-1, 1]$. The remaining parameters for Algorithm~\ref{alg2} are set according to Theorem~\ref{thm:finite-sum_fixed} with $K = 500$ for the constant penalty setting and Theorem~\ref{thm:finite-sum} for the dynamic penalty setting, respectively.

\begin{figure}[htbp] 
    \centering
    \begin{subfigure}{0.45\textwidth}
        \centering
        \includegraphics[width=\linewidth]{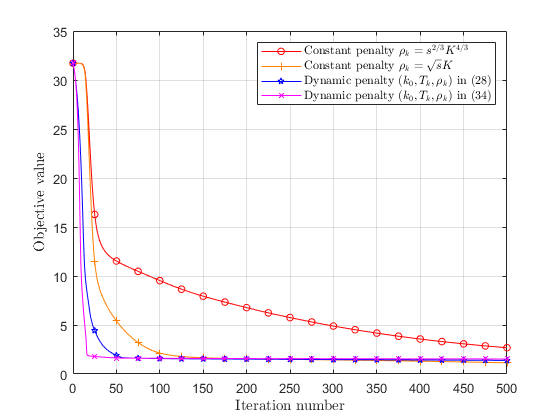} 
    \end{subfigure}
    \hfill
    \begin{subfigure}{0.45\textwidth}
        \centering
        \includegraphics[width=\linewidth]{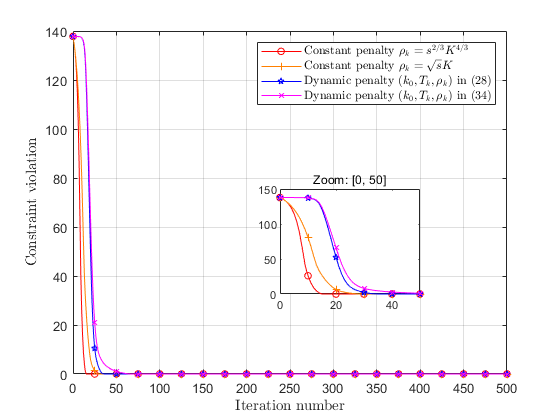} 
    \end{subfigure}
    \caption{Performance of Algorithm \ref{alg2} with constant and dynamic penalty parameters. }
    \label{fig:2}
\end{figure}

We run Algorithm~\ref{alg2} for $500$ iterations and evaluate the objective value and constraint violation at each iterate. The results are presented in Figure~\ref{fig:2}, which illustrates the convergence behavior of the algorithm under both constant and dynamic penalty settings. As observed, the dynamic penalty schemes generally lead to faster convergence in objective value compared to their constant counterparts, while incurring only a slightly slower convergence in constraint violation. In addition, both the constant penalty settings $\rho_k \equiv s^{2/3}K^{4/3}$ and its dynamic counterpart given in \eqref{rho1} outperform the constant penalty $\rho_k \equiv \sqrt{s}K$ and its corresponding dynamic version given in \eqref{rho2} in terms of constraint violation convergence, although they exhibit slower convergence in objective value.

We also compare Algorithm~\ref{alg2} with an optimal deterministic first-order method---the extra anchored gradient (EAG) method---for solving problem~\eqref{logistic-finitesum}. EAG, proposed in~\cite[Algorithm 3]{kovalev2022first}, is an optimal first-order method designed for monotone inclusion problems. Since problem~\eqref{logistic-finitesum} can be equivalently reformulated as a monotone inclusion problem, EAG is well-suited for solving it.  The parameters of Algorithm~\ref{alg2} are set according to Theorem~\ref{thm:finite-sum_fixed}(ii) with $K=500$ for the constant penalty setting, and Theorem~\ref{thm:finite-sum}(ii) for the dynamic penalty setting. The parameters for EAG are chosen as specified in~\cite[Theorem 2]{kovalev2022first}. Both Algorithm~\ref{alg2} and EAG are run for $500$ iterations. The results are presented in Figure~\ref{fig:2}, which illustrates the convergence behavior of the methods as the number of gradient evaluations increases. It can be observed that Algorithm~\ref{alg2}, under both constant and dynamic penalty settings, significantly outperforms EAG in terms of both objective value and constraint violation.

\begin{figure}[htbp] 
    \centering
    \begin{subfigure}{0.45\textwidth}
        \centering
        \includegraphics[width=\linewidth]{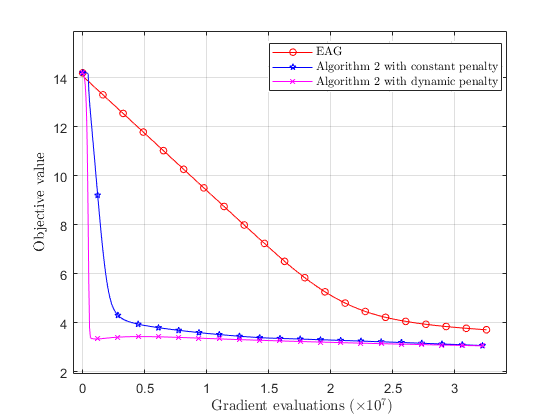} 
    \end{subfigure}
    \hfill
    \begin{subfigure}{0.45\textwidth}
        \centering
        \includegraphics[width=\linewidth]{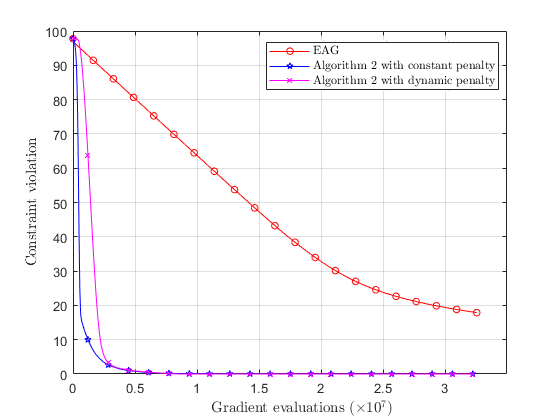} 
    \end{subfigure}
    \caption{Comparison of Algorithm \ref{alg2} with EAG}
    \label{fig:3}
\end{figure}

\section{Proof of the main results}\label{sec:proof}
In this section, we provide a proof of our main results presented in Sections \ref{sec:composite} and \ref{sec:finite-sum}, which are particularly Theorems  \ref{thm:composite_fixed}, \ref{thm:composite}, \ref{thm:finite-sum_fixed}, and  \ref{thm:finite-sum}. To proceed, we establish a lemma regarding a relationship between problems \eqref{prob1} and \eqref{def-Frho1}, which will be used in our subsequent analysis.

\begin{lemma}\label{l-penalty}
Consider the penalty problem associated with \eqref{prob1}:
\beq \label{penalty-prob}
F_\rho^* = \min_x\Big\{F_\rho(x) := F(x)+\frac{\rho}{2}\|[c(x)]_+\|^2\Big\},
\eeq
where $F$ is defined in  \eqref{prob1}, and $\rho>0$. Suppose that Assumption \ref{a1}(iii) holds, and let $\Lambda$ be given in Assumption \ref{a1}(iii).  Then for any $x\in\cX$ and $\rho>0$, one has
\begin{align}
 &-\Lambda\|[c(x)]_+\| \leq F(x)-F^*\leq F_{\rho}(x)-F_{\rho}^*, \label{opt-gap}\\
  & \|[c(x)]_+\| \leq \frac{2\Lambda}{\rho}+\sqrt{\frac{2(F_{\rho}(x)-F_{\rho}^*)}{\rho}}. \label{feasibility}
\end{align}

\end{lemma}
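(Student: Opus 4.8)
The plan is to exploit standard Lagrangian duality together with the nonnegativity and convexity properties already recorded in the excerpt. Fix $\lambda^* \in \boldsymbol{\Lambda^*}$ attaining $\|\lambda^*\| = \Lambda$, and recall that by weak duality $F^* = \min_{x\in\cX}\{F(x) + \langle \lambda^*, c(x)\rangle\}$ wherever the problem is convex; in particular $F(x) + \langle \lambda^*, c(x)\rangle \ge F^*$ for every $x \in \cX$. Since $\lambda^* \ge 0$ and $c(x) \le [c(x)]_+$ componentwise, we get $\langle \lambda^*, c(x)\rangle \le \langle \lambda^*, [c(x)]_+\rangle \le \|\lambda^*\|\,\|[c(x)]_+\| = \Lambda\|[c(x)]_+\|$ by Cauchy--Schwarz. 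Combining these yields $F(x) - F^* \ge -\Lambda\|[c(x)]_+\|$, which is the left inequality of \eqref{opt-gap}. The right inequality is immediate: $F_\rho(x) = F(x) + \tfrac{\rho}{2}\|[c(x)]_+\|^2 \ge F(x)$ for $\rho > 0$, and $F_\rho^* \le F(x^*) = F^*$ for any feasible optimum $x^*$ of \eqref{prob1} (feasibility makes the penalty term vanish), so $F_\rho(x) - F_\rho^* \ge F(x) - F^*$.

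For the feasibility bound \eqref{feasibility}, I would start from the chain $F(x^*) = F^* \le F(x) + \Lambda\|[c(x)]_+\|$ rearranged as $F(x) - F^* \ge -\Lambda\|[c(x)]_+\|$, and then bound $F_\rho^*$ from below. Writing $t := \|[c(x)]_+\|$, we have
\[
F_\rho(x) - F_\rho^* \;\ge\; F_\rho(x) - F_\rho(x) \text{ is trivial, so instead:}\qquad F_\rho^* \ge F^* - \frac{\Lambda^2}{2\rho},
\]
obtained by minimizing $y \mapsto F(y) + \tfrac{\rho}{2}\|[c(y)]_+\|^2 \ge F^* - \Lambda\|[c(y)]_+\| + \tfrac{\rho}{2}\|[c(y)]_+\|^2$ over the scalar $\|[c(y)]_+\| \ge 0$; the quadratic $\tfrac{\rho}{2}r^2 - \Lambda r$ has minimum $-\Lambda^2/(2\rho)$. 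Hence
\[
F_\rho(x) - F_\rho^* \;\ge\; \Big(F(x) + \frac{\rho}{2}t^2\Big) - \Big(F^* - \frac{\Lambda^2}{2\rho}\Big) \;\ge\; -\Lambda t + \frac{\rho}{2}t^2 + \frac{\Lambda^2}{2\rho} \;=\; \frac{\rho}{2}\Big(t - \frac{\Lambda}{\rho}\Big)^2.
\]
Solving this quadratic inequality in $t$ gives $t \le \Lambda/\rho + \sqrt{2(F_\rho(x) - F_\rho^*)/\rho}$, and then bounding $\Lambda/\rho \le 2\Lambda/\rho$ yields \eqref{feasibility} (the factor $2$ is slack, presumably kept for uniformity with other estimates in the paper).

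\textbf{Main obstacle.} The only delicate point is justifying the duality identity $F^* = \min_{x\in\cX}\{F(x) + \langle\lambda^*, c(x)\rangle\}$ and, relatedly, that $\langle\lambda^*, c(x)\rangle \ge F^* - F(x)$ holds for \emph{all} $x \in \cX$ rather than just feasible ones — this is exactly where Assumption \ref{a1}(iii) (existence of optimal multipliers) is used: $\lambda^*$ together with some primal optimum $x^*$ satisfies the saddle-point inequality $L(x^*, \lambda) \le L(x^*, \lambda^*) \le L(x, \lambda^*)$ for all $x\in\cX$, $\lambda\ge 0$, where $L(x,\lambda) = F(x) + \langle\lambda, c(x)\rangle$, and the right half of this saddle inequality combined with $L(x^*,\lambda^*) = F^*$ (complementary slackness) gives precisely what we need. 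Once this saddle-point fact is in hand, everything else is elementary: a Cauchy--Schwarz estimate, the monotonicity $c \le [c]_+$, and minimizing a one-dimensional quadratic. I would therefore open the proof by invoking the saddle-point characterization of optimal Lagrange multipliers for convex programs and then proceed through the steps above.
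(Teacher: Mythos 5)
Your proof of \eqref{opt-gap} is correct and essentially identical to the paper's: the right inequality from $F_\rho \ge F$ and $F_\rho^* \le F^*$, and the left inequality from the saddle-point property of $\lambda^*$ together with $\langle\lambda^*,c(x)\rangle \le \langle\lambda^*,[c(x)]_+\rangle \le \Lambda\|[c(x)]_+\|$.

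Your derivation of \eqref{feasibility}, however, contains a direction error that breaks the argument. You correctly establish the lower bound $F_\rho^* \ge F^* - \Lambda^2/(2\rho)$, but the displayed chain
$F_\rho(x)-F_\rho^* \ge \bigl(F(x)+\tfrac{\rho}{2}t^2\bigr)-\bigl(F^*-\tfrac{\Lambda^2}{2\rho}\bigr)$
is equivalent to $F_\rho^* \le F^* - \Lambda^2/(2\rho)$, i.e., the \emph{opposite} inequality: since $F_\rho^*$ appears with a minus sign, replacing it by a quantity it dominates makes the expression larger, not smaller. Consequently the bound $F_\rho(x)-F_\rho^* \ge \tfrac{\rho}{2}(t-\Lambda/\rho)^2$ is unjustified, and so is the sharper conclusion $t \le \Lambda/\rho + \sqrt{2(F_\rho(x)-F_\rho^*)/\rho}$ that you describe as having "slack." It does not: the correct argument (the paper's) uses the \emph{upper} bound $F_\rho^* \le F^*$ together with the left half of \eqref{opt-gap} to get $\tfrac{\rho}{2}t^2 - \Lambda t \le F_\rho(x) - F_\rho^*$, whose larger root is $t \le \bigl(\Lambda + \sqrt{\Lambda^2 + 2\rho(F_\rho(x)-F_\rho^*)}\bigr)/\rho$; the factor $2$ in $2\Lambda/\rho$ then comes from $\sqrt{a^2+b^2}\le |a|+|b|$ and is not removable by this route (at $F_\rho(x)=F_\rho^*$ the quadratic inequality only yields $t \le 2\Lambda/\rho$). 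The fix is short — drop the lower bound on $F_\rho^*$ entirely, write $-\Lambda t \le F(x)-F^* \le F_\rho(x) - \tfrac{\rho}{2}t^2 - F_\rho^*$, and solve the resulting quadratic in $t$ — but as written the key step of your argument for \eqref{feasibility} does not go through.
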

\begin{proof}
By \eqref{prob1} and \eqref{def-Frho1}, one has $F_{\rho}(x)\geq F(x)$ and $F_{\rho}^*\leq F^*$, which imply that  
\begin{equation}\label{l1-0}
    F(x)-F^*\leq F_{\rho}(x)-F_{\rho}^*.
\end{equation}
By the definition of $\Lambda$ in Assumption \ref{a1}(iii), there exists an optimal Lagrange multiplier $\lambda^*$ for problem  \eqref{prob1} satisfying $\Lambda=\|\lambda^*\|$. Hence, one has $\lambda^* \geq 0$ and
\begin{equation*}
    F^*=\min_z \{F(z)+\langle\lambda^*,c(z)\rangle\} \leq F(x)+\langle\lambda^*, c(x)\rangle.
\end{equation*}
Using these and $\Lambda=\|\lambda^*\|$, we obtain that
\[
F(x)-F^* \geq -\langle\lambda^*, c( x)\rangle \geq -\langle\lambda^*,[c( x)]_+\rangle 
\geq -\Lambda\|[c( x)]_+\|,
\]
which together with \eqref{l1-0} leads to \eqref{opt-gap}. Also, by this relation, $F_\rho(x)=F(x)+\rho\|[c(x)]_+\|^2/2$, and $F_{\rho}^*\leq F^*$, one has
\[
 -\Lambda\|[c(x)]_+\| \leq F(x)-F^* = F_{\rho}(x)-\frac{\rho}{2}\|[c(x)]_+\|^2-F^*\leq F_{\rho}(x) -\frac{\rho}{2}\|[c(x)]_+\|^2- F_{\rho}^*,
\]
and hence 
\[
    \frac{\rho}{2}\|[c(x)]_+\|^2-\Lambda\|[c(x)]_+\|+F_{\rho}^*-F_{\rho}(x) \leq 0.
\]
It then follows from this and $\sqrt{a^2+b^2}\leq |a|+|b|$ that
\[
\|[c(x)]_+\| \leq \frac{\Lambda}{\rho}+\frac{\sqrt{\Lambda^2+2\rho(F_{\rho}(x)-F_{\rho}^*)}}{\rho} 
\leq \frac{2\Lambda}{\rho}+\sqrt{\frac{2(F_{\rho}(x)-F_{\rho}^*)}{\rho}}.
\]
\end{proof}

\subsection{Proof for the main results in Section \ref{sec:composite}}\label{sec:proof-composite}

In this subsection, we first establish several technical lemmas and then use them to prove Theorems \ref{thm:composite_fixed} and  \ref{thm:composite}.

For notational convenience, let 
\beq \label{L-rho}
f_{\rho}(x) := \bbE[\tilde f(x,\xi)]+\frac{\rho}{2}\|[c(x)]_+\|^2, \qquad L_{\rho} := L_{\nabla f}+\rho L_{\nabla c^2}
\eeq
for all $\rho>0$. It follows from these, the $L_{\nabla f}$-smoothness of $\bbE[\tilde f(\cdot,\xi)]$, and the $L_{\nabla c^2}$-smoothness of $\|[c(\cdot)]_+\|^2/2$ that $f_{\rho}$ is $L_{\rho}$-smooth.  One can also observe that $F_{\rho}(\cdot)=f_{\rho}(\cdot)+\psi(\cdot)$, where $F_\rho$ is defined in \eqref{penalty-prob}. These facts will be used in our subsequent analysis.  

The following lemma establishes recursive properties of the quantity $(\beta_k-1)\gamma_k(F_{\rho_k}(x_k)-F_{\rho_k}(x))+\|x-z_k\|^2/2$ and its expectation $\bbE\big[(\beta_k-1)\gamma_k(F_{\rho_k}(x_k)-F_{\rho_k}(x))+\|x-z_k\|^2/2\big]$ for all $x\in\cX$ and $k\geq 1$.

\begin{lemma}\label{l-iter}
Suppose that Assumptions \ref{a1} and \ref{a2} hold. Let $\{x_k\}$, $\{y_k\}$ and $\{z_k\}$ be generated in Algorithm \ref{alg1} with $\beta_k$, $ \gamma_k$, and $\rho_k$ satisfying $\beta_k\geq1$ and $L_{\rho_k}\gamma_k-\beta_k<0$ for all $k\geq1$, where $L_{\rho_k}$ is  defined in \eqref{L-rho}. Then for all $x\in\cX$ and $k\geq 1$, we have
\begin{align}
 & \beta_k\gamma_k\big(F_{\rho_k}(x_{k+1})-F_{\rho_k}(x)\big)+\frac{1}{2}\|x-z_{k+1}\|^2\leq(\beta_k-1)\gamma_k\big(F_{\rho_k}(x_k)-F_{\rho_k}(x)\big)+\frac{1}{2}\|x-z_k\|^2+\Delta_k(x), \label{determ-ineq}\\
 &  \bbE\Big[\beta_k\gamma_k\big(F_{\rho_k}(x_{k+1})-F_{\rho_k}(x)\big)+\frac{1}{2}\|x-z_{k+1}\|^2\Big]\leq \bbE\Big[(\beta_k-1)\gamma_k\big(F_{\rho_k}(x_k)-F_{\rho_k}(x)\big)+\frac{1}{2}\|x-z_k\|^2\Big] \nn \\
    &\qquad\qquad\qquad\qquad\qquad\qquad\qquad\qquad\qquad\qquad \quad+\frac{\beta_k\gamma_k^2}{2(\beta_k-L_{\rho_k}\gamma_k)}\bbE[\|\delta_k\|^2], \label{stoch-ineq}
\end{align}
where $F_{\rho_k}$ and $f_{\rho_k}$  are  defined in \eqref{penalty-prob} and \eqref{L-rho}, respectively, and 
\begin{equation}\label{delta}
    \delta_k=g_k-\nabla f_{\rho_k}(y_k), \quad \Delta_k(x)=\gamma_k\langle \delta_k, x-z_{k+1}\rangle-\frac{\beta_k-L_{\rho_k}\gamma_k}{2\beta_k}\|z_{k+1}-z_k\|^2.
\end{equation}
\end{lemma}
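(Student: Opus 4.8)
The plan is to prove the deterministic inequality \eqref{determ-ineq} first, treating $g_k$ (hence $\delta_k$) as a fixed vector, and then obtain the stochastic inequality \eqref{stoch-ineq} by taking conditional expectations and bounding the cross term $\gamma_k\langle\delta_k,x-z_{k+1}\rangle$. The starting point is the standard "three-point" machinery for accelerated proximal gradient schemes. First I would use $L_{\rho_k}$-smoothness of $f_{\rho_k}$ to write, for the point $x_{k+1}=(1-\beta_k^{-1})x_k+\beta_k^{-1}z_{k+1}$,
\[
f_{\rho_k}(x_{k+1})\le f_{\rho_k}(y_k)+\langle\nabla f_{\rho_k}(y_k),x_{k+1}-y_k\rangle+\frac{L_{\rho_k}}{2}\|x_{k+1}-y_k\|^2 .
\]
Since $x_{k+1}-y_k=\beta_k^{-1}(z_{k+1}-z_k)$ (using $y_k=(1-\beta_k^{-1})x_k+\beta_k^{-1}z_k$), the last term becomes $\tfrac{L_{\rho_k}}{2\beta_k^2}\|z_{k+1}-z_k\|^2$, and the linear term splits along the convex combination $x_{k+1}-y_k=(1-\beta_k^{-1})(x_k-y_k)+\beta_k^{-1}(z_{k+1}-y_k)$.

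Next I would handle the proximal step. The update $z_{k+1}=\prox_{\gamma_k\psi}(z_k-\gamma_k g_k)$ is the minimizer of $x\mapsto \gamma_k(\langle g_k,x\rangle+\psi(x))+\tfrac12\|x-z_k\|^2$, so by strong convexity of that objective (the standard prox inequality),
\[
\gamma_k\langle g_k,z_{k+1}-x\rangle+\gamma_k\psi(z_{k+1})-\gamma_k\psi(x)\le\tfrac12\|x-z_k\|^2-\tfrac12\|x-z_{k+1}\|^2-\tfrac12\|z_{k+1}-z_k\|^2
\]
for all $x\in\cX$. Writing $g_k=\nabla f_{\rho_k}(y_k)+\delta_k$ turns the $\langle g_k,\cdot\rangle$ term into $\langle\nabla f_{\rho_k}(y_k),z_{k+1}-x\rangle+\langle\delta_k,z_{k+1}-x\rangle$; the $\delta_k$ piece is exactly what will produce $\gamma_k\langle\delta_k,x-z_{k+1}\rangle$ in $\Delta_k(x)$. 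For the $f_{\rho_k}$ part, I would combine: (a) $\langle\nabla f_{\rho_k}(y_k),y_k-x_k\rangle$ and convexity $f_{\rho_k}(y_k)+\langle\nabla f_{\rho_k}(y_k),x_k-y_k\rangle\le f_{\rho_k}(x_k)$; (b) $\langle\nabla f_{\rho_k}(y_k),y_k-x\rangle$ and convexity $f_{\rho_k}(y_k)+\langle\nabla f_{\rho_k}(y_k),x-y_k\rangle\le f_{\rho_k}(x)$; assembling the descent bound, the prox bound, and these two convexity bounds with the weights $1-\beta_k^{-1}$ and $\beta_k^{-1}$, then multiplying through by $\beta_k\gamma_k$. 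The $\|z_{k+1}-z_k\|^2$ terms combine into the coefficient $-\tfrac{\beta_k-L_{\rho_k}\gamma_k}{2\beta_k}$ (this is where the hypothesis $L_{\rho_k}\gamma_k-\beta_k<0$, equivalently $\beta_k-L_{\rho_k}\gamma_k>0$, guarantees the sign), and one telescopes the objective terms into $\beta_k\gamma_k(F_{\rho_k}(x_{k+1})-F_{\rho_k}(x))$ on the left and $(\beta_k-1)\gamma_k(F_{\rho_k}(x_k)-F_{\rho_k}(x))$ on the right, using $F_{\rho_k}=f_{\rho_k}+\psi$ and Jensen/convexity for the $\psi$ part across the convex combination. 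This yields \eqref{determ-ineq} with $\Delta_k(x)$ as defined in \eqref{delta}.

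For \eqref{stoch-ineq} I would take $\bbE[\cdot\mid\mathcal F_k]$ (conditioning on everything up to $z_k,x_k$, so $y_k$ is measurable) of \eqref{determ-ineq}. The only term needing work is $\bbE[\Delta_k(x)]=\gamma_k\bbE[\langle\delta_k,x-z_{k+1}\rangle]-\tfrac{\beta_k-L_{\rho_k}\gamma_k}{2\beta_k}\bbE[\|z_{k+1}-z_k\|^2]$. Since $x$ is fixed but $z_{k+1}$ is not independent of $\delta_k$, I cannot simply drop the cross term via $\bbE[\delta_k]=0$; instead I would introduce the "ghost iterate" $\bar z_{k+1}=\prox_{\gamma_k\psi}(z_k-\gamma_k\nabla f_{\rho_k}(y_k))$ (the prox step with the exact gradient), write $\langle\delta_k,x-z_{k+1}\rangle=\langle\delta_k,x-\bar z_{k+1}\rangle+\langle\delta_k,\bar z_{k+1}-z_{k+1}\rangle$, use $\bbE[\langle\delta_k,x-\bar z_{k+1}\rangle\mid\mathcal F_k]=0$ (as $\bar z_{k+1}$ is $\mathcal F_k$-measurable and $\bbE[\delta_k\mid\mathcal F_k]=0$), bound $\langle\delta_k,\bar z_{k+1}-z_{k+1}\rangle\le\|\delta_k\|\,\|\bar z_{k+1}-z_{k+1}\|\le\gamma_k\|\delta_k\|^2$ by nonexpansiveness of the prox, and then absorb $\gamma_k^2\|\delta_k\|^2$ against the negative $\|z_{k+1}-z_k\|^2$ term using Young's inequality $\gamma_k\|\delta_k\|\|z_{k+1}-z_k\|\le\tfrac{\beta_k-L_{\rho_k}\gamma_k}{2\beta_k}\|z_{k+1}-z_k\|^2+\tfrac{\beta_k\gamma_k^2}{2(\beta_k-L_{\rho_k}\gamma_k)}\|\delta_k\|^2$. (Equivalently, one can bound $\gamma_k\langle\delta_k,z_k-z_{k+1}\rangle$ directly by Young after noting $\bbE[\langle\delta_k,x-z_k\rangle\mid\mathcal F_k]=0$.) Taking total expectations gives \eqref{stoch-ineq}.

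The main obstacle is the bookkeeping in assembling \eqref{determ-ineq}: correctly choosing the convex-combination weights, tracking the $\|z_{k+1}-z_k\|^2$ coefficients so they collapse to exactly $-\tfrac{\beta_k-L_{\rho_k}\gamma_k}{2\beta_k}$, and getting the $\psi$-terms to telescope as $\beta_k\gamma_k F_{\rho_k}(x_{k+1})-(\beta_k-1)\gamma_k F_{\rho_k}(x_k)-\gamma_k F_{\rho_k}(x)$ via convexity of $\psi$ along $x_{k+1}=(1-\beta_k^{-1})x_k+\beta_k^{-1}z_{k+1}$. For the stochastic step, the subtlety is that $\delta_k$ and $z_{k+1}$ are dependent, which forces the ghost-iterate/nonexpansiveness detour rather than a naive unbiasedness argument; once that is in place the rest is routine.
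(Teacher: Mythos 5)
Your plan for \eqref{determ-ineq} is exactly the paper's argument: smoothness of $f_{\rho_k}$ at $x_{k+1}$ with $x_{k+1}-y_k=\beta_k^{-1}(z_{k+1}-z_k)$, the strong-convexity (three-point) inequality for the prox step, convexity of $f_{\rho_k}$ at $x_k$ and at $x$, convexity of $\psi$ along the convex combination, and multiplication by $\beta_k\gamma_k$; the bookkeeping closes exactly as you describe. For \eqref{stoch-ineq}, your \emph{parenthetical} route is the paper's proof verbatim: write $\Delta_k(x)=\gamma_k\langle\delta_k,x-z_k\rangle+\gamma_k\langle\delta_k,z_{k+1}-z_k\rangle-\tfrac{\beta_k-L_{\rho_k}\gamma_k}{2\beta_k}\|z_{k+1}-z_k\|^2$, kill the first term by conditional unbiasedness (since $z_k$ is measurable with respect to the past), and apply Young's inequality to the second against the negative quadratic, which produces precisely the constant $\tfrac{\beta_k\gamma_k^2}{2(\beta_k-L_{\rho_k}\gamma_k)}$. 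Your \emph{primary} route (ghost iterate plus nonexpansiveness of the prox) is an unnecessary detour here and, as written, does not quite deliver the stated inequality: it yields $\bbE[\Delta_k(x)]\le\gamma_k^2\,\bbE[\|\delta_k\|^2]$, and $\gamma_k^2\le\tfrac{\beta_k\gamma_k^2}{2(\beta_k-L_{\rho_k}\gamma_k)}$ holds only when $2L_{\rho_k}\gamma_k\ge\beta_k$, which the lemma's hypothesis $L_{\rho_k}\gamma_k<\beta_k$ does not guarantee (and the later applications in fact assume the opposite, $2L_{\rho_k}\gamma_k\le\beta_k$). So you should promote the parenthetical argument to the main one; with that substitution the proof matches the paper's.
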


\begin{proof}
Observe from the definitions of $x_{k+1}$ and $y_k$ in Algorithm \ref{alg1} that
\begin{equation}\label{l1-1}
    x_{k+1}-y_k=\beta_k^{-1}(z_{k+1}-z_k).
\end{equation}
By this, the definition of $x_{k+1}$, and the convexity and $L_{\rho_k}$-smoothness of $f_{\rho_k}$, one has
\begin{align}
    &f_{\rho_k}(x_{k+1})\leq f_{\rho_k}(y_k)+\langle\nabla f_{\rho_k}(y_k), x_{k+1}-y_k\rangle+\frac{L_{\rho_k}}{2}\|x_{k+1}-y_k\|^2 \nn\\
    &= f_{\rho_k}(y_k)+\beta_k^{-1}\langle\nabla f_{\rho_k}(y_k), z_{k+1}-y_k\rangle+(1-\beta_k^{-1})\langle\nabla f_{\rho_k}(y_k), x_k-y_k\rangle+\frac{L_{\rho_k}}{2}\|x_{k+1}-y_k\|^2 \nn\\
    &= \beta_k^{-1}\left(f_{\rho_k}(y_k)+\langle\nabla f_{\rho_k}(y_k), z_{k+1}-y_k\rangle\right)+(1-\beta_k^{-1})\left(f_{\rho_k}(y_k)+\langle\nabla f_{\rho_k}(y_k), x_k-y_k\rangle\right)+\frac{L_{\rho_k}}{2}\|x_{k+1}-y_k\|^2 \nn\\
    &\leq \beta_k^{-1} \left(f_{\rho_k}(y_k)+\langle\nabla f_{\rho_k}(y_k), z_{k+1}-y_k\rangle\right) + (1-\beta_k^{-1})f_{\rho_k}(x_k) +\frac{L_{\rho_k}}{2}\|x_{k+1}-y_k\|^2\nn\\
    &=\beta_k^{-1} \left(f_{\rho_k}(y_k)+\langle g_k, z_{k+1}-y_k\rangle\right) +\beta_k^{-1}\langle\delta_k, y_k-z_{k+1}\rangle+(1-\beta_k^{-1})f_{\rho_k}(x_k) +\frac{L_{\rho_k}\beta_k^{-2}}{2}\|z_{k+1}-z_k\|^2, \label{l1-2}
\end{align}
where the first inequality is due to the $L_{\rho_k}$-smoothness of $f_{\rho_k}$,  the first equality follows from the definition of $x_{k+1}$ in Algorithm \ref{alg1}, and the second inequality is due to $\beta_k\geq 1$ and the convexity of $f_{\rho_k}$, while the last equality follows from \eqref{delta} and \eqref{l1-1}.

In addition, by the definition of $z_{k+1}$ in Algorithm \ref{alg1}, one has
\begin{equation*}
    z_{k+1}=\argmin_x\left\{\frac{1}{2\gamma_k}\|x-(z_k-\gamma_k g_k)\|^2+\psi(x)\right\},
\end{equation*}
which, together with the strong convexity of its objective function, implies that for all $x\in \cX$,
\begin{equation*}
    \frac{1}{2\gamma_k}\|z_{k+1}-z_k+\gamma_k g_k\|^2+\psi(z_{k+1})\leq \frac{1}{2\gamma_k}\|x-z_k+\gamma_k g_k\|^2+\psi(x) -\frac{1}{2\gamma_k}\|x-z_{k+1}\|^2.
\end{equation*}
It then follows that
\begin{equation*}
\langle g_k, z_{k+1}-y_k\rangle \leq \langle g_k, x-y_k\rangle +\psi(x)- \frac{1}{2\gamma_k}\|z_{k+1}-z_k\|^2+\frac{1}{2\gamma_k}\|x-z_k\|^2 -\frac{1}{2\gamma_k}\|x-z_{k+1}\|^2-\psi(z_{k+1}).
\end{equation*}
Using this, \eqref{delta}, and the convexity of $f_{\rho_k}$, we have
\begin{align*}
    f_{\rho_k}(y_k)+\langle g_k, z_{k+1}-y_k\rangle&\leq f_{\rho_k}(y_k) +\langle \nabla f_{\rho_k}(y_k), x-y_k\rangle+\langle \delta_k, x-y_k\rangle+\psi(x)-\frac{1}{2\gamma_k}\|z_{k+1}-z_k\|^2 \\
    &\quad +\frac{1}{2\gamma_k}\|x-z_k\|^2-\frac{1}{2\gamma_k}\|x-z_{k+1}\|^2 -\psi(z_{k+1})\\
    \leq F_{\rho_k}(x) +\langle \delta_k, x-y_k\rangle -&\frac{1}{2\gamma_k}\|z_{k+1}-z_k\|^2+\frac{1}{2\gamma_k}\|x-z_k\|^2-\frac{1}{2\gamma_k}\|x-z_{k+1}\|^2 -\psi(z_{k+1}).
\end{align*}
This together with \eqref{l1-2} implies that
\begin{align*}
    f_{\rho_k}(x_{k+1})&\leq \beta_k^{-1} \Big(F_{\rho_k}(x) +\langle \delta_k, x-y_k\rangle -\frac{1}{2\gamma_k}\|z_{k+1}-z_k\|^2+\frac{1}{2\gamma_k}\|x-z_k\|^2-\frac{1}{2\gamma_k}\|x-z_{k+1}\|^2 -\psi(z_{k+1})\Big) \\
    &\quad +(1-\beta_k^{-1})f_{\rho_k}(x_k)+\beta_k^{-1}\langle\delta_k, y_k-z_{k+1}\rangle +\frac{L_{\rho_k}\beta_k^{-2}}{2}\|z_{k+1}-z_k\|^2 \\
    &= \beta_k^{-1}F_{\rho_k}(x)+(1-\beta_k^{-1})f_{\rho_k}(x_k) +\beta_k^{-1}\langle \delta_k, x-z_{k+1}\rangle +\frac{1}{2}\Big(L_{\rho_k}\beta_k^{-2}-\frac{\beta_k^{-1}}{\gamma_k}\Big)\|z_{k+1}-z_k\|^2\\
    &\quad +\frac{\beta_k^{-1}}{2\gamma_k}\|x-z_k\|^2-\frac{\beta_k^{-1}}{2\gamma_k}\|x-z_{k+1}\|^2 - \beta_k^{-1}\psi(z_{k+1}).
\end{align*}
Also, by $\beta_k \geq 1$, the definition of $x_{k+1}$, and the convexity of $\psi$, one has
\begin{equation*}
    \psi(x_{k+1})=\psi\big(\beta_k^{-1}z_{k+1}+(1-\beta_k^{-1}) x_k\big)\leq \beta_k^{-1}\psi(z_{k+1})+(1-\beta_k^{-1})\psi(x_k).
\end{equation*}
Summing up the above two inequalities gives 
\begin{align*}
   F_{\rho_k}(x_{k+1})&\leq \beta_k^{-1}F_{\rho_k}(x)+(1-\beta_k^{-1})F_{\rho_k}(x_k)+\beta_k^{-1}\langle \delta_k, x-z_{k+1}\rangle +\frac{1}{2}\Big(L_{\rho_k}\beta_k^{-2}-\frac{\beta_k^{-1}}{\gamma_k}\Big)\|z_{k+1}-z_k\|^2 \\
    &\quad + \frac{\beta_k^{-1}}{2\gamma_k}\|x-z_k\|^2-\frac{\beta_k^{-1}}{2\gamma_k}\|x-z_{k+1}\|^2.
\end{align*}
Multiplying this by $\beta_k\gamma_k$ and rearranging the terms lead to the conclusion \eqref{determ-ineq}. 

We next show that \eqref{stoch-ineq} holds. To this end, one can first observe that 
\begin{align}
   \Delta_k(x)  &\overset{\eqref{delta}}= \gamma_k\langle \delta_k, x-z_k\rangle +\gamma_k\langle \delta_k, z_{k+1}-z_k\rangle-\frac{\beta_k-L_{\rho_k}\gamma_k}{2\beta_k}\|z_{k+1}-z_k\|^2 \nn \\
    &\leq \gamma_k\langle \delta_k, x-z_k\rangle + \frac{\beta_k\gamma_k^2\|\delta_k\|^2}{2(\beta_k-L_{\rho_k}\gamma_k)}, \label{Delta-ineq}
\end{align}
where the inequality follows from $\beta_k-L_{\rho_k}\gamma_k>0$ and Young's inequality. 
Let $\Xi_k=\{\xi_1,\xi_2,\dots,\xi_{k-1}\}$ denote the collection of samples drawn up to iteration $k-1$ in Algorithm 
\ref{alg1}.  It then follows from \eqref{delta}, $g_k=\nabla \tilde f (y_k,\xi_k)+{\rho_k}  \nabla c(y_k)[c(y_k)]_+$, $y_k\in\cX$, and Assumption \ref{a2} that for all $x\in\cX$, 
\begin{equation*}
    \bbE[\langle\delta_k, x-z_k\rangle|\Xi_k]=\bbE[\langle\nabla \tilde f(y_k,\xi_k)-\nabla f(y_k), x-z_k\rangle|\Xi_k]=\big\langle \bbE[\nabla \tilde f(y_k,\xi_k)|\Xi_k]-\nabla f(y_k), x-z_k\big\rangle=0.
\end{equation*}
This and \eqref{Delta-ineq} immediately imply that
\[
\bbE[\Delta_k(x)|\Xi_k] \leq  \frac{\beta_k\gamma_k^2\bbE[\|\delta_k\|^2|\Xi_k]}{2(\beta_k-L_{\rho_k}\gamma_k)}.
\]
Using this and taking expectation on both sides of \eqref{determ-ineq} conditional on $\Xi_k$, we obtain
\begin{align*}
    \bbE\Big[\beta_k\gamma_k(F_{\rho_k}(x_{k+1})-F_{\rho_k}(x))+\frac{1}{2}\|x-z_{k+1}\|^2\Big|\Xi_k\Big]&\leq \bbE\Big[(\beta_k-1)\gamma_k(F_{\rho_k}(x_k)-F_{\rho_k}(x))+\frac{1}{2}\|x-z_k\|^2\Big|\Xi_k\Big]\\
        &\quad +\frac{\beta_k\gamma_k^2\bbE[\|\delta_k\|^2|\Xi_k]}{2(\beta_k-L_{\rho_k}\gamma_k)}.
\end{align*}
The conclusion \eqref{stoch-ineq} then follows from taking expectation on both sides of this inequality.
\end{proof}

We next derive upper bounds for $F_\rho(x_k)-F^*_\rho $ and $\bbE[ F_\rho(x_k)-F^*_\rho]$ under the setting where constant penalty parameter $\rho_k \equiv \rho$ is used in Algorithm \ref{alg1} for some $\rho>0$.

\begin{lemma}\label{l-sum-fixed}
Suppose that Assumptions \ref{a1} and \ref{a2} hold.  Let $\{x_k\}$ be generated in Algorithm \ref{alg1} with $\rho_k \equiv \rho$,  and $\beta_k\in [1,\infty)$ and $\gamma_k>0$ satisfying $\beta_1=1$, $0<(\beta_{k+1}-1)\gamma_{k+1}\leq \beta_k\gamma_k$, and $2L_\rho\gamma_k\leq\beta_k$ for some $\rho>0$ and all $k\geq 1$,  where $L_{\rho}$ is defined in \eqref{L-rho}.  Then for all $k\geq 2$, we have
\begin{align*}
 & F_\rho(x_k)-F^*_\rho \leq\frac{1}{(\beta_k-1)\gamma_k}\Big(\frac{1}{2}D_\cX^2+2GD_\cX\sum_{i=1}^{k-1}\gamma_i\Big),  \\
 &  \bbE[ F_\rho(x_k)-F^*_\rho] \leq \frac{1}{(\beta_k-1)\gamma_k}\Big(\frac{1}{2}D_\cX^2+\sigma^2\sum_{i=1}^{k-1}\gamma_i^2\Big),
\end{align*}
where $F_\rho$ and $F^*_\rho$ are defined in \eqref{penalty-prob}, and $D_\cX$, $\sigma$ and $G$ are given in Assumptions \ref{a1} and \ref{a2}, respectively.
\end{lemma}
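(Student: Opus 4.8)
The plan is to specialize the recursive inequalities \eqref{determ-ineq} and \eqref{stoch-ineq} of Lemma~\ref{l-iter} to a minimizer of $F_\rho$ and telescope. First I would verify the hypotheses of Lemma~\ref{l-iter}: with $\rho_k\equiv\rho$, the standing assumption $2L_\rho\gamma_k\le\beta_k$ gives $L_\rho\gamma_k\le\beta_k/2<\beta_k$, so $L_{\rho_k}\gamma_k-\beta_k<0$, and $\beta_k\ge1$ is assumed; hence \eqref{determ-ineq} and \eqref{stoch-ineq} apply for every $k\ge1$. Since $F_\rho=f_\rho+\psi$ is proper, closed, and has domain $\cX$ (which is bounded), its sublevel sets are closed and bounded, hence compact, so $F_\rho$ attains its minimum on $\cX$; fix a minimizer $x_\rho^*\in\cX$, so that $F_\rho(x_j)-F_\rho^*\ge0$ for all $j$.

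Next I would bound the error terms. Because $c$ is deterministic, the penalty gradient cancels in $\delta_k$, leaving $\delta_k=\nabla\tf(y_k,\xi_k)-\nabla f(y_k)$ with $y_k\in\cX$; Assumption~\ref{a2} then gives $\|\delta_k\|\le\|\nabla\tf(y_k,\xi_k)\|+\|\nabla f(y_k)\|\le2G$ and $\bbE[\|\delta_k\|^2\mid\Xi_k]\le\sigma^2$, where $\Xi_k$ denotes the samples drawn before iteration $k$. Consequently, using $z_{k+1}=\prox_{\gamma_k\psi}(\cdot)\in\dom\psi=\cX$ so that $\|x_\rho^*-z_{k+1}\|\le D_\cX$, and discarding the nonpositive term in \eqref{delta}, we get $\Delta_k(x_\rho^*)\le\gamma_k\langle\delta_k,x_\rho^*-z_{k+1}\rangle\le2GD_\cX\gamma_k$. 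Likewise, from $\beta_k-L_\rho\gamma_k\ge\beta_k/2$ the coefficient $\beta_k\gamma_k^2/\big(2(\beta_k-L_\rho\gamma_k)\big)$ in \eqref{stoch-ineq} is at most $\gamma_k^2$, so the additive error in \eqref{stoch-ineq} is at most $\gamma_k^2\bbE[\|\delta_k\|^2]\le\gamma_k^2\sigma^2$.

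Now I would close up the recursion. Set $x=x_\rho^*$ in \eqref{determ-ineq} and let $b_k:=(\beta_k-1)\gamma_k\big(F_\rho(x_k)-F_\rho^*\big)+\tfrac12\|x_\rho^*-z_k\|^2$. Using $(\beta_{k+1}-1)\gamma_{k+1}\le\beta_k\gamma_k$ together with $F_\rho(x_{k+1})-F_\rho^*\ge0$, the left-hand side of \eqref{determ-ineq} dominates $b_{k+1}$, while its right-hand side equals $b_k+\Delta_k(x_\rho^*)$; hence $b_{k+1}\le b_k+2GD_\cX\gamma_k$. Telescoping over $i=1,\dots,k-1$ and using $b_1=\tfrac12\|x_\rho^*-z_1\|^2\le\tfrac12 D_\cX^2$ (the first term of $b_1$ vanishes since $\beta_1=1$, and $z_1=x_1\in\cX$) yields $b_k\le\tfrac12 D_\cX^2+2GD_\cX\sum_{i=1}^{k-1}\gamma_i$. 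Dropping the nonnegative norm term from $b_k$ and dividing by $(\beta_k-1)\gamma_k>0$ (valid for $k\ge2$) gives the first bound. The expectation bound follows identically from \eqref{stoch-ineq}: with $\bar b_k:=\bbE\big[(\beta_k-1)\gamma_k(F_\rho(x_k)-F_\rho^*)+\tfrac12\|x_\rho^*-z_k\|^2\big]$ one obtains $\bar b_{k+1}\le\bar b_k+\gamma_k^2\sigma^2$, hence $\bar b_k\le\tfrac12 D_\cX^2+\sigma^2\sum_{i=1}^{k-1}\gamma_i^2$, and dividing by $(\beta_k-1)\gamma_k$ finishes.

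The step requiring the most care is the index shift in the telescoping: it is precisely the pairing of the hypothesis $(\beta_{k+1}-1)\gamma_{k+1}\le\beta_k\gamma_k$ with the nonnegativity $F_\rho(x_{k+1})-F_\rho^*\ge0$ — which is why $x$ must be taken to be a minimizer of $F_\rho$ — that lets the potential $b_k$ collapse into a clean one-step recursion. The remaining ingredients (the $2G$ bound on $\|\delta_k\|$ via cancellation of the deterministic penalty gradient, the $\sigma^2$ second-moment bound, and the $\beta_k-L_\rho\gamma_k\ge\beta_k/2$ estimate) are routine given Assumptions~\ref{a1} and~\ref{a2}.
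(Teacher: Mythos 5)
Your proposal is correct and follows essentially the same route as the paper's proof: invoke Lemma~\ref{l-iter} at a minimizer $x^*_\rho$ of $F_\rho$, bound $\Delta_k(x^*_\rho)$ via $\|\delta_k\|\le 2G$ and $\|x^*_\rho-z_{k+1}\|\le D_\cX$ (resp.\ bound the coefficient in \eqref{stoch-ineq} by $\gamma_k^2$ and use $\bbE[\|\delta_k\|^2]\le\sigma^2$), and telescope using $(\beta_{k+1}-1)\gamma_{k+1}\le\beta_k\gamma_k$ together with $F_\rho(x_{k+1})-F_\rho^*\ge0$. The only difference is cosmetic (packaging the chain of inequalities into a potential $b_k$), so nothing further is needed.
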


\begin{proof}
Since $2L_\rho\gamma_k\leq\beta_k$ and $\beta_k\geq 1$ for all $k$, the assumption of Lemma \ref{l-iter} holds.  Let $x^*_\rho\in\argmin\limits_{x} F_\rho(x)$. It follows from this and \eqref{penalty-prob} that $F^*_\rho=F_\rho(x^*_\rho)$ and $F_\rho(x_k) \geq F^*_\rho$ for all $k$. By these, \eqref{determ-ineq} with $x=x^*_\rho$, \eqref{delta}, and the assumption that $\rho_k= \rho$ and $0<(\beta_{k+1}-1)\gamma_{k+1}\leq\beta_k\gamma_k$ for all $k$,  one has that 
\begin{align}
    &(\beta_{i+1}-1)\gamma_{i+1}(F_\rho(x_{i+1})-F^*_\rho)+\frac{1}{2}\|x^*_\rho-z_{i+1}\|^2 \leq \beta_i\gamma_i(F_\rho(x_{i+1})-F^*_\rho)+\frac{1}{2}\|x^*_\rho-z_{i+1}\|^2\nn\\
    &\leq (\beta_i-1)\gamma_i(F_\rho(x_i)-F^*_\rho)+\frac{1}{2}\|x^*_\rho-z_i\|^2+\gamma_i\langle \delta_i, x^*_\rho-z_{i+1}\rangle-\frac{\beta_i-L_\rho\gamma_i}{2\beta_i}\|z_{i+1}-z_i\|^2\nn\\
    &\leq (\beta_i-1)\gamma_i(F_\rho(x_i)-F^*_\rho)+\frac{1}{2}\|x^*_\rho-z_i\|^2 + \gamma_i\|\delta_i\|\|x^*_\rho-z_{i+1}\| \qquad \forall i \geq 1.\label{rec}
\end{align}
In addition, by \eqref{delta} and Assumption \ref{a2}, one can observe that 
\begin{equation}\label{deltabound1}
    \|\delta_i\|\overset{\eqref{delta}}{=}\|\nabla \tilde f(y_i, \xi_i)-\bbE[\nabla \tf(y_i,\xi)]\|\leq \|\nabla \tilde f(y_i, \xi_i)\|+\bbE[\|\nabla \tf(y_i,\xi)\|]\leq 2G.
\end{equation}
Using this, $\beta_1=1$, $\|x^*_\rho-z_1\|\leq D_\cX$, and summing up \eqref{rec} from $i=1$ to $k-1$, we obtain that
\begin{align*}
    (\beta_k-1)\gamma_k(F_\rho(x_k)-F^*_\rho)&\leq (\beta_1-1)\gamma_1(F_\rho(x_1)-F^*_\rho)+\frac{1}{2}\|x^*_\rho-z_1\|^2+\sum_{i=1}^{k-1}\gamma_i\|\delta_i\|\|x^*_\rho-z_i\| \\
    &\leq \frac{D_\cX^2}{2}+2GD_\cX\sum_{i=1}^{k-1}\gamma_i,
\end{align*}
which implies that the first conclusion of this lemma holds.

We next show that the second conclusion of this lemma holds.  Indeed, by $2L_\rho\gamma_k\leq\beta_k$ and $\beta_k\geq1$, one has $\beta_k/(\beta_k-L_\rho\gamma_k)\leq 2$. Using this,  \eqref{stoch-ineq}, $0<(\beta_{k+1}-1)\gamma_{k+1}\leq\beta_k\gamma_k$, and $F_\rho(x) \geq F^*_\rho$, we obtain that
\begin{align*}
&  \bbE\Big[(\beta_{i+1}-1)\gamma_{i+1}(F_\rho(x_{i+1})-F^*_\rho)+\frac{1}{2}\|x^*_\rho-z_{i+1}\|^2\Big] \leq  \bbE\Big[\beta_i\gamma_i (F_\rho(x_{i+1})-F^*_\rho) +\frac{1}{2}\|x^*_\rho-z_{i+1}\|^2\Big] \\
    &\leq  \bbE\Big[(\beta_i-1)\gamma_i(F_\rho(x_i)-F^*_\rho)
     +\frac{1}{2}\|x^*_\rho-z_{i}\|^2 \Big]+\gamma_i^2\bbE[\|\delta_i\|^2],
\end{align*}
where the second inequality follows from \eqref{stoch-ineq} and $\beta_i/(\beta_i-L_\rho\gamma_i)\leq 2$. 
Using $\beta_1=1$, $\bbE[\|\delta_i\|^2]\leq\sigma^2$, $\|x^*_\rho-z_1\|\leq D_\cX$, and summing up the above inequalities from $i=1$ to $k-1$, we have
\begin{equation*}
    (\beta_k-1)\gamma_k\bbE[F_\rho(x_k)-F^*_\rho]\leq \frac{1}{2}\|x^*_\rho-z_{1}\|^2+\sum_{i=1}^{k-1}\gamma_i^2\bbE[\|\delta_i\|^2] \leq \frac{1}{2}D_\cX^2+\sigma^2\sum_{i=1}^{k-1}\gamma_i^2,
\end{equation*}
which implies that the second conclusion of this lemma holds. 
\end{proof}

We are now ready to prove Theorem \ref{thm:composite_fixed}.

\begin{proof}[\textbf{Proof of Theorem \ref{thm:composite_fixed}}]
Notice from \eqref{def-para2} that $\rho_k = \rho$, $\beta_k=(k+1)/2$, and $\gamma_k=(k+1)/(4L_\rho)$, where $L_\rho=L_{\nabla f}+\rho L_{\nabla c^2}$ and $\rho=K^{3/2}$. Observe that $0<\big((k+2)/2-1\big)(k+2) \leq (k+1)^2/2$ for all $k\geq 1$, which implies $0<(\beta_{k+1}-1)\gamma_{k+1}\leq \beta_k\gamma_k$. In addition, $2L_\rho\gamma_k\leq\beta_k$ clearly holds. Hence, the assumption of Lemma \ref{l-sum-fixed} holds for such $\rho_k$, $\beta_k$, and $\gamma_k$.  It then follows from $K\geq 2$ and Lemma \ref{l-sum-fixed} with such $\rho_k$, $\beta_k$, and $\gamma_k$ that 
\begin{align}
    F_{\rho}(x_K)-F_{\rho}^*&\leq\frac{1}{(\beta_K-1)\gamma_K}\Big(\frac{1}{2}D_\cX^2+2GD_\cX\sum_{i=1}^{K-1}\gamma_i\Big) \nn \\
    &=\frac{4L_\rho}{((K+1)/2-1) (K+1)}\Big(\frac{1}{2}D_\cX^2+2GD_\cX\sum_{i=1}^{K-1}\frac{i+1}{4L_\rho}\Big)\nn\\
    &= \frac{4L_\rho D_\cX^2}{K^2-1}+\frac{2GD_\cX(K+2)}{K+1}\, \leq \, \frac{4L_\rho D_\cX^2}{K^2-1}+\frac{8GD_\cX}{3},\label{t2_1}
\end{align}
where  the last inequality is due to $K\geq 2$. By \eqref{feasibility} and \eqref{t2_1}, one has
\begin{align*}
   & \|[c(x_K)]_+\|
     \overset{\eqref{feasibility}}{\leq} \frac{2\Lambda}{\rho}+\sqrt{\frac{2(F_{\rho}(x_K)-F_{\rho}^*)}{\rho}} \overset{\eqref{t2_1}}{\leq} \frac{2\Lambda}{\rho}+\sqrt{\frac{2}{\rho}\left(\frac{4L_\rho D_\cX^2}{K^2-1}+\frac{8GD_\cX}{3}\right)} \\
&\leq \frac{2\Lambda}{\rho}+\sqrt{\frac{8L_\rho D_\cX^2}{\rho(K^2-1)}}+\sqrt{\frac{16GD_\cX}{3\rho}}
  =\frac{2\Lambda}{K^{3/2}}+\sqrt{\frac{8(L_{\nabla f}+L_{\nabla c^2}K^{3/2}) D_\cX^2}{K^{3/2}(K^2-1)}}+\sqrt{\frac{16GD_\cX}{3K^{3/2}}} \\
& \leq\frac{2\Lambda}{K^{3/2}}+\sqrt{\frac{8L_{\nabla f} D_\cX^2}{K^{3/2}(K^2-1)}}+\sqrt{\frac{8L_{\nabla c^2} D_\cX^2}{K^2-1}}+\sqrt{\frac{16GD_\cX}{3K^{3/2}}}, 
\end{align*}
which, along with $K^2-1 \geq 3K^2/4$ due to $K\geq 2$, implies that \eqref{thm2:ineq1} holds.

In addition, using $\rho=K^{3/2}$, $L_\rho=L_{\nabla f}+\rho L_{\nabla c^2}$, $K\geq 2$, and Lemma \ref{l-sum-fixed},  we have that
\begin{align}
   & \bbE[F_{\rho}(x_K)-F_{\rho}^*]\overset{{\rm Lemma}\, \ref{l-sum-fixed}}{\leq} \frac{1}{(\beta_K-1)\gamma_K}\Big(\frac{1}{2}D_\cX^2+\sigma^2\sum_{i=1}^{K-1}\gamma_i^2\Big) \nn \\
    &=\frac{4L_\rho}{((K+1)/2-1) (K+1)}\Big(\frac{1}{2}D_\cX^2+\sigma^2\sum_{i=1}^{K-1}\left(\frac{i+1}{4L_\rho}\right)^2\Big) \leq \frac{4L_\rho D_\cX^2}{K^2-1}+\frac{\sigma^2(K+1)^3}{6L_\rho (K^2-1)} \nn \\ 
& =\frac{4L_{\nabla f} D_\cX^2}{K^2-1}+\frac{4L_{\nabla c^2} D_\cX^2K^{3/2}}{K^2-1}+\frac{\sigma^2(K+1)^2}{2(L_{\nabla f}+L_{\nabla c^2}K^{3/2})(K-1)}\nn \\
& \leq \frac{16L_{\nabla f} D_\cX^2}{3K^2}+\frac{16L_{\nabla c^2} D_\cX^2}{3K^{1/2}}+\frac{9\sigma^2}{4L_{\nabla c^2}K^{1/2}} \leq \frac{6L_{\nabla f} D_\cX^2}{K^2}+\frac{6L_{\nabla c^2} D_\cX^2+3\sigma^2L_{\nabla c^2}^{-1}}{K^{1/2}}, \label{t2_2}
\end{align}
 where the second inequality follows from the fact that $\sum_{i=1}^{K-1}(i+1)^2\leq\int_{1}^K(\tau+1)^2d\tau\leq(K+1)^3/3$,  and the third inequality follows from the fact that $K^2-1 \geq 3K^2/4$, $K+1 \leq 3K/2$, and $K-1\geq K/2$ due to $K\geq 2$. The relation \eqref{thm2:ineq2} then follows from  \eqref{t2_2} and $\bbE[F(x_K)-F^*]\leq\bbE[F_{\rho}(x_K)-F_{\rho}^*]$ due to \eqref{opt-gap}.

Lastly, it follows from $\rho=K^{3/2}$, \eqref{feasibility}, and \eqref{t2_2} that
\begin{align*}
    \bbE[\|[c(x_K)]_+\|] & \overset{\eqref{feasibility}}
    {\leq} \frac{2\Lambda}{\rho}+\bbE\left[\sqrt{\frac{2F_{\rho}(x_K)-F_{\rho}^*}{\rho}}\,\right] \leq\frac{2\Lambda}{\rho}+\sqrt{\frac{2\bbE\left[F_{\rho}(x_K)-F_{\rho}^*\right]}{\rho}} \\
   &  \overset{\eqref{t2_2}}\leq \frac{2\Lambda}{K^{3/2}}+\sqrt{\frac{12L_{\nabla f} D_\cX^2}{K^{7/2}}}+\sqrt{12L_{\nabla c^2} D_\cX^2+6\sigma^2L_{\nabla c^2}^{-1}}\  \frac{1}{K},
\end{align*}
which together with \eqref{opt-gap} implies that \eqref{thm2:ineq3} holds.
\end{proof}

In the remainder of this subsection, we provide a proof of Theorem \ref{thm:composite}.  Before proceeding, we first derive  an upper bound on $F_{\rho_k}(x_k)-F^*_{\rho_k} $ and $\bbE\big[F_{\rho_k}(x_k)-F^*_{\rho_k}\big]$ for all $k\geq 2$.

\begin{lemma}\label{l-sum}
Suppose that Assumptions \ref{a1} and \ref{a2} hold. Let $\{x_k\}$ be generated in Algorithm \ref{alg1} with a non-decreasing positive $\{\rho_k\}$, $\beta_k\in [1,\infty)$ and $\gamma_k>0$ satisfying $\beta_1=1$, $0<(\beta_{k+1}-1)\gamma_{k+1}\leq \beta_k\gamma_k(1-2(\rho_{k+1}-\rho_k)/\rho_{k+1})$, and $2L_{\rho_k}\gamma_k\leq\beta_k$ for all $k\geq 1$,  where $L_{\rho_k}$ is defined in \eqref{L-rho}. Then for all $k\geq 2$, we have
\begin{align*}
 & F_{\rho_k}(x_k)-F^*_{\rho_k} \leq\frac{1}{(\beta_k-1)\gamma_k}\Big(\frac{1}{2}D_\cX^2+2GD_\cX\sum_{i=1}^{k-1}\gamma_i+\frac{\Lambda^2}{2}\sum_{i=1}^{k-1}\frac{\gamma_i}{\rho_i}+4\Lambda^2\sum_{i=1}^{k-1}\frac{\beta_i\gamma_i(\rho_{i+1}-\rho_i)}{\rho_{i+1}^2}\Big),  \\
 &  \bbE\big[F_{\rho_k}(x_k)-F^*_{\rho_k}\big] \leq \frac{1}{(\beta_k-1)\gamma_k}\Big(\frac{1}{2}D_\cX^2+\sigma^2\sum_{i=1}^{k-1}\gamma_i^2+\frac{\Lambda^2}{2}\sum_{i=1}^{k-1}\frac{\gamma_i}{\rho_i}+4\Lambda^2\sum_{i=1}^{k-1}\frac{\beta_i\gamma_i(\rho_{i+1}-\rho_i)}{\rho_{i+1}^2}\Big),
\end{align*}
where $F_{\rho_k}$ and $F^*_{\rho_k}$ are defined in \eqref{penalty-prob},  and $D_\cX$, $\Lambda$, $\sigma$ and $G$ are given in Assumptions \ref{a1} and \ref{a2}, respectively. 
\end{lemma}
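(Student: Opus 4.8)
The plan is to adapt the telescoping argument behind Lemma~\ref{l-sum-fixed}, now inserting correction terms to absorb the drift caused by the increasing penalties $\{\rho_k\}$. The hypotheses $\beta_k\ge1$ and $2L_{\rho_k}\gamma_k\le\beta_k$ make Lemma~\ref{l-iter} applicable, and I would invoke it with the \emph{fixed} test point $x=x^*$, where $x^*$ is an optimal solution of \eqref{prob1}; since $[c(x^*)]_+=0$ we have $F_{\rho_k}(x^*)=F^*$ for \emph{every} $k$, which is exactly what keeps the $\|x-z_k\|^2$ terms telescoping. I would also use two elementary consequences of Lemma~\ref{l-penalty}: (a) the refinement $0\le F^*-F^*_{\rho_k}\le \Lambda^2/(2\rho_k)$, obtained from \eqref{opt-gap} at a minimizer $x^*_{\rho_k}$ of $F_{\rho_k}$ together with $F^*_{\rho_k}=F(x^*_{\rho_k})+\tfrac{\rho_k}{2}\|[c(x^*_{\rho_k})]_+\|^2$ after minimizing $\tfrac{\rho_k}{2}t^2-\Lambda t$ over $t\ge0$; and (b) the squared form of \eqref{feasibility}, namely $\|[c(x)]_+\|^2\le 8\Lambda^2/\rho^2+4\big(F_\rho(x)-F^*_\rho\big)/\rho$, which follows from $(a+b)^2\le 2a^2+2b^2$.

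First I would take \eqref{determ-ineq} with $x=x^*$ and write $F_{\rho_k}(\cdot)-F_{\rho_k}(x^*)=\big(F_{\rho_k}(\cdot)-F^*_{\rho_k}\big)-\big(F^*-F^*_{\rho_k}\big)$ on both sides; collecting the $F^*-F^*_{\rho_k}$ contribution and using (a) turns the inequality into
\[
\beta_k\gamma_k\big(F_{\rho_k}(x_{k+1})-F^*_{\rho_k}\big)+\tfrac12\|x^*-z_{k+1}\|^2\le A_k+\tfrac{\Lambda^2\gamma_k}{2\rho_k}+\Delta_k(x^*),
\]
where $A_k:=(\beta_k-1)\gamma_k\big(F_{\rho_k}(x_k)-F^*_{\rho_k}\big)+\tfrac12\|x^*-z_k\|^2$. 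The crux is to lower-bound $\beta_k\gamma_k\big(F_{\rho_k}(x_{k+1})-F^*_{\rho_k}\big)$ by $(\beta_{k+1}-1)\gamma_{k+1}\big(F_{\rho_{k+1}}(x_{k+1})-F^*_{\rho_{k+1}}\big)$ up to a controllable error. Using $F_{\rho_k}=F_{\rho_{k+1}}-\tfrac{\rho_{k+1}-\rho_k}{2}\|[c(\cdot)]_+\|^2$, the monotonicity $F^*_{\rho_k}\le F^*_{\rho_{k+1}}$, and then bound (b) at $(\rho_{k+1},x_{k+1})$, one obtains
\[
\beta_k\gamma_k\big(F_{\rho_k}(x_{k+1})-F^*_{\rho_k}\big)\ge \beta_k\gamma_k\Big(1-\tfrac{2(\rho_{k+1}-\rho_k)}{\rho_{k+1}}\Big)\big(F_{\rho_{k+1}}(x_{k+1})-F^*_{\rho_{k+1}}\big)-\tfrac{4\Lambda^2\beta_k\gamma_k(\rho_{k+1}-\rho_k)}{\rho_{k+1}^2}.
\]
Since $F_{\rho_{k+1}}(x_{k+1})-F^*_{\rho_{k+1}}\ge0$ by definition of $F^*_{\rho_{k+1}}$, the stepsize hypothesis $(\beta_{k+1}-1)\gamma_{k+1}\le \beta_k\gamma_k\big(1-2(\rho_{k+1}-\rho_k)/\rho_{k+1}\big)$ lets me shrink the leading coefficient without reversing the inequality. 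Combining the two displays gives the recursion $A_{k+1}\le A_k+\tfrac{\Lambda^2\gamma_k}{2\rho_k}+\tfrac{4\Lambda^2\beta_k\gamma_k(\rho_{k+1}-\rho_k)}{\rho_{k+1}^2}+\Delta_k(x^*)$.

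Summing this from $i=1$ to $k-1$, using $A_1=\tfrac12\|x^*-z_1\|^2\le\tfrac12 D_\cX^2$ (since $\beta_1=1$ and $z_1=x_1\in\cX$), $A_k\ge (\beta_k-1)\gamma_k\big(F_{\rho_k}(x_k)-F^*_{\rho_k}\big)$, and $\Delta_k(x^*)\le\gamma_k\|\delta_k\|\,\|x^*-z_{k+1}\|\le 2GD_\cX\gamma_k$ (the $\|z_{k+1}-z_k\|^2$ term being dropped because $\beta_k-L_{\rho_k}\gamma_k>0$, and $\|\delta_k\|\le 2G$ by Assumption~\ref{a2}), then dividing by $(\beta_k-1)\gamma_k$ yields the first inequality. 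For the in-expectation bound I would rerun the identical argument starting from \eqref{stoch-ineq}: every deterministic manipulation above (the $F^*$-splitting, bound (a), the penalty-shift inequality, and the sign fact $F_{\rho_{k+1}}(x_{k+1})\ge F^*_{\rho_{k+1}}$) holds pathwise and hence survives taking expectations, while $\Delta_k(x^*)$ is replaced by $\tfrac{\beta_k\gamma_k^2}{2(\beta_k-L_{\rho_k}\gamma_k)}\bbE[\|\delta_k\|^2]\le\gamma_k^2\sigma^2$, using $2L_{\rho_k}\gamma_k\le\beta_k$ and $\bbE[\|\delta_k\|^2]\le\sigma^2$; telescoping $\bar A_k:=(\beta_k-1)\gamma_k\bbE[F_{\rho_k}(x_k)-F^*_{\rho_k}]+\tfrac12\bbE[\|x^*-z_k\|^2]$ then gives the second inequality.

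I expect the penalty-shift step to be the main obstacle: inserting the squared feasibility bound (b) reintroduces an $\big(F_{\rho_{k+1}}(x_{k+1})-F^*_{\rho_{k+1}}\big)$ term on the ``wrong'' side of the recursion, and one must verify that it is exactly matched by the slack factor $1-2(\rho_{k+1}-\rho_k)/\rho_{k+1}$ built into the stepsize condition, with the nonnegativity $F_{\rho_{k+1}}(x_{k+1})\ge F^*_{\rho_{k+1}}$ being what legitimizes the subsequent coefficient reduction. The remaining ingredients — the refinement in (a) and the routine telescoping and constant-tracking — are straightforward.
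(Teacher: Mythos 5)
Your proposal is correct and follows essentially the same route as the paper's proof: test point $x=x^*$ in Lemma \ref{l-iter}, the shift from $F^*$ to $F^*_{\rho_k}$ costing $\gamma_k(F^*-F^*_{\rho_k})\le\gamma_k\Lambda^2/(2\rho_k)$ (the paper derives this inline via Young's inequality rather than as a standalone claim, but it is the same bound), the penalty-shift step via $F_{\rho_k}=F_{\rho_{k+1}}-\tfrac{\rho_{k+1}-\rho_k}{2}\|[c(\cdot)]_+\|^2$ combined with the squared form of \eqref{feasibility}, the coefficient reduction justified by $F_{\rho_{k+1}}(x_{k+1})\ge F^*_{\rho_{k+1}}$ and the stepsize condition, and telescoping with $\|\delta_i\|\le 2G$ (resp. $\bbE[\|\delta_i\|^2]\le\sigma^2$ and $\beta_i/(\beta_i-L_{\rho_i}\gamma_i)\le 2$). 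The constants and correction terms you obtain match \eqref{l3-0}–\eqref{l3-2} exactly.
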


\begin{proof}
Since $2L_{\rho_k}\gamma_k\leq\beta_k$ and $\beta_k\geq 1$ for all $k\geq 1$, the assumption of Lemma \ref{l-iter} holds.  Let $x^*$ and $x^*_k$ be an arbitrary optimal solution of problems \eqref{prob1} and \eqref{def-Frho1}, respectively. It then follows that $F^*_{\rho_k}=F_{\rho_k}(x_k^*)$, $F_{\rho_k}(x_k) \geq F^*_{\rho_k}$, $F_{\rho_{k+1}}^* \geq F^*_{\rho_k}$, and $F_{\rho_k}(x^*)=F^*$ for all $k\geq 1$, where $F^*$ is defined in \eqref{prob1}.  By these, \eqref{opt-gap}, \eqref{feasibility}, and \eqref{determ-ineq} with $x=x^*$, one has that for all $k\geq 1$, 
\begin{align}
&\frac{1}{2}\|x^*-z_k\|^2-\frac{1}{2}\|x^*-z_{k+1}\|^2+\Delta_k(x) \overset{\eqref{determ-ineq}}{\geq}\beta_k\gamma_k\big(F_{\rho_k}(x_{k+1})-F^*\big)-(\beta_k-1)\gamma_k\big(F_{\rho_k}(x_k)-F^*\big) \nn\\
&=\beta_k\gamma_k\big(F_{\rho_k}(x_{k+1})-F_{\rho_{k+1}}^*\big)-(\beta_k-1)\gamma_k\big(F_{\rho_k}(x_k)-F_{\rho_k}^*\big)+\beta_k\gamma_k\big(F_{\rho_{k+1}}^*-F^*\big)- (\beta_k-1)\gamma_k\big(F_{\rho_k}^*-F^*\big)\nn\\
&\geq\beta_k\gamma_k\big(F_{\rho_k}(x_{k+1})-F_{\rho_{k+1}}^*\big)-(\beta_k-1)\gamma_k\big(F_{\rho_k}(x_k)-F_{\rho_k}^*\big)+\gamma_k\big(F_{\rho_k}^*-F^*\big)\nn\\
&=\beta_k\gamma_k\big(F_{\rho_k}(x_{k+1})-F_{\rho_{k+1}}^*\big)-(\beta_k-1)\gamma_k\big(F_{\rho_k}(x_k)-F_{\rho_k}^*\big)+\gamma_k\big(F(x_k^*)-F^*\big)+\frac{\gamma_k\rho_k}{2}||[c(x_k^*)]_+\|^2\nn\\
&\overset{\eqref{opt-gap}}{\geq}\beta_k\gamma_k\big(F_{\rho_k}(x_{k+1})-F_{\rho_{k+1}}^*\big)-(\beta_k-1)\gamma_k\big(F_{\rho_k}(x_k)-F_{\rho_k}^*\big)- \gamma_k\Lambda\|[c(x_k^*)]_+\|+\frac{\gamma_k\rho_k}{2}\|[c(x_k^*)]_+\|^2\nn\\
&\geq\beta_k\gamma_k\big(F_{\rho_k}(x_{k+1})-F_{\rho_{k+1}}^*\big)-(\beta_k-1)\gamma_k\big(F_{\rho_k}(x_k)-F_{\rho_k}^*\big)- \frac{\gamma_k\Lambda^2}{2\rho_k}\nn\\
&=\beta_k\gamma_k\big(F_{\rho_{k+1}}(x_{k+1})-F_{\rho_{k+1}}^*\big)-(\beta_k-1)\gamma_k\big(F_{\rho_k}(x_k)-F_{\rho_k}^*\big)- \frac{\gamma_k\Lambda^2}{2\rho_k}-\frac{1}{2}\beta_k\gamma_k(\rho_{k+1}-\rho_k)\|[c(x_{k+1})]_+\|^2\nn\\
&\overset{\eqref{feasibility}}{\geq}\beta_k\gamma_k\big(F_{\rho_{k+1}}(x_{k+1})-F_{\rho_{k+1}}^*\big)-(\beta_k-1)\gamma_k\big(F_{\rho_k}(x_k)-F_{\rho_k}^*\big)- \frac{\gamma_k\Lambda^2}{2\rho_k}\nn\\
&\quad\,\,-\frac{1}{2}\beta_k\gamma_k(\rho_{k+1}-\rho_k)\left(\frac{2\Lambda}{\rho_{k+1}}+\sqrt{\frac{2(F_{\rho_{k+1}}(x_{k+1})-F_{\rho_{k+1}}^*)}{\rho_{k+1}}}\,\right)^2\nn\\
&\geq \beta_k\gamma_k\Big(1-\frac{2(\rho_{k+1}-\rho_k)}{\rho_{k+1}}\Big)\big(F_{\rho_{k+1}}(x_{k+1})-F_{\rho_{k+1}}^*\big)-(\beta_k-1)\gamma_k\big(F_{\rho_k}(x_k)-F_{\rho_k}^*\big)- \frac{\gamma_k\Lambda^2}{2\rho_k}\nn\\
&\quad-\frac{4\beta_k\gamma_k(\rho_{k+1}-\rho_k)\Lambda^2}{\rho_{k+1}^2}, \label{l3-0}
\end{align}
where the second inequality is due to $F_{\rho_{k+1}}^* \geq F^*_{\rho_k}$, the second equality follows from \eqref{def-Frho1} and the definition of $F$ in \eqref{prob1}, and the fourth inequality is due to Young's inequality, while the last inequality follows from the fact that $(a+b)^2 \leq 2(a^2+b^2)$ for all $a,b\in\rr$.

Recall that $0<(\beta_{k+1}-1)\gamma_{k+1}\leq\beta_k\gamma_k\big(1-2(\rho_{k+1}-\rho_k)/\rho_{k+1}\big)$ and $L_{\rho_k}\gamma_k-\beta_k<0$ for all $k\geq 1$. These, together with \eqref{delta}, \eqref{l3-0}, and $F_{\rho_{i}}(x_{i}) \geq F^*_{\rho_{i}}$ for all $i$, imply that for all $i\geq1$, 
\begin{align}
    &(\beta_{i+1}-1)\gamma_{i+1}(F_{\rho_{i+1}}(x_{i+1})-F^*_{\rho_{i+1}})+\frac{1}{2}\|x^*-z_{i+1}\|^2 \nn\\
    &\leq \beta_i\gamma_i\Big(1-\frac{2(\rho_{i+1}-\rho_i)}{\rho_{i+1}}\Big)(F_{\rho_{i+1}}(x_{i+1})-F^*_{\rho_{i+1}})+\frac{1}{2}\|x^*-z_{i+1}\|^2\nn\\
    &\overset{\eqref{delta}\eqref{l3-0}}{\leq} (\beta_i-1)\gamma_i(F_{\rho_i}(x_i)-F^*_{\rho_i})+\frac{1}{2}\|x^*-z_i\|^2+\gamma_i\langle \delta_i, x^*-z_{i+1}\rangle-\frac{\beta_i-L_{\rho_i}\gamma_i}{2\beta_i}\|z_{i+1}-z_i\|^2\nn\\
    &\qquad\quad+\frac{\gamma_i\Lambda^2}{2\rho_i}+\frac{4\beta_i\gamma_i(\rho_{i+1}-\rho_i)\Lambda^2}{\rho_{i+1}^2}\nn\\
    &\leq (\beta_i-1)\gamma_i(F_{\rho_i}(x_i)-F^*_{\rho_i})+\frac{1}{2}\|x^*-z_i\|^2 + \gamma_i\|\delta_i\|\|x^*-z_{i+1}\|+\frac{\gamma_i\Lambda^2}{2\rho_i}+\frac{4\beta_i\gamma_i(\rho_{i+1}-\rho_i)\Lambda^2}{\rho_{i+1}^2}.\label{l3-1}
\end{align}

Using \eqref{deltabound1}, $\beta_1=1$, $\|x^*-z_1\|\leq D_\cX$, and summing up \eqref{l3-1} from $i=1$ to $k-1$, we obtain that
\begin{align*}
    &(\beta_k-1)\gamma_k(F_{\rho_k}(x_k)-F^*_{\rho_k})\\
    &\leq (\beta_1-1)\gamma_1(F_{\rho_k}(x_1)-F^*_{\rho_k})+\frac{1}{2}\|x^*-z_1\|^2+\sum_{i=1}^{k-1}\gamma_i\|\delta_i\|\|x^*-z_i\|+\frac{\Lambda^2}{2}\sum_{i=1}^{k-1}\frac{\gamma_i}{\rho_i}+4\Lambda^2\sum_{i=1}^{k-1}\frac{\beta_i\gamma_i(\rho_{i+1}-\rho_i)}{\rho_{i+1}^2} \\
    &\leq \frac{D_\cX^2}{2}+2GD_\cX\sum_{i=1}^{k-1}\gamma_i+\frac{\Lambda^2}{2}\sum_{i=1}^{k-1}\frac{\gamma_i}{\rho_i}+4\Lambda^2\sum_{i=1}^{k-1}\frac{\beta_i\gamma_i(\rho_{i+1}-\rho_i)}{\rho_{i+1}^2},
\end{align*}
which, along with the assumption that $(\beta_k-1)\gamma_k>0$ for all $k>1$, implies that the first conclusion of this lemma holds.

We next prove that the second conclusion of this lemma holds. Using \eqref{stoch-ineq} and applying similar arguments as those used to derive \eqref{l3-0}, we can show that for all $k\geq 1$, 
\begin{align}
&\frac{1}{2}\bbE[\|x^*-z_k\|^2]-\frac{1}{2}\bbE[\|x^*-z_{k+1}\|^2]+\frac{\beta_k\gamma_k^2}{2(\beta_k-L_{\rho_k}\gamma_k)}\bbE[\|\delta_k\|^2]\nn \\ 
&\geq \beta_k\gamma_k\Big(1-\frac{2(\rho_{k+1}-\rho_k)}{\rho_{k+1}}\Big)\bbE[F_{\rho_{k+1}}(x_{k+1})-F_{\rho_{k+1}}^*]
-(\beta_k-1)\gamma_k\bbE[(F_{\rho_k}(x_k)-F_{\rho_k}^*]- \frac{\gamma_k\Lambda^2}{2\rho_k}\nn \\
&\quad-\frac{4\beta_k\gamma_k(\rho_{k+1}-\rho_k)\Lambda^2}{\rho_{k+1}^2}.\label{l3-2}
\end{align}
Also, by the assumption $2L_{\rho_k}\gamma_k\leq\beta_k$, one has $\beta_k/(\beta_k-L_{\rho_k}\gamma_k)\leq 2$. Using this, \eqref{l3-2}, and the assumption that $0<(\beta_{k+1}-1)\gamma_{k+1}\leq\beta_k\gamma_k(1-2(\rho_{k+1}-\rho_k)/\rho_{k+1})$ for all $k$,  we obtain that for all $i\geq 1$, 
\begin{align*}
&  (\beta_{i+1}-1)\gamma_{i+1}\bbE[F_{\rho_{i+1}}(x_{i+1})-F^*_{\rho_{i+1}}]+\frac{1}{2}\bbE[\|x^*-z_{i+1}\|^2] \\
&\leq \beta_i\gamma_i\Big(1-\frac{2(\rho_{i+1}-\rho_i)}{\rho_{i+1}}\Big) \bbE[F_{\rho_{i+1}}(x_{i+1})-F^*_{\rho_{i+1}}] +\frac{1}{2}\bbE[\|x^*-z_{i+1}\|^2] \\
    &\leq (\beta_i-1)\gamma_i\bbE[F_{\rho_i}(x_i)-F^*_{\rho_i}]
     +\frac{1}{2}\bbE[\|x^*-z_{i}\|^2]+\gamma_i^2\bbE[\|\delta_i\|^2]+\frac{\gamma_i\Lambda^2}{2\rho_i}+\frac{4\beta_i\gamma_i(\rho_{i+1}-\rho_i)\Lambda^2}{\rho_{i+1}^2}.
\end{align*}
Using $\beta_1=1$, $\bbE[\|\delta_i\|^2]\leq\sigma^2$, $\|x^*-z_1\|\leq D_\cX$, and summing up the above inequalities from $i=1$ to $k-1$, we have
\begin{equation*}
    (\beta_k-1)\gamma_k\bbE[F_{\rho_k}(x_k)-F^*_{\rho_k}]\leq\frac{1}{2}D_\cX^2+\sigma^2\sum_{i=1}^{k-1}\gamma_i^2+\frac{\Lambda^2}{2}\sum_{i=1}^{k-1}\frac{\gamma_i}{\rho_i}+4\Lambda^2\sum_{i=1}^{k-1}\frac{\beta_i\gamma_i(\rho_{i+1}-\rho_i)}{\rho_{i+1}^2},
\end{equation*}
which, along with the assumption that $(\beta_k-1)\gamma_k>0$ for all $k>1$,  implies that the second conclusion of this lemma holds. 
\end{proof}

We are now ready to prove Theorem \ref{thm:composite}.

\begin{proof}[\textbf{Proof of Theorem \ref{thm:composite}}]
Notice that $\rho_k=(k+4)^{3/2}$, $\beta_k=(k+4)/5$, $\gamma_k=(k+4)/(10L_{\rho_k})$, and $L_{\rho_k}=L_{\nabla f}+(k+4)^{3/2}L_{\nabla c^2}$. Also, one can verify that  $3(k+4)^2 \leq (k+5)\big((k+4)^2-k(k+5)\big)$, and  
\beq \label{aux-ineq1}
(k+5)^{3/2}-(k+4)^{3/2} \leq \frac{3}{2} (k+5)^{1/2} \ \ (\text{due to the convexity of}\  \tau^{3/2}). 
\eeq
Using these and $L_{\rho_{k+1}} \geq L_{\rho_k}$, we have
\begin{align*}
\frac{2\beta_k\gamma_k(\rho_{k+1}-\rho_k)}{\rho_{k+1}}&=\frac{2(k+4)^2((k+5)^{3/2}-(k+4)^{3/2})}{50L_{\rho_k}(k+5)^{3/2}}
\leq\frac{3(k+4)^2}{50L_{\rho_k}(k+5)}
\leq\frac{(k+4)^2}{50L_{\rho_k}}-\frac{k(k+5)}{50L_{\rho_k}} \\
&\leq\frac{(k+4)^2}{50L_{\rho_k}}-\frac{k(k+5)}{50L_{\rho_{k+1}}}=\beta_k\gamma_k-(\beta_{k+1}-1)\gamma_{k+1}.
\end{align*}
Hence, the assumption of Lemma \ref{l-sum} holds for such $\rho_k$, $\beta_k$, and $\gamma_k$.  In addition, one can verify that
\beq\label{aux-ineq2}
\sum_{i=1}^{k-1}(i+4)^{-1/2}\leq2(k+4)^{1/2}, \quad \sum_{i=1}^{k-1}(i+4)^{-2} \leq 1, \quad \sum_{i=1}^{k-1}(i+4)^{-1} \leq \log(k+4).
\eeq 
By \eqref{aux-ineq1}, \eqref{aux-ineq2},  Lemma \ref{l-sum}, and the definitions of $\tC_1$, $\tC_2$ in  \eqref{c12}, one has that for all $k\geq2$, 
\begin{align}
    &F_{\rho_k}(x_k)-F_{\rho_k}^*\leq\frac{1}{(\beta_k-1)\gamma_k}\Big(\frac{1}{2}D_\cX^2+\frac{\Lambda^2}{2}\sum_{i=1}^{k-1}\frac{\gamma_i}{\rho_i}+4\Lambda^2\sum_{i=1}^{k-1}\frac{\beta_i\gamma_i(\rho_{i+1}-\rho_i)}{\rho_{i+1}^2}+2GD_\cX\sum_{i=1}^{k-1}\gamma_i\Big) \nn\\
    &=\frac{50L_{\rho_k}}{(k-1) (k+4)}\Big(\frac{1}{2}D_\cX^2+\frac{\Lambda^2}{2}\sum_{i=1}^{k-1}\frac{i+4}{10(L_{\nabla f}+(i+4)^{3/2}L_{\nabla c^2})(i+4)^{3/2}} \nn \\
&\qquad\qquad\qquad\qquad+4\Lambda^2\sum_{i=1}^{k-1}\frac{(i+4)^2((i+5)^{3/2}-(i+4)^{3/2})}{50(L_{\nabla f}+(i+4)^{3/2}L_{\nabla c^2})(i+5)^{3}}+2GD_\cX\sum_{i=1}^{k-1}\frac{i+4}{10(L_{\nabla f}+(i+4)^{3/2}L_{\nabla c^2})}\Big)\nn\\
    &\overset{\eqref{aux-ineq1}}{\leq}\frac{50L_{\rho_k}}{(k-1) (k+4)}\Big(\frac{1}{2}D_\cX^2+\frac{\Lambda^2}{20L_{\nabla c^2}}\sum_{i=1}^{k-1}\frac{1}{(i+4)^{2}}+\frac{3\Lambda^2}{25L_{\nabla c^2}}\sum_{i=1}^{k-1}\frac{1}{(i+4)^{2}}+\frac{GD_\cX}{5L_{\nabla c^2}}\sum_{i=1}^{k-1}\frac{1}{(i+4)^{1/2}}\Big)\nn\\
   &\overset{\eqref{aux-ineq2}}{\leq}\frac{50L_{\rho_k}}{(k-1) (k+4)}\Big(\frac{1}{2}D_\cX^2+\frac{\Lambda^2}{20L_{\nabla c^2}}+\frac{3\Lambda^2}{25L_{\nabla c^2}}+\frac{2GD_\cX}{5L_{\nabla c^2}}(k+4)^{\frac{1}{2}}\Big)\nn\\
    & \overset{\eqref{c12}}{\leq}\frac{\tC_1L_{\rho_k}}{(k-1) (k+4)}+\frac{20GD_\cX\big(L_{\nabla f}+(k+4)^{3/2}L_{\nabla c^2}\big)(k+4)^{1/2}}{L_{\nabla c^2}(k-1)(k+4)}
   \  \leq \ \frac{\tC_1L_{\rho_k}}{(k-1) (k+4)}+\tC_2,\label{l4-1}
\end{align}
where the last inequality follows from $k\geq2$ and \eqref{c12}. Applying similar arguments as those used to derive \eqref{l4-1},  we can show that for all $k\geq2$, 
\begin{align}
    &\bbE[F_{{\rho_k}}(x_k)-F_{{\rho_k}}^*]\leq \frac{1}{(\beta_k-1)\gamma_k}\Big(\frac{1}{2}D_\cX^2+\sigma^2\sum_{i=1}^{k-1}\gamma_i^2+4\Lambda^2\sum_{i=1}^{k-1}\frac{\beta_i\gamma_i(\rho_{i+1}-\rho_i)}{\rho_{i+1}^2}+\frac{\Lambda^2}{2}\sum_{i=1}^{k-1}\frac{\gamma_i}{\rho_i}\Big)\nn\\
    &\leq \frac{\tC_1L_{\rho_k}}{(k-1) (k+4)}+\frac{\sigma^2L_{\rho_k}}{2(k-1) (k+4)}\sum_{i=1}^{k-1}\frac{(i+4)^2}{(L_{\nabla f}+(i+4)^{3/2}L_{\nabla c^2})^2}\nn\\
    &\leq\frac{\tC_1L_{\rho_k}}{(k-1) (k+4)}+\frac{\sigma^2L_{\rho_k}}{2L_{\nabla c^2}^2(k-1) (k+4)}\sum_{i=1}^{k-1}\frac{1}{i+4}
    \overset{\eqref{aux-ineq2}}{\leq}\frac{\tC_1L_{\rho_k}}{(k-1) (k+4)}+\frac{\sigma^2L_{\rho_k}\log(k+4)}{2L_{\nabla c^2}^2(k-1) (k+4)}.\label{l4-2}
\end{align}
Using  \eqref{feasibility}, \eqref{l4-1}, and the expressions of $\rho_k$ and $L_{\rho_k}$ in \eqref{def-para1} and \eqref{L-rho}, we have
\begin{align*}
    &\|[c(x_k)]_+\|
     \overset{\eqref{feasibility}}{\leq} \frac{2\Lambda}{\rho_k}+\sqrt{\frac{2(F_{{\rho_k}}(x_k)-F_{{\rho_k}}^*)}{{\rho_k}}} 
    \overset{\eqref{l4-1}}{\leq} \frac{2\Lambda}{{\rho_k}}+\sqrt{\frac{2}{{\rho_k}}\left(\frac{\tC_1L_{\rho_k}}{(k-1) (k+4)}+\tC_2\right)} \nn \\
    &\overset{\eqref{def-para1}\eqref{L-rho}}{=}\frac{2\Lambda}{(k+4)^{3/2}}+\sqrt{\frac{2\tC_1(L_{\nabla f}+(k+4)^{3/2}L_{\nabla c^2})}{(k-1)(k+4)^{5/2}}+\frac{2\tC_2}{(k+4)^{3/2}}}, \\
    &\leq\frac{2\Lambda}{(k+4)^{3/2}}+\sqrt{\frac{2\tC_1L_{\nabla f}}{(k-1)(k+4)^{5/2}}}+\sqrt{\frac{2\tC_1L_{\nabla c^2}}{(k-1)(k+4)}}+\sqrt{\frac{2\tC_2}{(k+4)^{3/2}}},
\end{align*}
which,  along with $k-1 \geq k/2$ for all $k\geq 2$,  implies that \eqref{thm1-ineq1} holds. In addition, by \eqref{l4-2}, and the expressions of $\rho_k$ and $L_{\rho_k}$ in \eqref{def-para1} and \eqref{L-rho}, one has
\begin{align}
    &\bbE[F_{{\rho_k}}(x_k)-F_{{\rho_k}}^*]\overset{\eqref{l4-2}}{\leq} \frac{\tC_1L_{\rho_k}}{(k-1) (k+4)}+\frac{\sigma^2L_{\rho_k}\log(k+4)}{2L_{\nabla c^2}^2(k-1) (k+4)}\nn\\
    &\overset{\eqref{def-para1}\eqref{L-rho}}{=}\frac{\tC_1L_{\nabla f} }{(k-1) (k+4)}+\frac{\tC_1L_{\nabla c^2} (k+4)^{1/2}}{k-1}+\frac{\sigma^2L_{\nabla f}\log(k+4)}{2L_{\nabla c^2}^2(k-1) (k+4)}+\frac{\sigma^2L_{\nabla c^2}(k+4)^{1/2}\log(k+4)}{2L_{\nabla c^2}^2(k-1)}\label{l4-3} \\
& \leq 2\tC_1L_{\nabla f}k^{-2}+2\sqrt{3}\tC_1L_{\nabla c^2}k ^{-1/2}+3\sigma^2L_{\nabla f}L_{\nabla c^2}^{-2}k^{-2}\log k+3\sqrt{3}\sigma^2L_{\nabla c^2}^{-1}k^{-1/2}\log k, \label{l4-4}
\end{align}
where the last inequality is due to the fact that $k-1 \geq k/2$, $k+4\leq 3k$, and $\log(k+4)\leq 3\log k$ for all $k\geq2$. The relation \eqref{thm1-ineq2} then  follows from \eqref{l4-4} and $\bbE[F(x_k)-F^*]\leq\bbE[F_{\rho_k}(x_k)-F_{\rho_k}^*]$ due to \eqref{opt-gap}.

Lastly, it follows from \eqref{def-para1}, \eqref{feasibility}, and \eqref{l4-3} that
\begin{align*}
    &\bbE[\|[c(x_k)]_+\|]  \overset{\eqref{feasibility}}
    {\leq} \frac{2\Lambda}{\rho_k}+\bbE\left[\sqrt{\frac{2F_{\rho_k}(x_k)-F_{\rho_k}^*}{\rho_k}}\,\right] 
    \leq\frac{2\Lambda}{\rho_k}+\sqrt{\frac{2\bbE\left[F_{\rho_k}(x_k)-F_{\rho_k}^*\right]}{\rho_k}} \\
   &  \overset{\eqref{def-para1}\eqref{l4-4}}\leq \frac{2\Lambda}{(k+4)^{3/2}}+\sqrt{\frac{2\tC_1L_{\nabla f} }{(k-1) (k+4)^{5/2}}}+\sqrt{\frac{2\tC_1L_{\nabla c^2} }{(k-1)(k+4)}}+\sqrt{\frac{\sigma^2L_{\nabla f}\log(k+4)}{L_{\nabla c^2}^2(k-1) (k+4)^{5/2}}} \\
&\qquad\,\,\,+\sqrt{\frac{\sigma^2L_{\nabla c^2}\log(k+4)}{L_{\nabla c^2}^2(k-1)(k+4)}}. \\
& \leq 2\Lambda k^{-3/2}+2(\tC_1L_{\nabla f})^{1/2}k^{-7/4}+2(\tC_1L_{\nabla c^2})^{1/2}k^{-1}+\sigma L_{\nabla c^2}^{-1} (6L_{\nabla f})^{1/2} k^{-7/4}(\log k)^{1/2} \nn \\
    &\quad\, +\sqrt{6}\sigma (L_{\nabla c^2})^{-1/2} k^{-1}(\log k)^{1/2},
\end{align*}
where the last inequality is due to the fact that $k-1 \geq k/2$ and $\log(k+4)\leq 3\log k$ for all $k\geq2$.
This together with \eqref{opt-gap} implies that  \eqref{thm1-ineq3} holds.
\end{proof}

\subsection{Proof for the main results in Section \ref{sec:finite-sum}}\label{sec:proof-finitesum}

In this subsection, we first establish several technical lemmas and then use them together with Lemma \ref{l-penalty} to prove Theorems  \ref{thm:finite-sum_fixed} and \ref{thm:finite-sum}.

Recall that $L_{\nabla c^2}$ and $L_{\nabla \barf}$ are defined in \eqref{L_nablac} and \eqref{Lam}, respectively. For notational convenience, we  define
\begin{equation}\label{notation}
\begin{aligned}
    &f_{\rho}(x) := \barf(x) + \frac{\rho}{2}\|[c(x)]_+\|^2 ,
    \qquad L_{\rho}:=L_{\nabla \barf}+\rho L_{\nabla c^2}, \\
    &\ell_{f_{\rho}}(u, v):=f_{\rho}(v)+\langle \nabla f_{\rho}(v), u-v\rangle
\end{aligned}
\end{equation}
for any $\rho>0$. Clearly, it follows from these and Assumption \ref{a3} that $\barf$ is $L_{\nabla \barf}$-smooth, and $f_{\rho}$ is $L_{\rho}$-smooth.  

The following lemma shows that $g_t$ is an unbiased stochastic estimator of $\nabla f_{\rho_k}(y_t)$ and also provides an upper bound on its variance. 

\begin{lemma}
Suppose that Assumptions \ref{a1} and \ref{a3} hold. Let $\{g_t\}$ and $\{y_t\}$ be generated in the $k$th outer iteration of Algorithm \ref{alg2} with $\{q_i\}$ given in \eqref{def-para3}.
Then for all $1\leq t\leq T_k$, we have
\begin{align}
    \bbE[\delta_t|\Xi_{k,t-1}]&=0, \label{mean}\\ 
    \bbE[\|\delta_t\|^2|\Xi_{k,t-1}]&\leq 2L_{\nabla \barf}(\barf(\tilde x_{k}) - \barf(y_t)-\langle\nabla \barf(y_t),\tilde x_{k} - y_t \rangle),\label{var}
\end{align}
where $\barf$ is given in \eqref{fun-f}, $L_{\nabla \barf}$ is defined in \eqref{Lam}, $\delta_t = g_t-\nabla f_{\rho_k}(y_t)$ with $f_{\rho}$ given in \eqref{notation}, and $\Xi_{k,t-1}$ denotes the collection of all samples that have been generated by Algorithm \ref{alg2} before the $t$-th inner iteration of the $k$th outer iteration.
\end{lemma}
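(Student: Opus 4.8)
The plan is to expand $\delta_t=g_t-\nabla f_{\rho_k}(y_t)$ explicitly, observe that the penalty-gradient term cancels, and then treat \eqref{mean} and \eqref{var} separately using only the sampling rule $\mathcal{Q}$ and the convexity and $L_i$-smoothness of each $f_i$.

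First I would note that, by the definition of $g_t$ in Algorithm~\ref{alg2} and of $f_{\rho_k}$ in \eqref{notation} (so that $\nabla f_{\rho_k}(y_t)=\nabla\barf(y_t)+\rho_k\nabla c(y_t)[c(y_t)]_+$), the term $\rho_k\nabla c(y_t)[c(y_t)]_+$ appears in both $g_t$ and $\nabla f_{\rho_k}(y_t)$ and hence cancels:
\[
\delta_t=\frac{\nabla f_{i_t}(y_t)-\nabla f_{i_t}(\tilde x_k)}{q_{i_t}s}+\nabla\barf(\tilde x_k)-\nabla\barf(y_t).
\]
Conditioning on $\Xi_{k,t-1}$ fixes $y_t$ and $\tilde x_k$, leaving $i_t$ as the only source of randomness. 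Since $q_i=L_i/\sum_j L_j>0$ and $\barf=s^{-1}\sum_i f_i$, a direct computation gives
\[
\bbE\Big[\tfrac{\nabla f_{i_t}(y_t)-\nabla f_{i_t}(\tilde x_k)}{q_{i_t}s}\,\Big|\,\Xi_{k,t-1}\Big]=\sum_{i=1}^s q_i\cdot\frac{\nabla f_i(y_t)-\nabla f_i(\tilde x_k)}{q_i s}=\nabla\barf(y_t)-\nabla\barf(\tilde x_k),
\]
which yields \eqref{mean}.

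For \eqref{var}, I would set $X:=(\nabla f_{i_t}(y_t)-\nabla f_{i_t}(\tilde x_k))/(q_{i_t}s)$, so that $\delta_t=X-\bbE[X\mid\Xi_{k,t-1}]$ by the above. Using $\bbE[\|X-\bbE X\|^2]\le\bbE[\|X\|^2]$ and then $q_i=L_i/\sum_j L_j$,
\[
\bbE[\|\delta_t\|^2\mid\Xi_{k,t-1}]\le\bbE[\|X\|^2\mid\Xi_{k,t-1}]=\sum_{i=1}^s\frac{\|\nabla f_i(y_t)-\nabla f_i(\tilde x_k)\|^2}{q_i s^2}=\frac{\sum_{j}L_j}{s^2}\sum_{i=1}^s\frac{\|\nabla f_i(y_t)-\nabla f_i(\tilde x_k)\|^2}{L_i}.
\]
Next I would invoke the standard co-coercivity bound for a convex $L_i$-smooth $f_i$, namely $\|\nabla f_i(y_t)-\nabla f_i(\tilde x_k)\|^2\le 2L_i\big(f_i(\tilde x_k)-f_i(y_t)-\langle\nabla f_i(y_t),\tilde x_k-y_t\rangle\big)$ (the points $y_t,\tilde x_k$ being in $\cX$). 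Substituting this, the $L_i$ in the denominator cancels, and summing over $i$ via $\sum_i f_i=s\barf$ and $\sum_i\nabla f_i=s\nabla\barf$ collapses the right-hand side to
\[
\bbE[\|\delta_t\|^2\mid\Xi_{k,t-1}]\le\frac{2\sum_{j}L_j}{s}\big(\barf(\tilde x_k)-\barf(y_t)-\langle\nabla\barf(y_t),\tilde x_k-y_t\rangle\big)=2L_{\nabla\barf}\big(\barf(\tilde x_k)-\barf(y_t)-\langle\nabla\barf(y_t),\tilde x_k-y_t\rangle\big),
\]
using $L_{\nabla\barf}=\sum_j L_j/s$ from \eqref{Lam}, which is exactly \eqref{var}.

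I do not anticipate a genuine obstacle here: this is a routine importance-sampled SVRG variance estimate. The only points needing care are the cancellation of the $\rho_k$-term in $\delta_t$ and keeping track of the sampling weights so that the choice $q_i\propto L_i$ exactly balances the factor $L_i$ produced by the smooth–convex inequality; any other distribution $\mathcal{Q}$ would leave a $\max_i L_i$- or $\sum_i L_i^2$-type constant instead of the clean $L_{\nabla\barf}$, so the statement is really tailored to this particular $\mathcal{Q}$.
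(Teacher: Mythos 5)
Your proof is correct and follows essentially the same route as the paper: the paper writes the same decomposition $\delta_t=(\nabla f_{i_t}(y_t)-\nabla f_{i_t}(\tilde x_k))/(q_{i_t}s)+\nabla\barf(\tilde x_k)-\nabla\barf(y_t)$ (with the penalty term cancelling) and then defers the unbiasedness and variance bound to the standard importance-sampled SVRG argument of Lan et al.'s Lemma 3, which is exactly the computation you carry out in full (conditional mean, $\bbE\|X-\bbE X\|^2\le\bbE\|X\|^2$, co-coercivity of each convex $L_i$-smooth $f_i$, and the cancellation from $q_i\propto L_i$).
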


\begin{proof}
In view of \eqref{notation} and the expressions of $g_t$ and $\delta_t$, one can observe that 
\[
\delta_t=g_t-\nabla f_{\rho_k}(y_t)=(\nabla f_{i_t}(y_t)-\nabla f_{i_t}(\tilde x_{k}))/(q_{i_t}s)+\nabla \barf(\tilde x_{k})- \nabla \barf(y_t).
\]
The rest proof of this lemma follows from this relation and similar arguments used in the proof of \cite[Lemma 3]{lan2019unified}.
\end{proof}

The next lemma presents a well-known result for an inexact proximal gradient step applied to the problem $\min_x\{ f_{\rho_k}(x)+\psi(x)\}$, whose proof is similar to that of \cite[Lemma 5]{lan2019unified}.

\begin{lemma}
Suppose that Assumptions \ref{a1} and \ref{a3} hold.  Let $\{g_t\}$, $\{y_t\}$, $\{z_t\}$ be generated in the $k$th outer iteration of Algorithm \ref{alg2}, and $\delta_t = g_t-\nabla f_{{\rho_k}}(y_t)$, where $f_{{\rho_k}}$ is defined in \eqref{notation}. 
Then for any $x\in\cX$ and $1\leq t\leq T_k$, we have
\begin{equation}\label{l6-1}
    \gamma_k\left(\ell_{f_{\rho_k}}(z_t, y_t)-\ell_{f_{\rho_k}}(x,y_t )+\psi(z_t)-\psi(x)\right)\leq \frac{1}{2}\|z_{t-1}-x\|^2-\frac{1}{2}\|z_{t}-x\|^2-\frac{1}{2}\|z_{t-1}-z_t\|^2-\gamma_k\langle \delta_t, z_t-x\rangle,
\end{equation}
where $\ell_{f_{\rho_k}}$ is defined in \eqref{notation}.
\end{lemma}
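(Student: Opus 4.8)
The plan is to obtain \eqref{l6-1} directly from the optimality of $z_t$ as the minimizer of a $1$-strongly convex function, which is the standard argument for an inexact proximal gradient step (\cf \cite[Lemma 5]{lan2019unified}).

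First I would note that, by its definition in Algorithm \ref{alg2}, $z_t$ is the minimizer over $\cX$ of
\[
h(x) := \gamma_k\big(\langle g_t, x\rangle + \psi(x)\big) + \tfrac{1}{2}\|x - z_{t-1}\|^2 ,
\]
which is $1$-strongly convex because of the quadratic term $\tfrac12\|x-z_{t-1}\|^2$, the remaining terms $\gamma_k\langle g_t,x\rangle$ and $\gamma_k\psi(x)$ being convex. Since $z_t$ minimizes the $1$-strongly convex function $h$ over the convex set $\cX$, we have $h(x) \ge h(z_t) + \tfrac12\|x - z_t\|^2$ for every $x\in\cX$. Writing this out and cancelling the common quadratic terms in $z_{t-1}$ yields, for all $x\in\cX$,
\[
\gamma_k\langle g_t, z_t - x\rangle + \gamma_k\big(\psi(z_t) - \psi(x)\big) \;\le\; \tfrac{1}{2}\|x - z_{t-1}\|^2 - \tfrac{1}{2}\|z_t - z_{t-1}\|^2 - \tfrac{1}{2}\|x - z_t\|^2 .
\]

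Next I would decompose $g_t = \nabla f_{\rho_k}(y_t) + \delta_t$ (the definition of $\delta_t$), so that $\langle g_t, z_t - x\rangle = \langle \nabla f_{\rho_k}(y_t), z_t - x\rangle + \langle \delta_t, z_t - x\rangle$, and then use the definition of $\ell_{f_{\rho_k}}$ in \eqref{notation}: since
\[
\ell_{f_{\rho_k}}(z_t, y_t) - \ell_{f_{\rho_k}}(x, y_t) = \langle \nabla f_{\rho_k}(y_t), z_t - x\rangle ,
\]
(the terms $f_{\rho_k}(y_t)$ and $\langle \nabla f_{\rho_k}(y_t), y_t\rangle$ cancelling in the difference), substituting both relations into the previous display and moving $\gamma_k\langle \delta_t, z_t - x\rangle$ to the right-hand side gives precisely \eqref{l6-1}.

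There is no real obstacle here; the proof is purely the optimality condition plus algebra. The one point deserving attention is tracking the strong-convexity modulus of $h$: in the parametrization of Algorithm \ref{alg2} the proximal term is $\tfrac12\|x-z_{t-1}\|^2$ (not $\tfrac{1}{2\gamma_k}\|x-z_{t-1}\|^2$), so $h$ is $1$-strongly convex, which is what produces the coefficients $\tfrac12$ in front of $\|z_t-z_{t-1}\|^2$ and $\|x-z_t\|^2$ in \eqref{l6-1}.
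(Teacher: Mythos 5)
Your proof is correct and is exactly the argument the paper intends: the paper omits the proof, citing \cite[Lemma 5]{lan2019unified}, which is precisely this three-point inequality obtained from the $1$-strong convexity of the prox subproblem, followed by the decomposition $g_t=\nabla f_{\rho_k}(y_t)+\delta_t$ and the cancellation in $\ell_{f_{\rho_k}}(z_t,y_t)-\ell_{f_{\rho_k}}(x,y_t)$. Your remark about the modulus being $1$ (rather than $1/\gamma_k$) due to the parametrization of the prox step in Algorithm \ref{alg2} is the right detail to check.
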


The following lemma establishes some recursions for 
$F_{{\rho_k}}(x_t)-F_{{\rho_k}}(x)$ and $\bbE[F_{{\rho_k}}(x_t)-F_{{\rho_k}}(x)]$ for all $x\in\cX$.

\begin{lemma}\label{l-iter2}
Suppose that Assumptions \ref{a1} and \ref{a3} hold. Let $\{x_t\}$, $\{y_t\}$, $\{z_t\}$,  $\{g_t\}$, and $\tilde x_{k}$ be generated in the $k$th outer iteration of Algorithm \ref{alg2} with $\{q_i\}$ given in \eqref{def-para3}, and $\alpha_k$, $\gamma_k$, $p_k$ and $\rho_k$ satisfying
\begin{align}
    &1-L_{\rho_k}\alpha_k\gamma_k>0, \label{l7-1}\\
    &p_k-\frac{L_{\nabla \barf}\alpha_k \gamma_k}{1-L_{\rho_k} \alpha_k\gamma_k}\geq 0 \label{l7-2}
\end{align}
for all $k\geq1$, and let $\delta_t = g_t-\nabla f_{\rho_k}(y_t)$, where $L_{\rho_k}$ and $f_{\rho_k}$ are given in \eqref{notation}. Then for all $k\geq 1$, $1\leq t\leq T_k$, and $x\in\cX$, we have
\begin{align}
    \frac{\gamma_k}{\alpha_k}(F_{{\rho_k}}(x_t)-F_{{\rho_k}}(x))+\frac{1}{2}\|z_t-x\|^2 &\leq \frac{\gamma_k}{\alpha_k}(1-\alpha_k-p_k)(F_{{\rho_k}}(x_{t-1})-F_{{\rho_k}}(x))+\frac{\gamma_kp_k}{\alpha_k} (F_{{\rho_k}}(\tilde x_{k})-F_{{\rho_k}}(x))\nn\\
    &\quad+\frac{1}{2}\|z_{t-1}-x\|^2 -\gamma_k\langle \delta_t, z_t-x\rangle, \label{l7-exact}\\
    \frac{\gamma_k}{\alpha_k}\bbE[F_{{\rho_k}}(x_t)-F_{{\rho_k}}(x)]+\frac{1}{2}\bbE[\|z_t-x\|^2] &\leq \frac{\gamma_k}{\alpha_k}(1-\alpha_k-p_k)\bbE[F_{{\rho_k}}(x_{t-1})-F_{{\rho_k}}(x)]+\frac{\gamma_kp_k}{\alpha_k} \bbE[F_{{\rho_k}}(\tilde x_{k})-F_{{\rho_k}}(x)]\nn\\
    &\quad+\frac{1}{2}\bbE[\|z_{t-1}-x\|^2] , \label{l7-exp}
\end{align}
where  $F_{\rho_k}$ is defined in \eqref{penalty-prob}.
\end{lemma}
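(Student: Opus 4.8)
The plan is to run the standard accelerated variance‑reduced potential‑function argument (in the spirit of \cite{lan2019unified}), but to keep the smooth average $\barf$ and the penalty term $\tfrac{\rho_k}{2}\|[c(\cdot)]_+\|^2$ separate throughout, so that the variance estimate \eqref{var}, which only sees $\barf$, can be matched against the residual it is meant to cancel. Throughout I use that the $y_t$- and $x_t$-updates are genuine convex combinations of $x_{t-1},z_{t-1},\tilde x_k\in\cX$ (which follows from the parameter choices), so $y_t,x_t\in\cX$ and $\psi$ splits by convexity, and that $f_{\rho_k}$ is convex and $L_{\rho_k}$-smooth.

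First I would use the identity $x_t-y_t=\alpha_k(z_t-z_{t-1})$ (immediate from the updates) together with $L_{\rho_k}$-smoothness of $f_{\rho_k}$ to get $f_{\rho_k}(x_t)\le \ell_{f_{\rho_k}}(x_t,y_t)+\tfrac{L_{\rho_k}\alpha_k^2}{2}\|z_t-z_{t-1}\|^2$, then split the affine map $\ell_{f_{\rho_k}}(\cdot,y_t)$ along the convex combination defining $x_t$, and bound $\psi(x_t)$ the same way. The $x_{t-1}$-piece is handled by convexity of $f_{\rho_k}$ ($\le F_{\rho_k}(x_{t-1})$); the $z_t$-piece by the inexact prox inequality \eqref{l6-1} together with $\ell_{f_{\rho_k}}(x,y_t)\le f_{\rho_k}(x)$, which produces $F_{\rho_k}(x)$, the prox telescoping terms $\tfrac1{2}\|z_{t-1}-x\|^2-\tfrac1{2}\|z_t-x\|^2-\tfrac1{2}\|z_{t-1}-z_t\|^2$, and $-\langle\delta_t,z_t-x\rangle$. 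For the $\tilde x_k$-piece I write $f_{\rho_k}=\barf+\tfrac{\rho_k}{2}\|[c(\cdot)]_+\|^2$, bound the linearization of the convex penalty at $y_t$ by its value at $\tilde x_k$, and retain the Bregman-type remainder, obtaining $p_k(\ell_{f_{\rho_k}}(\tilde x_k,y_t)+\psi(\tilde x_k))\le p_kF_{\rho_k}(\tilde x_k)-p_kB_t$ with $B_t:=\barf(\tilde x_k)-\barf(y_t)-\langle\nabla\barf(y_t),\tilde x_k-y_t\rangle\ge0$. Assembling these, subtracting $F_{\rho_k}(x)$, and multiplying by $\gamma_k/\alpha_k$ yields the right-hand side of \eqref{l7-exact}/\eqref{l7-exp} plus three extra terms: $-\tfrac1{2}(1-L_{\rho_k}\alpha_k\gamma_k)\|z_t-z_{t-1}\|^2$, $-\gamma_k\langle\delta_t,z_t-x\rangle$, and $-\tfrac{\gamma_kp_k}{\alpha_k}B_t$.

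For \eqref{l7-exact} I discard the first term (nonpositive by \eqref{l7-1}) and the third ($B_t\ge0$), keeping $-\gamma_k\langle\delta_t,z_t-x\rangle$; note \eqref{l7-2} is not needed here. For \eqref{l7-exp} I must not discard the $\|z_t-z_{t-1}\|^2$ term: I split $\langle\delta_t,z_t-x\rangle=\langle\delta_t,z_t-z_{t-1}\rangle+\langle\delta_t,z_{t-1}-x\rangle$ and apply Young's inequality to $-\gamma_k\langle\delta_t,z_t-z_{t-1}\rangle$ with the precise weight $1-L_{\rho_k}\alpha_k\gamma_k$, so the $\|z_t-z_{t-1}\|^2$ terms cancel exactly and only $\tfrac{\gamma_k^2}{2(1-L_{\rho_k}\alpha_k\gamma_k)}\|\delta_t\|^2$ survives. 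Taking $\bbE[\,\cdot\mid\Xi_{k,t-1}]$, the term $\langle\delta_t,z_{t-1}-x\rangle$ vanishes by \eqref{mean} (both $z_{t-1}$ and the fixed $x$ are $\Xi_{k,t-1}$-measurable), and by \eqref{var} the variance residual is $\le\tfrac{L_{\nabla\barf}\gamma_k^2}{1-L_{\rho_k}\alpha_k\gamma_k}B_t$; combined with $-\tfrac{\gamma_kp_k}{\alpha_k}B_t$ the net coefficient of $B_t$ is $\tfrac{\gamma_k}{\alpha_k}\big(\tfrac{L_{\nabla\barf}\alpha_k\gamma_k}{1-L_{\rho_k}\alpha_k\gamma_k}-p_k\big)\le0$ by exactly \eqref{l7-2}, so it is dropped. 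A final use of the tower property gives \eqref{l7-exp}.

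The point requiring care — effectively the only nonroutine step — is the expectation argument: one must resist throwing away the $\|z_t-z_{t-1}\|^2$ term and instead spend it, via Young's inequality with exactly the weight $1-L_{\rho_k}\alpha_k\gamma_k$, to absorb the martingale-difference cross term $\langle\delta_t,z_t-z_{t-1}\rangle$; a cruder bound would force a strictly stronger stepsize restriction than \eqref{l7-2}. Equally important is the separate treatment of $\barf$ and the penalty, so that the surviving Bregman term $B_t$ is precisely the quantity controlled by \eqref{var}. Everything else — convexity of $f_{\rho_k}$, measurability of $z_{t-1}$, and the convex-combination identity $x_t-y_t=\alpha_k(z_t-z_{t-1})$ — is routine.
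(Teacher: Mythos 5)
Your proposal is correct and follows essentially the same route as the paper's proof: the same convex-combination/smoothness decomposition, the same use of the prox inequality \eqref{l6-1}, the same Young's inequality with weight $1-L_{\rho_k}\alpha_k\gamma_k$ to absorb $\langle\delta_t,z_t-z_{t-1}\rangle$, and the same matching of the variance bound \eqref{var} against the penalty-separated Bregman remainder via \eqref{l7-2}. The only cosmetic difference is that you extract the Bregman term $B_t$ when first bounding $\ell_{f_{\rho_k}}(\tilde x_k,y_t)$, whereas the paper carries $\ell_{f_{\rho_k}}(\tilde x_k,y_t)$ forward and performs the identical algebra in its final step.
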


\begin{proof}
Let $\{x_t\}$, $\{y_t\}$ and $\{z_t\}$ be generated in the $k$th outer iteration of Algorithm \ref{alg2}. Then it follows from the expressions of $x_t$ and $y_t$ in Algorithm \ref{alg2} that $ x_t-y_t=\alpha_k(z_t-z_{t-1})$.
By this, \eqref{l6-1}, the $L_{\rho_k}$-smoothness of $f_{{\rho_k}}$, and the definitions of $x_t$ and $\ell_{f_{\rho_k}}$, one has
\begin{align}
    &f_{\rho_k}(x_t)\leq \ell_{f_{\rho_k}}(x_t,y_t)+\frac{L_{\rho_k}}{2}\|x_t-y_t\|^2\nn\\
    &=(1-\alpha_k-p_k)\ell_{f_{{\rho_k}}}(x_{t-1},y_t)+\alpha_k \ell_{f_{{\rho_k}}}(z_t,y_t)+p_k\ell_{f_{{\rho_k}}}(\tilde x_{k},y_t)+\frac{L_{\rho_k}\alpha_k^2}{2}\|z_t-z_{t-1}\|^2\nn\\
    &\overset{\eqref{l6-1}}{\leq}(1-\alpha_k-p_k)\ell_{f_{{\rho_k}}}(x_{t-1},y_t) + \alpha_k\Big(\ell_{f_{{\rho_k}}}(x,y_t)+\psi(x)-\psi(z_t)+\frac{1}{2\gamma_k}\big(\|z_{t-1}-x\|^2-\|z_t-x\|^2\nn \\ 
&\qquad-\|z_{t-1}-z_t\|^2\big)-\langle \delta_t, z_t-x\rangle\Big)
+p_k\ell_{f_{{\rho_k}}}(\tilde x_{k},y_t)+\frac{L_{\rho_k}\alpha_k^2}{2}\|z_t-z_{t-1}\|^2\nn\\
    &= (1-\alpha_k-p_k)\ell_{f_{{\rho_k}}}(x_{t-1},y_t)
    +  \alpha_k\Big(\ell_{f_{{\rho_k}}}(x,y_t)+\psi(x)-\psi(z_t)+\frac{1}{2\gamma_k}\|z_{t-1}-x\|^2-\frac{1}{2\gamma_k}\|z_t-x\|^2\Big)\nn\\
    &\quad +p_k\ell_{f_{{\rho_k}}}(\tilde x_{k},y_t)-\frac{\alpha_k}{2\gamma_k}(1-L_{\rho_k}\alpha_k\gamma_k)\|z_{t-1}-z_t\|^2-\alpha_k\langle \delta_t, z_t-x\rangle \label{frho-bnd}  .
\end{align}
Using the convexity of $f_{{\rho_k}}$, we have $\ell_{f_{\rho_k}}(x,y_t)\leq f_{\rho_k}(x)$ for any $x\in\cX$. By this, \eqref{frho-bnd} and the definition of $F_{\rho_k}$ in \eqref{def-Frho2}, one has 
\begin{align*}
    f_{\rho_k}(x_t)&\leq (1-\alpha_k-p_k)f_{{\rho_k}}(x_{t-1})
    +\alpha_k\Big(F_{\rho_k}(x)-\psi(z_t)+\frac{1}{2\gamma_k}\|z_{t-1}-x\|^2-\frac{1}{2\gamma_k}\|z_t-x\|^2\Big)\nn\\
    &\quad + p_k\ell_{f_{\rho_k}}(\tilde x_{k},y_t)-\frac{\alpha_k}{2\gamma_k}(1-L_{\rho_k}\alpha_k\gamma_k)\|z_{t-1}-z_t\|^2-\alpha_k\langle \delta_t, z_t-x\rangle. 
\end{align*}
Also, by the expression of $x_t$ and the convexity of $\psi$, one has
\[
    \psi(x_t)\leq (1-\alpha_k-p_k)\psi(x_{t-1})+\alpha_k\psi(z_t)+p_k\psi(\tilde x_{k}).
\]
Summing up these two inequalities, and using the definition of $F_{\rho_k}$ in \eqref{def-Frho2}, we have
\begin{align}
    F_{\rho_k}(x_t)& \leq(1-\alpha_k-p_k)F_{\rho_k}(x_{t-1})+\alpha_k\Big(F_{\rho_k}(x)+\frac{1}{2\gamma_k}\|z_{t-1}-x\|^2-\frac{1}{2\gamma_k}\|z_t-x\|^2\Big)+p_k\ell_{f_{\rho_k}}(\tilde x_{k},y_t)\nn \\
    &\quad\,-\frac{\alpha_k}{2\gamma_k}(1-L_{\rho_k}\alpha_k\gamma_k)\|z_{t-1}-z_t\|^2-\alpha_k\langle \delta_t, z_t-x\rangle
+ p_k\psi(\tilde x_{k}). \label{l7-3}
\end{align}
Further, by \eqref{l7-1},  \eqref{l7-3},  $\ell_{f_{\rho_k}}(\tilde x_{k},y_t) \leq f_{\rho_k}(\tilde x_{k})$, and the definition of $F_{\rho_k}$ in \eqref{def-Frho2}, one has
\begin{align*}
    F_{\rho_k}(x_t)& \leq(1-\alpha_k-p_k)F_{\rho_k}(x_{t-1})+\alpha_k\Big(F_{\rho_k}(x)+\frac{1}{2\gamma_k}\|z_{t-1}-x\|^2-\frac{1}{2\gamma_k}\|z_t-x\|^2\Big)+p_kF_{\rho_k}(\tilde x_{k})\\
    &\quad\,-\alpha_k\langle\delta_t, z_t-x\rangle.
\end{align*}
The relation \eqref{l7-exact} then follows from this inequality by rearranging terms.

We next prove that \eqref{l7-exp} holds. Indeed, observe that
\begin{align}
&p_k\ell_{f_{\rho_k}}(\tilde x_{k},y_t)-\frac{\alpha_k}{2\gamma_k}(1-L_{\rho_k}\alpha_k\gamma_k)\|z_{t-1}-z_t\|^2-\alpha_k\langle \delta_t, z_t-x\rangle \nn \\
 &=p_k\ell_{f_{\rho_k}}(\tilde x_{k},y_t)-\frac{\alpha_k}{2\gamma_k}(1-L_{\rho_k}\alpha_k\gamma_k)\|z_{t-1}-z_t\|^2-\alpha_k\langle \delta_t, z_t-z_{t-1}\rangle-\alpha_k\langle \delta_t, z_{t-1}-x\rangle \nn\\
    &\leq  p_k\ell_{f_{\rho_k}}(\tilde x_{k},y_t)+\frac{\alpha_k\gamma_k\|\delta_t\|^2}{2(1-L_{\rho_k}\alpha_k\gamma_k)}
    -\alpha_k\langle \delta_t, z_{t-1}-x\rangle, \label{l7-4}
\end{align}
where the last relation follows from \eqref{l7-1} and the Young's inequality $a\|u\|^2+ b^2\|v\|^2/a \geq 2b\langle u,v \rangle$ for any $a>0$ and $b$. In addition, let $\ell_{\barf}(u,v)=\barf(v)+\langle \nabla \barf(v), u-v\rangle$. Notice from this and \eqref{notation} that
\[
\ell_{f_{\rho_k}}(u,v) = \ell_{\barf}(u,v) + \frac{{\rho_k}}{2}\big(
\|[c(v)]_+\|^2+\langle \nabla(\|[c(v)]_+\|^2), u-v \rangle\big) \qquad \forall u, v .
\]
Using this, \eqref{mean}, \eqref{var}, \eqref{l7-2}, and the convexity of $\barf$ and $\|[c(\cdot)]_+\|^2$, we obtain that
\begin{align}
&p_k\ell_{f_{\rho_k}}(\tilde x_{k},y_t)+\frac{\alpha_k\gamma_k\bbE[\|\delta_t\|^2|\Xi_{k,t-1}]}{2(1-L_{\rho_k}\alpha_k\gamma_k)}-\alpha_k\bbE[\langle \delta_t, z_{t-1}-x\rangle|\Xi_{k,t-1}]\nn\\
&\overset{\eqref{mean}\eqref{var}}{\leq} p_k\ell_{f_{\rho_k}}(\tilde x_{k},y_t)+\frac{L_{\nabla \barf}\alpha_k\gamma_k}{1-L_{\rho_k}\alpha_k\gamma_k}\left(\barf(\tilde x_{k}) - \ell_{\barf}(\tilde x_{k},y_t)\right)\nn\\
&=p_k\ell_{\barf}(\tilde x_{k},y_t)+\frac{L_{\nabla \barf}\alpha_k\gamma_k}{1-L_{\rho_k}\alpha_k\gamma_k}\left(\barf(\tilde x_{k}) - \ell_{\barf}(\tilde x_{k},y_t)\right)+\frac{p_k{\rho_k}}{2}\left(\|[c(y_t)]_+\|^2+\langle \nabla (\|[c(y_t)]_+\|^2), \tilde x_{k} - y_t \rangle\right)\nn\\
&=\Big(p_k-\frac{L_{\nabla \barf}\alpha_k\gamma_k}{1-L_{\rho_k}\alpha_k\gamma_k}\Big)\ell_\barf(\tilde x_{k},y_t)+\frac{L_{\nabla \barf}\alpha_k\gamma_k}{1-L_{\rho_k}\alpha_k\gamma_k}\barf(\tilde x_{k})+\frac{p_k{\rho_k}}{2}\left(\|[c(y_t)]_+\|^2+\langle \nabla (\|[c(y_t)]_+\|^2), \tilde x_{k} - y_t \rangle\right)\nn\\
&\leq p_k\barf(\tilde x_{k}) + \frac{p_k{\rho_k}}{2}\|[c(\tilde x_{k})]_+\|^2
= p_kf_{\rho_k}(\tilde x_{k}),\label{l7-5}
\end{align}
where the last inequality follows from \eqref{l7-2} and the convexity of $\barf$ and $\|[c(\cdot)]_+\|^2$. It then follows from \eqref{l7-4} and \eqref{l7-5} that
\begin{align*}
&\bbE\Big[p_k\ell_{f_{\rho_k}}(\tilde x_{k},y_t)-\frac{\alpha_k}{2\gamma_k}(1-L_{\rho_k}\alpha_k\gamma_k)\|z_{t-1}-z_t\|^2-\alpha_k\langle \delta_t, z_t-x\rangle\Big] \nn \\
    &\overset{\eqref{l7-4}}{\leq}  \bbE\Big[p_k\ell_{f_{\rho_k}}(\tilde x_{k},y_t)+\frac{\alpha_k\gamma_k\|\delta_t\|^2}{2(1-L_{\rho_k}\alpha_k\gamma_k)} -\alpha_k\langle \delta_t, z_{t-1}-x\rangle\Big] \overset{\eqref{l7-5}}{\leq} p_k\bbE[f_{\rho_k}(\tilde x_{k})].
\end{align*}
Using this inequality, the definition of $F_{\rho_k}$ in \eqref{def-Frho2}, and taking expectation on both sides of \eqref{l7-3} lead to
\[
 \bbE[F_{\rho_k}(x_t)] \leq(1-\alpha_k-p_k)\bbE[F_{\rho_k}(x_{t-1})]+\alpha_k\Big(F_{\rho_k}(x)+\frac{1}{2\gamma_k}\bbE[\|z_{t-1}-x\|^2]-\frac{1}{2\gamma_k}\bbE[\|z_t-x\|^2]\Big)+p_k\bbE[F_{\rho_k}(\tilde x_{k})].
\]
The relation \eqref{l7-exp} then follows from this inequality by rearranging the terms.
\end{proof}

We next derive upper bounds for $F_\rho(x_k)-F^*_\rho $ and $\bbE[ F_\rho(x_k)-F^*_\rho]$ under the setting where constant penalty parameter $\rho_k \equiv \rho$ is used in Algorithm \ref{alg2} for some $\rho>0$.

\begin{lemma}\label{l-exactbound3}
Suppose that Assumptions \ref{a1} and \ref{a3} hold, and that $\{\tilde x_{k}\}$ is generated by Algorithm \ref{alg2} with $\{q_i\}$, $\{\theta_t\}$, $\{\gamma_k\}$, $\{p_k\}$, $\{\alpha_k\}$ and $\{T_k\}$ given in \eqref{def-para4}, and $\rho_k \equiv \rho$  for some $\rho>0$.  Let $k_0=\lfloor \log_2 s\rfloor+1$. Then for all $K> k_0$, we have
\begin{align}
    &F_{\rho}(\tilde x_K)-F_{\rho}^*\leq
      \frac{16(\tC_3+\rho \tC_4)}{s(K-k_0+3)^2}+16L_{\nabla \barf}D_\cX^2,\label{l9-bound}  \\  
    &\bbE[F_{\rho}(\tilde x_K)-F_{\rho}^*]\leq
      \frac{16(\tC_3+\rho \tC_4)}{s(K-k_0+3)^2}, \label{l9-expbound}
\end{align}
 where $D_\cX$ is given in Assumption \ref{a1}, $L_{\nabla \barf}$ is defined in \eqref{Lam}, and $\tC_3$, $\tC_4$, $F_\rho$, and $F_{\rho}^*$ are defined in \eqref{c1011} and \eqref{penalty-prob}, respectively. 
\end{lemma}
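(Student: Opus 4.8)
The plan is to run Lemma~\ref{l-iter2} within each outer iteration and chain the resulting per-epoch inequalities. Write $L_\rho=L_{\nabla\barf}+\rho L_{\nabla c^2}$ as in \eqref{notation}, let $x_\rho^*\in\cX$ be an optimal solution of \eqref{penalty-prob} (so $F_\rho^*=F_\rho(x_\rho^*)\le F_\rho(x)$ for all $x$), and set $\Theta_k:=\sum_{t=1}^{T_k}\theta_t=\tfrac{\gamma_k}{\alpha_k}\big[(T_k-1)(\alpha_k+p_k)+1\big]$ and $\hat\Theta_k:=\tfrac{\gamma_k}{\alpha_k}\big(1-\alpha_k-p_k+p_kT_k\big)$. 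First I would check that the parameters \eqref{def-para4} satisfy the hypotheses \eqref{l7-1}--\eqref{l7-2} of Lemma~\ref{l-iter2}: since $\gamma_k=1/(3L_\rho\alpha_k)$ we have $L_\rho\alpha_k\gamma_k=\tfrac13$, so $1-L_\rho\alpha_k\gamma_k=\tfrac23>0$ and $p_k-L_{\nabla\barf}\alpha_k\gamma_k/(1-L_\rho\alpha_k\gamma_k)=\tfrac12-L_{\nabla\barf}/(2L_\rho)\ge0$. Then, inside the $k$th outer iteration, I would sum \eqref{l7-exp} over $t=1,\dots,T_k$; the weights $\theta_t$ in \eqref{def-para4} are chosen precisely so that this sum telescopes --- the $F_\rho(x_t)$ terms regroup into $\sum_t\theta_t\bbE[F_\rho(x_t)-F_\rho(x)]$, the $F_\rho(x_{t-1})$ and $F_\rho(\tilde x_k)$ terms combine into $\hat\Theta_k\bbE[F_\rho(\tilde x_k)-F_\rho(x)]$, and the quadratic terms collapse to $\tfrac12\bbE[\|z_{T_k}-x\|^2]-\tfrac12\bbE[\|z_0-x\|^2]$. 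Using $z_0=\tilde z_k$, $z_{T_k}=\tilde z_{k+1}$, convexity of $F_\rho$ together with $\tilde x_{k+1}=\sum_t\theta_tx_t/\Theta_k$, and setting $x=x_\rho^*$, one obtains the per-epoch recursion
\[
\Theta_k\,\bbE[F_\rho(\tilde x_{k+1})-F_\rho^*]+\tfrac12\bbE[\|\tilde z_{k+1}-x_\rho^*\|^2]\ \le\ \hat\Theta_k\,\bbE[F_\rho(\tilde x_k)-F_\rho^*]+\tfrac12\bbE[\|\tilde z_k-x_\rho^*\|^2].
\]

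Next I would evaluate $\Theta_k,\hat\Theta_k$ from \eqref{def-para4} in the two regimes --- $k\le k_0$, where $\alpha_k=p_k=\tfrac12$ and $T_k=2^{k-1}$, and $k>k_0$, where $\alpha_k=2/(k-k_0+4)$ and $T_k=2^{k_0-1}=:\bar T$ --- and verify the monotonicity $\hat\Theta_k\le\Theta_{k-1}$ for all $k\ge2$: in the first regime both sides equal $2^{k}/(3L_\rho)$; at the transition $k=k_0+1$ it reduces to $5(1+5\bar T)\le32\bar T$; and in the second regime it reduces, after simplification, to the elementary inequality $m(\bar T-1)\ge(\bar T-3)/2$ with $m=k-k_0+3$. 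Telescoping the per-epoch recursions from $k=1$ to $K-1$ and discarding the nonnegative $F_\rho(\tilde x_{k+1})-F_\rho^*$ and $\|\tilde z_K-x_\rho^*\|^2$ terms gives $\Theta_{K-1}\bbE[F_\rho(\tilde x_K)-F_\rho^*]\le\hat\Theta_1(F_\rho(x^0)-F_\rho^*)+\tfrac12\|x^0-x_\rho^*\|^2$. Here $\hat\Theta_1=2/(3L_\rho)$; since $F_\rho^*\ge\min_{x\in\cX}F(x)\ge F(x^0)-D_F$, one has $F_\rho(x^0)-F_\rho^*\le D_F+\tfrac\rho2\|[c(x^0)]_+\|^2$, and $\|x^0-x_\rho^*\|\le D_\cX$; substituting and writing $\tfrac12 D_\cX^2=\tfrac1{3L_\rho}\cdot\tfrac32L_\rho D_\cX^2$ collapses the right-hand side exactly to $\tfrac1{3L_\rho}(\tC_3+\rho\tC_4)$ with $\tC_3,\tC_4$ as in \eqref{c1011}. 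Finally, for $K>k_0$ the explicit form of $\Theta_{K-1}$ together with $\bar T=2^{k_0-1}\ge s/2$ yields $\Theta_{K-1}\ge s(K-k_0+3)^2/(48L_\rho)$, and dividing gives \eqref{l9-expbound}.

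For the deterministic bound \eqref{l9-bound} I would redo the same telescoping starting from \eqref{l7-exact} rather than \eqref{l7-exp}, which carries the extra term $-\gamma_k\langle\delta_t,z_t-x_\rho^*\rangle$ at each inner step. The key point is that $\delta_t$ is bounded deterministically: from $\delta_t=(\nabla f_{i_t}(y_t)-\nabla f_{i_t}(\tilde x_k))/(q_{i_t}s)+\nabla\barf(\tilde x_k)-\nabla\barf(y_t)$, the choice $q_i=L_i/\sum_jL_j$ (so $L_{i_t}/(q_{i_t}s)=L_{\nabla\barf}$), Assumption~\ref{a3}, and the boundedness of $\cX$ (all $y_t,\tilde x_k$ lie in $\cX$) give $\|\delta_t\|\le2L_{\nabla\barf}D_\cX$, whence $\big|\gamma_k\sum_{t=1}^{T_k}\langle\delta_t,z_t-x_\rho^*\rangle\big|\le2\gamma_kT_kL_{\nabla\barf}D_\cX^2$. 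Carried through the telescoping, the accumulated error is $2L_{\nabla\barf}D_\cX^2\sum_{k=1}^{K-1}\gamma_kT_k$; evaluating $\sum_{k=1}^{K-1}\gamma_kT_k$ explicitly in the two regimes and dividing by the lower bound on $\Theta_{K-1}$ shows that this contributes at most $16L_{\nabla\barf}D_\cX^2$ --- crucially the $L_\rho$ factors cancel, so this additive term is independent of $\rho$. Adding it to the bound from the previous paragraph yields \eqref{l9-bound}.

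The step I expect to be the main obstacle is the middle one: making the sum of Lemma~\ref{l-iter2} over the inner iterations reproduce the prescribed $\theta_t$-weighting exactly, and establishing $\hat\Theta_k\le\Theta_{k-1}$ uniformly across the phase change at $k=k_0$, where $\alpha_k$ switches from the constant $\tfrac12$ to the shrinking sequence $2/(k-k_0+4)$ while $T_k$ freezes at $\bar T=2^{k_0-1}$. The final $\mathcal{O}\big(1/(s(K-k_0+3)^2)\big)$ rate rests on the telescoping surviving that transition and on the lower bound $\Theta_{K-1}\ge s(K-k_0+3)^2/(48L_\rho)$, so the constant bookkeeping there must be done carefully.
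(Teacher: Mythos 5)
Your proposal is correct and follows essentially the same route as the paper's proof: verifying the hypotheses of Lemma~\ref{l-iter2} for the parameters \eqref{def-para4}, summing \eqref{l7-exact}/\eqref{l7-exp} over the inner loop with the $\theta_t$-weights to obtain the per-epoch recursion in the quantities $\Theta_k=\cL_k$ and $\hat\Theta_k=\mathcal{R}_k$, proving $\mathcal{R}_{k+1}\le\cL_k$ across both parameter regimes, telescoping, and using the deterministic bound $\|\delta_t\|\le 2L_{\nabla\barf}D_\cX$ together with $\cL_{K-1}\ge s(K-k_0+3)^2/(48L_\rho)$ and $\sum_k\gamma_kT_k\le s(K-k_0+3)^2/(6L_\rho)$. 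The only cosmetic differences are the order in which the two bounds are derived and that you verify the phase transition at $k=k_0+1$ separately, whereas the paper absorbs $k=k_0$ into the second case by noting $\alpha_{k_0}=1/2=2/(k_0-k_0+4)$; your constant bookkeeping (e.g.\ $\hat\Theta_1=2/(3L_\rho)$, the reduction to $m(\bar T-1)\ge(\bar T-3)/2$) matches the paper's computations exactly.
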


\begin{proof}
Let $\{g_t\}$ and $\{y_t\}$ be generated in the $k$th outer iteration of Algorithm \ref{alg2}, and $\delta_t = g_t-\nabla f_{{\rho_k}}(y_t)$. Notice from \eqref{def-para3} that $q_i=L_i/(\sum_{j=1}^sL_j)$. By these, \eqref{Lam}, the expression of $g_t$,  $L_i$-smoothness of $f_i$, $L_{\nabla \barf}$-smoothness of $f$, and Assumption \ref{a1}, one has
\begin{align*}
    \|\delta_t\| &=\Big\|\frac{\nabla f_{i_t}(y_t)-\nabla f_{i_t}(\tilde x_{k})}{q_{i_t}s}+\nabla f(\tilde x_{k})- \nabla f(y_t)\Big\| \leq\frac{L_{i_t}\|y_t-\tilde x_{k}\|}{q_{i_t}s}+L_{\nabla \barf}\|\tilde x_{k}-y_t\| \\
& \leq \left(\frac{L_{i_t}}{q_{i_t}s}+L_{\nabla \barf}\right)D_\cX\overset{\eqref{Lam}}{=}2L_{\nabla \barf}D_\cX.
\end{align*}
This inequality and Assumption \ref{a1} imply that
\begin{equation}\label{deltabound}
    \langle\delta_t, z_t-x\rangle\leq \|\delta_t\|\|z_t-x\|\leq2L_{\nabla \barf}D_\cX^2 \qquad \forall x\in \cX.
\end{equation}

Let $L_{\rho}$ be defined in  \eqref{notation}.  Notice from  \eqref{def-para4} and \eqref{notation} that $1/(3L_{\rho_k}\alpha_k)=\gamma_k \leq 1/(3L_{\nabla \barf}\alpha_k)$, which implies that 
$L_{\rho_k}\alpha_k\gamma_k=1/3$ and $L_{\nabla \barf}\alpha_k\gamma_k \leq 1/3$. By these, one can verify that  the assumption of Lemma \ref{l-iter2} holds for $\{q_i\}$, $\{\alpha_k\}$, $\{\gamma_k\}$ and $\{p_k\}$ given in \eqref{def-para4}. Let $x^*_\rho\in\argmin\limits_{x\in\cX} F_\rho(x)$. It then follows from \eqref{l7-exact} with $x=x^*_\rho$ and \eqref{deltabound} that 
\begin{align}
    \frac{\gamma_k}{\alpha_k}(F_{\rho}(x_t)-F_\rho^*)&\leq 
 \frac{\gamma_k}{\alpha_k}(1-\alpha_k-p_k)(F_{\rho}(x_{t-1})-F_\rho^*)+\frac{\gamma_kp_k}{\alpha_k} (F_{\rho}(\tilde x_{k})-F_\rho^*)\nn\\
 &\quad +\frac{1}{2}\big(\|z_{t-1}-x^*_\rho\|^2-\|z_t-x^*_\rho\|^2\big) +2\gamma_kL_{\nabla \barf}D_\cX^2.\label{l9-1}
\end{align}
Summing up \eqref{l9-1} from $t=1$ to $T_k$, and using the fact that $x_0=\tilde x_{k}$, $z_0=\tilde z_{k}$, $z_{T_k}=\tilde z_{k+1}$, we have
\begin{align}
\sum_{t=1}^{T_k}\frac{\gamma_k}{\alpha_k}(F_\rho(x_t)-F_\rho^*)
&\leq \frac{\gamma_k}{\alpha_k}(1-\alpha_k-p_k)(F_\rho(x_{0})-F_\rho^*)+ \sum_{t=2}^{T_k}\frac{\gamma_k}{\alpha_k}(1-\alpha_k-p_k)(F_\rho(x_{t-1})-F_\rho^*)\nn\\
&\quad+\frac{T_k\gamma_kp_k}{\alpha_k}(F_\rho(\tilde x_{k})-F_\rho^*)+\frac{1}{2}\big(\|z_{0}-x^*_\rho\|^2-\|z_{T_k}-x^*_\rho\|^2\big)+2\gamma_k T_k L_{\nabla \barf}D_\cX^2\nn\\
&= \frac{\gamma_k}{\alpha_k}(1-\alpha_k-p_k)(F_\rho(\tilde x_{k})-F_\rho^*)+ \sum_{t=1}^{T_k-1}\frac{\gamma_k}{\alpha_k}(1-\alpha_k-p_k)(F_\rho(x_{t})-F_\rho^*)\nn\\
&\quad+\frac{T_k\gamma_kp_k}{\alpha_k}(F_\rho(\tilde x_{k})-F_\rho^*)
+\frac{1}{2}\big(\|\tilde z_{k}-x^*_\rho\|^2-\|\tilde z_{k+1}-x^*_\rho\|^2\big)+2\gamma_k T_k L_{\nabla \barf}D_\cX^2.\label{l9-sum}
\end{align}

Let $\cL_k$ and $\mathcal{R}_k$ be defined as
\beq \label{Lk}
    \cL_k :=\frac{\gamma_k}{\alpha_k}+(T_k-1)\frac{\gamma_k(\alpha_k+p_k)}{\alpha_k}, \qquad
    \mathcal{R}_k :=\frac{\gamma_k}{\alpha_k}(1-\alpha_k)+(T_k-1)\frac{\gamma_kp_k}{\alpha_k}
\eeq
 with the associated $\alpha_k$, $\gamma_k$, $p_k$ and $T_k$ given in \eqref{def-para4}. Notice from \eqref{def-para4}, the definition of $\cL_k$, and 
Algorithm \ref{alg2} that $\cL_k=\sum_{t=1}^{T_k}\theta_t$ and $\tilde x_{k+1}=\sum_{t=1}^{T_k}(\theta_tx_t)/\sum_{t=1}^{T_k}\theta_t$. Using  these, \eqref{l9-sum}, and the convexity of $F_\rho$, we have
    \begin{align}
        & \cL_k(F_\rho(\tilde x_{k+1})-F_\rho^*)=\Big(\sum_{t=1}^{T_k}\theta_t\Big)(F_\rho(\tilde x_{k+1})-F_\rho^*)\leq\sum_{t=1}^{T_k}\theta_t(F_\rho(x_t)-F_\rho^*)\nn\\
        &\overset{\eqref{def-para4}}=\sum_{t=1}^{T_k}\frac{\gamma_k}{\alpha_k}(F_\rho(x_t)-F_\rho^*)-\sum_{t=1}^{T_k-1}\frac{\gamma_k}{\alpha_k}(1-\alpha_k-p_k)(F_\rho(x_t)-F_\rho^*)\nn \\
        &\overset{\eqref{l9-sum}}{\leq} \Big(\frac{\gamma_k}{\alpha_k}(1-\alpha_k)+(T_k-1)\frac{\gamma_kp_k}{\alpha_k}\Big)(F_\rho(\tilde x_{k})-F_\rho^*)
    +\frac{1}{2}\big(\|\tilde z_{k}-x^*_\rho\|^2-\|\tilde z_{k+1}-x^*_\rho\|^2\big)+2\gamma_k T_k L_{\nabla \barf}D_\cX^2 \nn \\
& =\mathcal{R}_k(F_\rho(\tilde x_{k})-F_\rho^*)
    +\frac{1}{2}\big(\|\tilde z_{k}-x^*_\rho\|^2-\|\tilde z_{k+1}-x^*_\rho\|^2\big)+2\gamma_kT_k L_{\nabla \barf}D_\cX^2. \label{l9-2}
    \end{align}

Next claim that $\cL_k-\mathcal{R}_{k+1}\geq 0$ for all $k\geq1$. Indeed, notice from \eqref{def-para4} that if $1\leq k< k_0$,  $\alpha_{k+1}=\alpha_k=1/2$, $p_k=1/2$, $\gamma_{k+1}=\gamma_k$, and $T_{k+1}=2T_k$. Using these and \eqref{Lk}, we have that for all $1\leq k< k_0$,
\begin{align*}
    &\cL_k-\mathcal{R}_{k+1}
    \overset{\eqref{Lk}}{=}\frac{\gamma_k}{\alpha_k}+(T_k-1)\frac{\gamma_k(\alpha_k+p_k)}{\alpha_k}-\frac{\gamma_{k+1}}{\alpha_{k+1}}(1-\alpha_{k+1})+(T_{k+1}-1)\frac{\gamma_{k+1}p_{k+1}}{\alpha_{k+1}}\\
    &= \frac{\gamma_k}{\alpha_k}\big(1+(T_k-1)(\alpha_k+p_k)-(1-\alpha_k)-(2T_k-1)p_k\big)=0,
\end{align*}
and hence the above claim holds for all $1\leq k< k_0$. On the other hand, if $k\geq k_0$, notice from \eqref{def-para4},  \eqref{notation},  $k_0 \geq 1$, and $\rho_k \equiv \rho$ that $T_k=T_{k+1}=T_{k_0}\geq1$, $\alpha_k=2/(k-k_0+4)$, $p_{k}=1/2$, and $\gamma_k=1/(3L_\rho\alpha_k)=(k-k_0+4)/(6L_\rho)$. Using these and \eqref{Lk}, we obtain that for all $k\geq k_0$,
\begin{align*}
    &\cL_k-\mathcal{R}_{k+1}\overset{\eqref{Lk}}{=}\frac{\gamma_k}{\alpha_k}+(T_{k}-1)\frac{\gamma_k(\alpha_k+p_k)}{\alpha_k}-\frac{\gamma_{k+1}}{\alpha_{k+1}}(1-\alpha_{k+1})-(T_{k+1}-1)\frac{\gamma_{k+1}p_{k+1}}{\alpha_{k+1}}\\
    &=\frac{\gamma_k}{\alpha_k}+(T_{k_0}-1)\frac{\gamma_k(\alpha_k+p_k)}{\alpha_k}-\frac{\gamma_{k+1}}{\alpha_{k+1}}(1-\alpha_{k+1})-(T_{k_0}-1)\frac{\gamma_{k+1}p_{k+1}}{\alpha_{k+1}}\\
    &=\frac{\gamma_k}{\alpha_k}-\frac{\gamma_{k+1}}{\alpha_{k+1}}(1-\alpha_{k+1})+(T_{k_0}-1)\Big(\frac{\gamma_k(\alpha_k+p_k)}{\alpha_k}-\frac{\gamma_{k+1}p_{k+1}}{\alpha_{k+1}}\Big)\\
    &=\frac{(k-k_0+4)^2}{12L_\rho}-\frac{(k-k_0+5)^2}{12L_\rho}\Big(1-\frac{2}{k-k_0+5}\Big)\\
    &\quad+(T_{k_0}-1)\Big[\frac{(k-k_0+4)^2}{12L_\rho}\Big(\frac{2}{k-k_0+4}+\frac{1}{2}\Big)-\frac{(k-k_0+5)^2}{24L_\rho}\Big]\\
    &=\frac{(k-k_0+4)^2-(k-k_0+5)^2+2(k-k_0+5)}{12L_\rho} \\
    &\quad+\frac{(T_{k_0}-1)\big[4(k-k_0+4)+(k-k_0+4)^2-(k-k_0+5)^2\big]}{24L_\rho}\\
    &=\frac{1}{12L_\rho}+\frac{(T_{k_0}-1)(2(k-k_0)+7)}{24L_\rho}\geq 0,
\end{align*}
where the second equality is due to $T_k=T_{k+1}=T_{k_0}$, and the fourth equality follows from $\alpha_k=2/(k-k_0+4)$, $p_{k}=1/2$ and $\gamma_k=(k-k_0+4)/(6L_\rho)$. Hence, the above claim also holds for all $k \geq k_0$.

Summing up \eqref{l9-2} from $k=1$ to $K-1$, and using the fact that $F_\rho(x)-F_\rho^*\geq 0$ for any $x\in\cX$, $\cL_k-\mathcal{R}_{k+1}\geq 0$ for all $k\geq 1$, $\tilde x_1=\tilde z_1=x^0$, and $\mathcal{R}_1=2/(3L_\rho)$, we have
\begin{align}
\cL_{K-1}(F_\rho(\tilde x_K)-F_\rho^*)&
\leq \sum_{k=1}^{K-2}(\mathcal{R}_{k+1}-\cL_k)(F_\rho(\tilde x_{k})-F_\rho^*)+\mathcal{R}_1(F_\rho(\tilde x_1)-F_\rho^*)+\frac{1}{2}\|\tilde z_1-x^*_\rho\|^2+2L_{\nabla \barf}D_\cX^2\sum_{k=1}^{K-1}\gamma_kT_k \nn\\
&\leq\frac{2}{3L_\rho}(F_\rho(x^0)-F_\rho^*)+\frac{1}{2}\|x^0-x^*_\rho\|^2+2L_{\nabla \barf}D_\cX^2\sum_{k=1}^{K-1}\gamma_kT_k.    \label{l9-4}
\end{align}
In addition, recall that $k_0=\lfloor \log_2 s\rfloor+1$ and $T_{k_0}=2^{k_0-1}$, which imply that  $s/2\leq T_{k_0} \leq s$. By this, $\rho_k\equiv \rho$,  \eqref{def-para4}, \eqref{notation}, and \eqref{Lk}, one has that for all $K>k_0$,
\begin{align*}
    \cL_{K-1}&\overset{\eqref{Lk}}{=}\frac{\gamma_{K-1}}{\alpha_{K-1}}\big(1+(T_{K-1}-1)(\alpha_{K-1}+p_{K-1})\big)\\
    &\overset{\eqref{def-para4}\eqref{notation}}{=}\frac{1}{3L_\rho\alpha_{K-1}^2}\left(1+(T_{K-1}-1)\Big(\alpha_{K-1}+\frac{1}{2}\Big)\right)
    =\frac{T_{K-1}-1}{3L_\rho\alpha_{K-1}}+\frac{T_{K-1}+1}{6L_\rho\alpha_{K-1}^2}\\
    &\overset{\eqref{def-para4}}{=}\frac{(T_{k_0}-1)(K-k_0+3)}{6L_\rho}+\frac{(T_{k_0}+1)(K-k_0+3)^2}{24L_\rho}\\
    &\geq\frac{(s-2)(K-k_0+3)}{12L_\rho}+\frac{(s+2)(K-k_0+3)^2}{48L_\rho} \\
    & =\frac{(s-2)(K-k_0+3)}{12L_\rho}+\frac{2(K-k_0+3)^2}{48L_\rho} + \frac{s(K-k_0+3)^2}{48L_\rho} \\
    &\geq \frac{(s-2)(K-k_0+3)}{12L_\rho}+\frac{8(K-k_0+3)}{48L_\rho} + \frac{s(K-k_0+3)^2}{48L_\rho} \\
    & = \frac{s(K-k_0+3)}{12L_\rho}+ \frac{s(K-k_0+3)^2}{48L_\rho} 
     \geq \frac{s(K-k_0+3)^2}{48L_\rho},\\
    \sum_{k=1}^{K-1}\gamma_kT_k&=\sum_{k=1}^{k_0-1}\gamma_kT_k+\sum_{k=k_0}^{K-1}\gamma_kT_k 
    \overset{\eqref{def-para4}\eqref{notation}}{=} \frac{2}{3L_\rho}\sum_{k=1}^{k_0-1}2^{k-1}+\frac{T_{k_0}}{3L_\rho}\sum_{k=k_0}^{K-1}\frac{k-k_0+4}{2}\\
    &\leq \frac{2^{k_0}}{3L_\rho}+\frac{T_{k_0}}{3L_\rho}\frac{(K-k_0+7)(K-k_0)}{4}
    = \frac{T_{k_0}\big((K-k_0)^2+7(K-k_0)+8\big)}{12L_\rho}\\
    &\leq \frac{T_{k_0}\big(2(K-k_0)^2+12(K-k_0)+18\big)}{12L_\rho}\leq\frac{s(K-k_0+3)^2}{6L_\rho}.
\end{align*}
 It then follows from the above two relations,  \eqref{penalty-prob},  \eqref{notation}, \eqref{l9-4},  and  Assumption \ref{a1}(i) that
\begin{align*}
F_{\rho}(\tilde x_K)-F_\rho^*&\leq \frac{16}{s(K-k_0+3)^2}\Big(2(F_\rho(x^0)-F_\rho^*)+\frac{3L_\rho}{2}\|x^0-x^*_\rho\|^2\Big)+16L_{\nabla \barf}D_\cX^2\nn\\
&\overset{ \eqref{penalty-prob}}{\leq} \frac{16}{s(K-k_0+3)^2}\Big(2(F(x^0)-F(x^*_{\rho})) +\rho\|[c(x^0)]_+\|^2 +\frac{3L_\rho}{2}\|x^0-x^*_\rho\|^2\Big)+16L_{\nabla \barf}D_\cX^2, \nn \\
&\leq \frac{16}{s(K-k_0+3)^2}\Big(2D_F+\rho\|[c(x^0)]_+\|^2 + \frac{3(L_{\nabla \barf}+\rho L_{\nabla c^2})  D_\cX^2}{2}\Big)+16L_{\nabla \barf}D_\cX^2,
\end{align*}
which together with \eqref{c1011} implies that \eqref{l9-bound} holds. In addition, the proof of \eqref{l9-expbound} follows from  \eqref{l7-exp} and similar arguments as above, and is thus omitted.
\end{proof}

We are now ready to prove Theorem \ref{thm:finite-sum_fixed}.

\begin{proof}[\textbf{Proof of Theorem \ref{thm:finite-sum_fixed}}]
We first prove statement (i) of Theorem \ref{thm:finite-sum_fixed}, where $\rho=s^{2/3}K^{4/3}$. Using \eqref{feasibility} and \eqref{l9-bound},  we have
\begin{align}
&\|[c(\tilde x_K)]_+\|\overset{\eqref{feasibility}}{\leq}  \frac{2\Lambda}{\rho}+\sqrt{\frac{2(F_{\rho}(\tilde x_K)-F_{\rho}^*)}{\rho}}
\overset{\eqref{l9-bound}}{\leq}\frac{2\Lambda}{\rho}+\sqrt{\frac{32(\tC_3+\rho \tC_4)}{\rho s(K-k_0+3)^2}+\frac{32L_{\nabla \barf}D_\cX^2}{\rho}}\nn\\
&\leq \frac{2\Lambda}{\rho}+\sqrt{\frac{32\tC_3}{\rho s(K-k_0+3)^2}}+\sqrt{\frac{32\tC_4}{s(K-k_0+3)^2}}+\sqrt{\frac{32L_{\nabla \barf}D_\cX^2}{\rho}}.\label{Thm3-detercons}
\end{align}
Also, observe from \eqref{opt-gap} that $\bbE [F(\tilde x_K)-F^*] \leq \bbE[F_\rho(\tilde x_K)-F_\rho^*]$. Using these, \eqref{l9-expbound}, $\rho=s^{2/3}K^{4/3}$, and $K\geq2(k_0-3)$, we see that  \eqref{thm4-ineq1} and \eqref{thm4-ineq2} hold. In addition, by \eqref{feasibility} and \eqref{l9-expbound}, one has
\begin{align}
    \bbE[\|[c(\tilde x_K)]_+\|] & \overset{\eqref{feasibility}}
    {\leq} \frac{2\Lambda}{\rho}+\bbE\left[\sqrt{\frac{2(F_{\rho}(\tilde x_K)-F_{\rho}^*)}{\rho}}\,\right] \leq\frac{2\Lambda}{\rho}+\sqrt{\frac{2\bbE[F_{\rho}(\tilde x_K)-F_{\rho}^*]}{\rho}} \nn\\
&\overset{\eqref{l9-expbound}}{\leq} \frac{2\Lambda}{\rho} +\sqrt{\frac{32(\tC_3+\rho \tC_4)}{\rho s(K-k_0+3)^2}} \leq \frac{2\Lambda}{\rho} +\sqrt{\frac{32\tC_3}{\rho s(K-k_0+3)^2}}+\sqrt{\frac{32\tC_4}{s(K-k_0+3)^2}}.\label{Thm3-expcons}
\end{align}
Also, notice from  \eqref{opt-gap} that $\bbE [F(\tilde x_K)-F^*] \geq -\Lambda  \bbE[\|[c(\tilde x_K)]_+\|]$. By these, $\rho=s^{2/3}K^{4/3}$,  and $K\geq2(k_0-3)$, one can conclude that  \eqref{thm4-ineq1-exp} and \eqref{thm4-ineq3} hold.

The proof of statement (ii) of Theorem \ref{thm:finite-sum_fixed} follows from $\rho=\sqrt{s}K$ and similar arguments as above, and is thus omitted.
\end{proof}

In the remainder of this subsection, we provide a proof of Theorem \ref{thm:finite-sum}.  Before proceeding, we establish several technical lemmas below. The following lemma provides a relationship between $ \cL_{k}$ and $\mathcal{R}_{k+1}$.

\begin{lemma}\label{LkRk}
Let $\cL_k$ and $\mathcal{R}_k$ be defined in \eqref{Lk} with $\{\alpha_k\}$, $\{\gamma_k\}$, $\{p_k\}$  given in \eqref{def-para3}, and $k_0$, $\{\rho_k\}$, $\{T_k\}$ given in  \eqref{rho1} or  \eqref{rho2}. Then we have 
\begin{equation}\label{lkrk}
    \cL_{k}\Big(1-\frac{2(\rho_{k+1}-\rho_k)}{\rho_{k+1}}\Big)\geq \mathcal{R}_{k+1} \qquad \forall k \geq 1.
\end{equation}
\end{lemma}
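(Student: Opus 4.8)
The plan is to verify \eqref{lkrk} directly from the explicit formulas for $\cL_k$ and $\mathcal{R}_k$ in \eqref{Lk}, splitting into the cases $1\le k<k_0$, $k=k_0$, and $k>k_0$, exactly as in the analogous argument inside the proof of Lemma~\ref{l-exactbound3}. The only new feature compared with that earlier computation is the extra factor $1-2(\rho_{k+1}-\rho_k)/\rho_{k+1}$ on the left, which is needed because the penalty parameter is now \emph{increasing}; so the bookkeeping that previously gave $\cL_k-\mathcal{R}_{k+1}\ge 0$ must now absorb this multiplicative loss, and this is where the choices of $\{\rho_k\}$, $\{T_k\}$, $\{\alpha_k\}$, $\{p_k\}$ in \eqref{def-para3} together with \eqref{rho1} (resp.\ \eqref{rho2}) are tailored.

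First I would record, from \eqref{def-para3}, that $\gamma_k=1/(8L_{\rho_k}\alpha_k)$ with $L_{\rho_k}=L_{\nabla\barf}+\rho_kL_{\nabla c^2}$, so $\gamma_k/\alpha_k=1/(8L_{\rho_k}\alpha_k^2)$, and substitute into \eqref{Lk} to get closed forms
\[
\cL_k=\frac{1}{8L_{\rho_k}\alpha_k^2}\bigl(1+(T_k-1)(\alpha_k+p_k)\bigr),\qquad
\mathcal{R}_{k+1}=\frac{1}{8L_{\rho_{k+1}}\alpha_{k+1}^2}\bigl((1-\alpha_{k+1})+(T_{k+1}-1)p_{k+1}\bigr).
\]
For $1\le k<k_0$ one has $\alpha_k=\alpha_{k+1}=6/7$, $p_k=1/7$, $T_{k+1}=2T_k$ (in case (ii)), and $\rho_k=2^{k/2}$ so $\rho_{k+1}/\rho_k=\sqrt2$ and $L_{\rho_{k+1}}/L_{\rho_k}$ is at most $\sqrt2$; the claimed inequality then reduces to a fixed numerical inequality in $T_k$ that I would check holds for all $T_k\ge 1$. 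For $k>k_0$, $T_k=T_{k+1}=T_{k_0}$, $\alpha_k=6/(k-k_0+7)$, $p_k=1/7$, and $\rho_k$ is a constant multiple of $(k-k_0+7)^{4/3}$ (case (i)) or $(k-k_0+7)$ (case (ii)); writing $j=k-k_0+7\ge 8$, the factor $1-2(\rho_{k+1}-\rho_k)/\rho_{k+1}=1-2(1-(j/(j+1))^{4/3})$ (resp.\ $1-2/(j+1)$), and the whole inequality becomes an explicit rational/algebraic inequality in $j$ and $T_{k_0}$ which I would reduce to showing a polynomial in $j$ is nonnegative for $j\ge 8$. The boundary index $k=k_0$ needs its own short computation because $T_k$ stops doubling there and $\alpha_k$ switches formula; I would treat it by plugging the two relevant values directly.

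The main obstacle I expect is the case $k>k_0$ in part (i): there the ratio $\rho_{k+1}/\rho_k=((j+1)/j)^{4/3}$ is not a rational number, so I would first bound $1-2(1-(j/(j+1))^{4/3})$ from below by a rational expression — e.g.\ using $(1+1/j)^{4/3}\le 1+\tfrac{4}{3j}+\tfrac{c}{j^2}$ for a suitable explicit constant $c$ valid on $j\ge 8$ (convexity / second-order Taylor bound) — and also bound $L_{\rho_{k+1}}/L_{\rho_k}\le (j+1)^{4/3}/j^{4/3}$ similarly, so that after clearing denominators the inequality is polynomial. The constant $7$ in $p_k=1/7$ and $\alpha_k=6/(\cdot)$, and the exponent $4/3$ (vs.\ $1$ in case (ii)), are precisely chosen so that these polynomial inequalities are valid once $j\ge 8$, i.e.\ for $k\ge k_0+1$; I would confirm that the lower threshold on $k$ implicit in \eqref{lkrk} being asserted "for all $k\ge 1$" is consistent with the parameter ranges (it is, since for $k<k_0$ and $k=k_0$ the inequality is checked unconditionally). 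Once these elementary bounds are in place, the verification is a finite case check plus one-variable polynomial positivity, so no conceptual difficulty remains beyond the careful arithmetic.
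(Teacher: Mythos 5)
Your plan follows essentially the same route as the paper's proof: the same three-case split ($1\le k<k_0$, $k=k_0$, $k>k_0$), the same closed forms for $\cL_k$ and $\mathcal{R}_{k+1}$, and the same device of bounding the irrational ratio $\rho_{k+1}/\rho_k=((j+1)/j)^{4/3}$ via convexity of $\tau^{4/3}$ (the paper uses $(k-k_0+8)^{4/3}-(k-k_0+7)^{4/3}\le\tfrac43(k-k_0+8)^{1/3}$ and $(8/7)^{4/3}\le 6/5$, which is exactly your Taylor-type bound). The only detail you leave implicit is that in case (i) with $1\le k<k_0$ one still needs $T_{k+1}\le 2T_k$ for $T_k=\lceil 2^{3(k-1)/4}\rceil$ (via $\lceil 2^{3/4}x\rceil\le\lceil 2^{3/4}\rceil\lceil x\rceil$), after which the remaining arithmetic goes through as you describe.
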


\begin{proof}
We divide the proof into three separate cases: $1\leq k<k_0$, $k=k_0$, and $k>k_0$.

Case 1) $1\leq k<k_0$. Fix any $1\leq k<k_0$. Claim that $T_k \geq 1$ and $T_{k+1}\leq2T_k$ hold for $\{T_k\}$  given in  \eqref{rho1} or \eqref{rho2}. Indeed, if $\{T_k\}$ is given in  \eqref{rho1}, one has $T_{k}=\lceil2^{3(k-1)/4}\rceil$, which implies that $T_k \geq 1$ and 
\[
T_{k+1}=\lceil2^{3(k+1-1)/4}\rceil=\lceil2^{3/4}\times2^{3(k-1)/4}\rceil\leq\lceil2^{3/4}\rceil\times\lceil2^{3(k-1)/4}\rceil\leq2T_k.
\]
In addition, if $\{T_k\}$ is given in  \eqref{rho2}, one has $T_k=2^{k-1}$, yielding $T_k \geq 1$ and $T_{k+1}=2T_k$. Hence, the claim holds as desired. Observe from  \eqref{def-para3} that $\alpha_{k+1}=\alpha_k$ and $\rho_{k+1} \geq \rho_k$, which along with \eqref{notation} imply that $\gamma_{k+1} \leq \gamma_k$. Also, notice from  \eqref{def-para3} that $\alpha_{k+1}=\alpha_k=6/7$, $p_{k+1}=p_k=1/7$, $\rho_{k+1}=\sqrt{2}\rho_k$. Using these, \eqref{Lk}, $T_k \geq 1$, $T_{k+1}\leq2T_k$, and $\gamma_{k+1} \leq \gamma_k$, we obtain that
\begin{align*}
    \cL_k-\mathcal{R}_{k+1}&\overset{\eqref{Lk}}{=}\frac{\gamma_k}{\alpha_k}+(T_k-1)\frac{\gamma_k(\alpha_k+p_k)}{\alpha_k}-\frac{\gamma_{k+1}}{\alpha_{k+1}}(1-\alpha_{k+1})+(T_{k+1}-1)\frac{\gamma_{k+1}p_{k+1}}{\alpha_{k+1}}\\
    &\geq\frac{\gamma_k}{\alpha_k}(1+(T_k-1)(\alpha_k+p_k))-\frac{\gamma_{k+1}}{\alpha_k}((1-\alpha_k)+(2T_k-1)p_k) \\
    &\geq\frac{\gamma_k}{\alpha_k}(1+(T_k-1)(\alpha_k+p_k)-(1-\alpha_k)-(2T_k-1)p_k) \\
&=\frac{5\gamma_kT_k}{7\alpha_k}=\frac{5\cL_k}{7}  \geq\frac{2(\rho_{k+1}-\rho_k)}{\rho_{k+1}}\cL_k,
\end{align*}
where the first inequality is  due to $\alpha_{k+1}=\alpha_k$ and $T_{k+1}\leq2T_k$, the second inequality follows from $\gamma_{k+1} \leq \gamma_k$, $0<\alpha_k <1$ and $T_k \geq 1$, the second and third equalities are due  to $\alpha_k=6/7$, $p_k=1/7$, and \eqref{Lk}, and  the last inequality follows from $\rho_{k+1}=\sqrt{2}\rho_k$. Hence, \eqref{lkrk} holds for all $1\leq k< k_0$. 

Case 2) $k=k_0$. Claim that $0<(\rho_{k_0+1}-\rho_{k_0})/\rho_{k_0+1}\leq1/3$ holds for $k_0$, $\rho_{k_0}$, and $\rho_{k_0+1}$ given in \eqref{rho1} or  \eqref{rho2}. Indeed, if they are given in \eqref{rho1}, one can observe that $k_0=\lfloor(4/3)\log_2 s\rfloor+1$, $\rho_{k_0}=2^{k_0/2}$, and $\rho_{k_0+1}=3s^{2/3}/2$. Also, notice that $s^{2/3} \leq 2^{(\lfloor(4/3)\log_2 s\rfloor+1)/2}\leq \sqrt{2} s^{2/3}$ and hence $s^{2/3} \leq \rho_{k_0} \leq \sqrt{2} s^{2/3}$.  This together with $\rho_{k_0+1}=3s^{2/3}/2$ implies that
\[
0<(\rho_{k_0+1}-\rho_{k_0})/\rho_{k_0+1}\leq(3s^{2/3}/2-s^{2/3})/(3s^{2/3}/2)=1/3.
\]
 In addition, if $k_0$, $\rho_{k_0}$, and $\rho_{k_0+1}$ are given in \eqref{rho2}, we observe that  $k_0=\lfloor \log_2 s\rfloor+1$, $\rho_{k_0}=2^{k_0/2}$, and $\rho_{k_0+1}=3\sqrt{s}/2$. Notice that 
$\sqrt{s} \leq 2^{(\lfloor\log_2 s\rfloor+1)/2}\leq \sqrt{2s}$ and hence  $\sqrt{s} \leq \rho_{k_0} \leq  \sqrt{2s}$. By this and $\rho_{k_0+1}=3\sqrt{s}/2$, one has
\[
0<(\rho_{k_0+1}-\rho_{k_0})/\rho_{k_0+1}\leq(3\sqrt{s}/2-\sqrt{s})/(3\sqrt{s}/2)=1/3.
\]
Hence, the above claim holds as desired. Notice from this claim, \eqref{def-para3}, and \eqref{notation} that  $L_{\rho_{k_0+1}}\geq L_{\rho_{k_0}}$, $\alpha_{k_0}=6/7$, and $\alpha_{k_0+1}=6/8$. By these and  \eqref{def-para3}, one has
\begin{equation} \label{alpha-gamma}
\gamma_{k_0+1}/\alpha_{k_0+1}=1/(8L_{\rho_{k_0+1}}\alpha_{k_0+1}^2)\leq1/(8L_{\rho_{k_0}}\alpha_{k_0+1}^2)=64/(392L_{\rho_{k_0}}\alpha_{k_0}^2)=64\gamma_{k_0}/(49\alpha_{k_0}).
\end{equation}
Notice from \eqref{def-para3}, \eqref{rho1}, \eqref{rho2}, and \eqref{Lk} that $\alpha_{k_0+1} \leq 1$,  $\alpha_{k_0}=6/7$, $p_{k_0+1}=p_{k_0}=1/7$, $T_{k_0+1}=T_{k_0} \geq 1$, and $\cL_{k_0}= \gamma_{k_0}T_{k_0}/\alpha_{k_0}$. Using these, \eqref{alpha-gamma}, and $(\rho_{k_0+1}-\rho_{k_0})/\rho_{k_0+1}\leq1/3$, we obtain that
\begin{align*}
    \cL_{k_0}-\mathcal{R}_{k_0+1}
    &\overset{\eqref{Lk}}{=}\frac{\gamma_{k_0}}{\alpha_{k_0}}+(T_{k_0}-1)\frac{\gamma_{k_0}(\alpha_{k_0}+p_{k_0})}{\alpha_{k_0}}-\frac{\gamma_{k_0+1}}{\alpha_{k_0+1}}(1-\alpha_{k_0+1})-(T_{k_0+1}-1)\frac{\gamma_{k_0+1}p_{k_0+1}}{\alpha_{k_0+1}}\\
    &=\frac{\gamma_{k_0}}{\alpha_{k_0}}\big(1+(T_{k_0}-1)(\alpha_{k_0}+p_{k_0})\big)-\frac{\gamma_{k_0+1}}{\alpha_{k_0+1}}\big((1-\alpha_{k_0+1})+(T_{k_0+1}-1)p_{k_0+1}\big)\\
    &\overset{\eqref{alpha-gamma}}{\geq}\frac{\gamma_{k_0}}{\alpha_{k_0}}\Big(1+(T_{k_0}-1)(\alpha_{k_0}+p_{k_0})-\frac{64}{49}\big((1-\alpha_{k_0+1})+(T_{k_0+1}-1)p_{k_0+1}\big)\Big)\\
   &=\frac{\gamma_{k_0}}{\alpha_{k_0}}\Big(\Big(1-\frac{64}{343}\Big)T_{k_0}-\frac{16}{49}+\frac{64}{343}\Big)\\
    &\geq\frac{33\gamma_{k_0}T_{k_0}}{49\alpha_{k_0}}
    \geq \frac{2\gamma_{k_0}T_{k_0}}{3\alpha_{k_0}}
    \geq\frac{2(\rho_{k_0+1}-\rho_{k_0})}{\rho_{k_0+1}}\cL_{k_0},
\end{align*}
where the first inequality follows from \eqref{alpha-gamma}, $\alpha_{k_0+1} \leq 1$, and $T_{k_0+1} \geq 1$, the third equality is due to $\alpha_{k_0}=6/7$, $p_{k_0+1}=p_{k_0}=1/7$, and $T_{k_0+1}=T_{k_0}$, 
the second inequality follows from $T_{k_0}\geq1$ and $-16/49+64/343<0$, and the last inequality is due to 
$(\rho_{k_0+1}-\rho_{k_0})/\rho_{k_0+1}\leq1/3$ and $\cL_{k_0}= \gamma_{k_0}T_{k_0}/\alpha_{k_0}$. Hence, \eqref{lkrk} also holds for $k= k_0$. 

Case 3) $k>k_0$. Fix any $k>k_0$. Claim that 
\begin{align}
& \rho_{k+1} \geq \rho_k, \quad L_{\rho_{k+1}}   \leq 6L_{\rho_k}/5, \label{lrhoineq} \\
& \rho_{k+1}^{-1}(\rho_{k+1}-\rho_k) \leq 4(k-k_0+8)^{-1}/3 \label{rho-ineq}
\end{align}
hold for $\{\rho_k\}$ given in  \eqref{rho1} or \eqref{rho2}. Indeed, suppose that $\{\rho_k\}$ is given in  \eqref{rho1}. By this and $k>k_0$,  one has $\rho_k=3s^{2/3}(k-k_0+7)^{4/3}/32$. Using this, $k>k_0$, and \eqref{notation}, we obtain that $\rho_{k+1} \geq \rho_k$, and 
\begin{align*}
L_{\rho_{k+1}}  &=L_{\nabla \barf}+\rho_{k+1} L_{\nabla c^2} \leq \rho_{k+1}{\rho_k}^{-1} L_{\nabla \barf}+\rho_{k+1} L_{\nabla c^2}  =\rho_{k+1}{\rho_k}^{-1} L_{\rho_k}  \\ 
& \leq \Big(\frac{k-k_0+8}{k-k_0+7}\Big)^{4/3}L_{\rho_k}\leq\Big(\frac{8}{7}\Big)^{4/3}L_{\rho_k}\leq\frac{6L_{\rho_k}}{5}, \\
\rho_{k+1}-\rho_k & =3s^{2/3}((k-k_0+8)^{4/3}-(k-k_0+7)^{4/3} )/32\leq s^{2/3}(k-k_0+8)^{1/3}/8, \\ 
&= 4\rho_{k+1}(k-k_0+8)^{-1}/3,
\end{align*}
where the last inequality is due to the convexity of the function $\tau^{4/3}$. Hence, \eqref{lrhoineq} and \eqref{rho-ineq} hold for $\{\rho_k\}$ given in  \eqref{rho1}. In addition, if $\{\rho_k\}$ is given in  \eqref{rho2}, one has $\rho_k=3\sqrt{s}(k-k_0+7)/16$ due to $k>k_0$. By this and similar arguments as above, one can show that \eqref{lrhoineq} and \eqref{rho-ineq} also hold for $\{\rho_k\}$ given in  \eqref{rho2}. Hence, the above claim holds as desired.

Further, by  \eqref{notation} and $\rho_{k+1} \geq \rho_k$, one can see that  $L_{\rho_k}\leq L_{\rho_{k+1}}$. In addition, it follows from \eqref{def-para3},   \eqref{rho1}, \eqref{rho2}, \eqref{notation}, and $k>k_0$ that $\alpha_k=6/(k-k_0+7)$, $\alpha_{k+1}=6/(k-k_0+8)$, $p_{k}=p_{k+1}=1/7$, $\gamma_k=1/(8L_{\rho_k}\alpha_k)$, $\gamma_{k+1}=1/(8L_{\rho_{k+1}}\alpha_{k+1})$, $T_k=T_{k+1}=T_{k_0}$,  $\rho_k=3s^{2/3}(k-k_0+7)^{4/3}/32$ or $\rho_k=3\sqrt{s}(k-k_0+7)/16$. By these relations, \eqref{Lk},  \eqref{lrhoineq}, and \eqref{rho-ineq},  one has that 
\begin{align*}
    &\cL_k-\mathcal{R}_{k+1}\overset{\eqref{Lk}}{=}\frac{\gamma_k}{\alpha_k}+(T_k-1)\gamma_k+(T_k-1)\frac{\gamma_kp_k}{\alpha_k}-\frac{\gamma_{k+1}}{\alpha_{k+1}}+\gamma_{k+1}-(T_{k+1}-1)\frac{\gamma_{k+1}p_{k+1}}{\alpha_{k+1}}\\
    &=\frac{1}{8L_{\rho_{k}}\alpha_k^2}+(T_k-1)\gamma_k+(T_k-1)\frac{p_k}{8L_{\rho_{k}}\alpha_k^2}-\frac{1}{8L_{\rho_{k+1}}\alpha_{k+1}^2}+\gamma_{k+1}-(T_k-1)\frac{p_k}{8L_{\rho_{k+1}}\alpha_{k+1}^2}\\
    &=(T_k-1)\Big(\gamma_k-\frac{p_k}{8L_{\rho_{k+1}}\alpha_{k+1}^2}+\frac{p_k}{8L_{\rho_{k}}\alpha_{k}^2}\Big)+\gamma_{k+1}-\frac{1}{8L_{\rho_{k+1}}\alpha_{k+1}^2}+\frac{1}{8L_{\rho_{k}}\alpha_{k}^2}\\
    &\geq (T_k-1)\Big(\gamma_k-\frac{p_k}{8L_{\rho_k}}\Big(\frac{1}{\alpha_{k+1}^2}-\frac{1}{\alpha_{k}^2}\Big)\Big)+\gamma_{k+1}-\frac{1}{8L_{\rho_{k+1}}}\Big(\frac{1}{\alpha_{k+1}^2}-\frac{1}{\alpha_{k}^2}\Big) \ (\text{due to} \ L_{\rho_k}\leq L_{\rho_{k+1}})\\
    &=(T_k-1)\Big(\gamma_k-\frac{p_k}{8L_{\rho_k}}\frac{(k-k_0+8)^2-(k-k_0+7)^2}{36}\Big)+\gamma_{k+1}-\frac{1}{8L_{\rho_{k+1}}}\frac{(k-k_0+8)^2-(k-k_0+7)^2}{36}  \\
    &=(T_k-1)\Big(\gamma_k-\frac{p_k}{8L_{\rho_k}}\frac{2(k-k_0)+15}{36}\Big)+\gamma_{k+1}-\frac{1}{8L_{\rho_{k+1}}}\frac{2(k-k_0)+15}{36}\\
    &\geq(T_k-1)\Big(\gamma_k-\frac{p_k}{8L_{\rho_k}}\frac{3(k-k_0+7)}{36}\Big)+\gamma_{k+1}-\frac{1}{8L_{\rho_{k+1}}}\frac{2(k-k_0+8)}{36} \\
&=(T_k-1)\Big(\gamma_k-\frac{p_k}{16L_{\rho_k}\alpha_k}\Big)+\gamma_{k+1}-\frac{1}{24L_{\rho_{k+1}}\alpha_{k+1}} \ (\text{due to} \ \alpha_k=6/(k-k_0+7)) \\
    &=(T_k-1)\Big(\gamma_k-\frac{p_k\gamma_k}{2}\Big)+\gamma_{k+1}-\frac{\gamma_{k+1}}{3}  \ (\text{due to} \ \gamma_k=1/(8L_{\rho_k}\alpha_k)) \\
    &=\frac{13}{14}(T_k-1)\gamma_k+\frac{2\gamma_{k+1}}{3}
    =\frac{13}{14}(T_k-1)\gamma_k+\frac{1}{12L_{\rho_{k+1}}\alpha_{k+1}} \ (\text{due to} \ p_k=1/7, \ \gamma_{k+1}=1/(8L_{\rho_{k+1}}\alpha_{k+1})) \\
    &\geq\frac{13}{14}(T_k-1)\gamma_k+\frac{1}{12L_{\rho_{k+1}}\alpha_{k}}
    \overset{\eqref{lrhoineq}}{\geq}\frac{13}{14}(T_k-1)\gamma_k+\frac{5}{72L_{\rho_{k}}\alpha_{k}}   \ (\text{due to} \ \alpha_{k+1} \leq \alpha_k)
    \\
    &=\frac{13}{14}(T_k-1)\gamma_k+\frac{5\gamma_{k}}{9}
    \geq \frac{8}{3(k-k_0+8)}\frac{k-k_0+7}{6}(T_k-1)\gamma_k+\frac{8}{3(k-k_0+8)}\frac{k-k_0+7}{6}\gamma_k\\
    &=\frac{8}{3(k-k_0+8)}\Big((T_k-1)\frac{\gamma_k}{\alpha_k}+\frac{\gamma_k}{\alpha_k}\Big) \ (\text{due to} \  \alpha_k=6/(k-k_0+7)) \\
   & \geq\frac{8}{3(k-k_0+8)}\Big((T_k-1)\frac{\gamma_k(\alpha_k+p_k)}{\alpha_k}+\frac{\gamma_k}{\alpha_k}\Big) \ (\text{due to} \  \alpha_k+p_k \leq 1) \\
    &\overset{\eqref{rho-ineq}}{\geq}\frac{2(\rho_{k+1}-\rho_k)}{\rho_{k+1}}\Big((T_k-1)\frac{\gamma_k(\alpha_k+p_k)}{\alpha_k}+\frac{\gamma_k}{\alpha_k}\Big)
    \overset{\eqref{Lk}}{=}\frac{2(\rho_{k+1}-\rho_k)}{\rho_{k+1}}\cL_k,
\end{align*}
where the second equality follows from $\gamma_k=1/(8L_{\rho_k}\alpha_k)$, $\gamma_{k+1}=1/(8L_{\rho_{k+1}}\alpha_{k+1})$, and $T_k=T_{k+1}$, and the fourth equality is due to $\alpha_k=6/(k-k_0+7)$ and $\alpha_{k+1}=6/(k-k_0+8)$.  Hence, \eqref{lkrk} also holds for $k>k_0$. 
\end{proof}

We next derive upper bounds on $F_{{\rho_k}}(\tilde x_k)-F_{{\rho_k}}^*$ and $\bbE[F_{{\rho_k}}(\tilde x_k)-F_{{\rho_k}}^*]$. 

\begin{lemma}\label{l-exactbound2}
Suppose that Assumptions \ref{a1} and \ref{a3} hold, and that $\{\tilde x_{k}\}$ is generated by Algorithm \ref{alg2} with $\{q_i\}$, $\{\theta_t\}$, $\{\gamma_k\}$, $\{p_k\}$, $\{\alpha_k\}$ given in \eqref{def-para3},  and $\{T_k\}$, $\{\rho_k\}$ given in \eqref{rho1} or   \eqref{rho2}. Let $F_{{\rho_k}}^*$, $F_{{\rho_k}}$, $\cL_k$ and $\mathcal{R}_k$ be defined in \eqref{def-Frho2} and \eqref{Lk} with such  $\{\rho_k\}$, $\{\gamma_k\}$, $\{p_k\}$, $\{\alpha_k\}$, and $\{T_k\}$. Then for all $k \geq 1$, we have
\begin{align}
F_{\rho_{k}}(\tilde x_{k})-F_{\rho_{k}}^*&\leq \mathcal{R}_{k}^{-1}\Big(\frac{1}{40L_{\rho_1}}\big(D_F+\frac{\rho_1}{2}\|[c(x^0)]_+\|^2\big)+\frac{D_\cX^2}{2}+\frac{\Lambda^2}{2}\sum_{i=1}^{k-1}\frac{\gamma_iT_i}{\rho_i} \nn\\
&\qquad\quad\,\,\,+4\Lambda^2\sum_{i=1}^{k-1}\frac{\cL_i(\rho_{i+1}-\rho_{i})}{\rho_{i+1}^2}+2L_{\nabla \barf}D_\cX^2\sum_{i=1}^{k-1}\gamma_i T_i\Big),\label{l8-bound}\\
\bbE[F_{\rho_{k}}(\tilde x_{k})-F_{\rho_{k}}^*]&\leq \mathcal{R}_{k}^{-1}\Big(\frac{1}{40L_{\rho_1}}\big(D_F+\frac{\rho_1}{2}\|[c(x^0)]_+\|^2\big)+\frac{D_\cX^2}{2} +\frac{\Lambda^2}{2}\sum_{i=1}^{k-1}\frac{\gamma_iT_i}{\rho_i}\nn\\
&\qquad\quad\,\,\,+4\Lambda^2\sum_{i=1}^{k-1}\frac{\cL_i(\rho_{i+1}-\rho_{i})}{\rho_{i+1}^2}\Big), \label{expbound}
\end{align}
where $L_{\nabla \barf}$,  $D_{\mathcal{X}}$, $D_F$, and $\Lambda$ are given  in \eqref{Lam} and Assumptions \ref{a1}, respectively. 
\end{lemma}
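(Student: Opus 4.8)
The plan is to reuse the architecture of the proof of Lemma~\ref{l-exactbound3}, but with two changes: take the optimal value $F^*$ of \eqref{prob1} (rather than $F_{\rho}^*$) as the common reference point so that the per-outer-iteration estimates chain across iterations, and insert the penalty-change correction already developed in the proof of Lemma~\ref{l-sum}. Let $x^*$ be an optimal solution of \eqref{prob1}, so $F_{\rho_k}(x^*)=F^*$ for all $k$, and recall $F^*_{\rho_k}\le F^*$ with $\rho\mapsto F^*_{\rho}$ nondecreasing. First I would verify the stepsize conditions \eqref{l7-1}--\eqref{l7-2} of Lemma~\ref{l-iter2} for the parameters in \eqref{def-para3}: since $\gamma_k=1/(8L_{\rho_k}\alpha_k)$ one gets $L_{\rho_k}\alpha_k\gamma_k=1/8$, hence $1-L_{\rho_k}\alpha_k\gamma_k=7/8>0$ and $L_{\nabla \barf}\alpha_k\gamma_k/(1-L_{\rho_k}\alpha_k\gamma_k)=L_{\nabla \barf}/(7L_{\rho_k})\le 1/7=p_k$. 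Then I would apply \eqref{l7-exact} with $x=x^*$, control $-\gamma_k\langle\delta_t,z_t-x^*\rangle$ by $2\gamma_kL_{\nabla \barf}D_\cX^2$ using $\|\delta_t\|\le 2L_{\nabla \barf}D_\cX$ (established in the proof of Lemma~\ref{l-exactbound3}, independently of $\rho_k$), and sum over $t=1,\dots,T_k$ using $z_0=\tilde z_k$, $z_{T_k}=\tilde z_{k+1}$, the weights $\theta_t$, the identity $\cL_k=\sum_t\theta_t$, and convexity of $F_{\rho_k}$ exactly as in the derivation of \eqref{l9-2}. This yields
\[\cL_k\big(F_{\rho_k}(\tilde x_{k+1})-F^*\big)\le\mathcal{R}_k\big(F_{\rho_k}(\tilde x_k)-F^*\big)+\tfrac12\|\tilde z_k-x^*\|^2-\tfrac12\|\tilde z_{k+1}-x^*\|^2+2\gamma_kT_kL_{\nabla \barf}D_\cX^2,\]
and, starting instead from \eqref{l7-exp} (whose right-hand side has no $\delta_t$ term, by \eqref{mean}), the same identity without the final summand.

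Next I would convert the left-hand side from $F_{\rho_k}$ to $F_{\rho_{k+1}}$, mimicking \eqref{l3-0} with $\beta_k\gamma_k$ replaced by $\cL_k$ and $(\beta_k-1)\gamma_k$ by $\mathcal{R}_k$. The two facts needed are $F^*_{\rho_{k+1}}\ge F^*_{\rho_k}$ and $\cL_k-\mathcal{R}_k=\gamma_kT_k$ (immediate from \eqref{Lk}), which give $\cL_k(F^*_{\rho_{k+1}}-F^*)-\mathcal{R}_k(F^*_{\rho_k}-F^*)\ge\gamma_kT_k(F^*_{\rho_k}-F^*)\ge-\gamma_kT_k\Lambda^2/(2\rho_k)$ after minimizing $\tfrac{\rho_k}{2}t^2-\Lambda t$ over $t=\|[c(x_k^*)]_+\|$ and using \eqref{opt-gap}. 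Combining this with $F_{\rho_k}(\tilde x_{k+1})=F_{\rho_{k+1}}(\tilde x_{k+1})-\tfrac{\rho_{k+1}-\rho_k}{2}\|[c(\tilde x_{k+1})]_+\|^2$ and the squared feasibility estimate $\|[c(\tilde x_{k+1})]_+\|^2\le 8\Lambda^2/\rho_{k+1}^2+4\big(F_{\rho_{k+1}}(\tilde x_{k+1})-F^*_{\rho_{k+1}}\big)/\rho_{k+1}$ obtained from \eqref{feasibility} and $(a+b)^2\le 2a^2+2b^2$, the Step~1 inequality becomes
\[\cL_k\Big(1-\tfrac{2(\rho_{k+1}-\rho_k)}{\rho_{k+1}}\Big)\big(F_{\rho_{k+1}}(\tilde x_{k+1})-F^*_{\rho_{k+1}}\big)\le\mathcal{R}_k\big(F_{\rho_k}(\tilde x_k)-F^*_{\rho_k}\big)+\tfrac12\|\tilde z_k-x^*\|^2-\tfrac12\|\tilde z_{k+1}-x^*\|^2+\tfrac{\gamma_kT_k\Lambda^2}{2\rho_k}+\tfrac{4\cL_k(\rho_{k+1}-\rho_k)\Lambda^2}{\rho_{k+1}^2}+2\gamma_kT_kL_{\nabla \barf}D_\cX^2,\]
with the last term absent in the expectation version.

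Finally I would invoke Lemma~\ref{LkRk}, $\cL_k\big(1-2(\rho_{k+1}-\rho_k)/\rho_{k+1}\big)\ge\mathcal{R}_{k+1}$, together with $F_{\rho_{k+1}}(\tilde x_{k+1})-F^*_{\rho_{k+1}}\ge 0$, so that the recursion reads $\mathcal{R}_{k+1}\big(F_{\rho_{k+1}}(\tilde x_{k+1})-F^*_{\rho_{k+1}}\big)\le\mathcal{R}_k\big(F_{\rho_k}(\tilde x_k)-F^*_{\rho_k}\big)+b_k$; summing from $1$ to $k-1$, the quadratic $z$-terms telescope to at most $D_\cX^2/2$, the initial term is $\mathcal{R}_1\big(F_{\rho_1}(x^0)-F^*_{\rho_1}\big)\le\mathcal{R}_1\big(D_F+\tfrac{\rho_1}{2}\|[c(x^0)]_+\|^2\big)$, and since $T_1=1$ in both \eqref{rho1} and \eqref{rho2} one computes $\mathcal{R}_1=7/(288L_{\rho_1})\le 1/(40L_{\rho_1})$; dividing by $\mathcal{R}_k$ yields \eqref{l8-bound}, and repeating the argument from the expectation recursion yields \eqref{expbound}. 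The main obstacle is the weight bookkeeping in the first two steps: one must keep careful track of which error term carries the weight $\gamma_kT_k$ and which carries $\cL_k$, so that $\tfrac{\Lambda^2}{2}\sum_i\gamma_iT_i/\rho_i$ and $4\Lambda^2\sum_i\cL_i(\rho_{i+1}-\rho_i)/\rho_{i+1}^2$ appear with exactly the stated constants---the identity $\cL_k-\mathcal{R}_k=\gamma_kT_k$ makes the former come out cleanly, and the $(a+b)^2\le 2a^2+2b^2$ split of \eqref{feasibility} produces the factor $4$ in the latter. Everything else is a direct transcription of the computations behind Lemmas~\ref{l-exactbound3} and~\ref{l-sum}.
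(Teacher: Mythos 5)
Your proposal is correct and follows essentially the same route as the paper's proof: apply \eqref{l7-exact}/\eqref{l7-exp} with the optimizer $x^*$ of \eqref{prob2} as reference (so $F_{\rho_k}(x^*)=F^*$), sum over the inner loop to get the $\cL_k$--$\mathcal{R}_k$ recursion, shift from $F_{\rho_k}$ to $F_{\rho_{k+1}}$ via $\cL_k-\mathcal{R}_k=\gamma_kT_k$, Young's inequality, and the squared form of \eqref{feasibility}, then invoke Lemma~\ref{LkRk} and telescope with $\mathcal{R}_1=7/(288L_{\rho_1})\le 1/(40L_{\rho_1})$. All the constants ($\Lambda^2\gamma_kT_k/(2\rho_k)$, $4\Lambda^2\cL_k(\rho_{k+1}-\rho_k)/\rho_{k+1}^2$, $2L_{\nabla\barf}D_\cX^2\gamma_kT_k$) come out exactly as in the paper's derivation of \eqref{lem8-ineq1} and \eqref{l8-3}.
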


\begin{proof}
Notice from  \eqref{def-para3} and \eqref{notation} that $1/(8L_{\rho_k}\alpha_k)=\gamma_k \leq 1/(8L_{\nabla \barf}\alpha_k)$, which implies that 
$L_{\rho_k}\alpha_k\gamma_k=1/8$ and $L_{\nabla \barf}\alpha_k\gamma_k \leq 1/8$. By these, one can easily verify that  the assumption of Lemma \ref{l-iter2} holds for $\{q_i\}$, $\{\alpha_k\}$, $\{\gamma_k\}$ and $\{p_k\}$ given in \eqref{def-para3}. Let $x^*$ be an arbitrary optimal solution of problem \eqref{prob2}. It then follows from \eqref{deltabound}, $F_{{\rho_k}}(x^*)=F^*$, and \eqref{l7-exact} with $x=x^*$ that 
\begin{align}
    \frac{\gamma_k}{\alpha_k}(F_{{\rho_k}}(x_t)-F^*)&\leq 
 \frac{\gamma_k}{\alpha_k}(1-\alpha_k-p_k)(F_{{\rho_k}}(x_{t-1})-F^*)+\frac{\gamma_kp_k}{\alpha_k} (F_{{\rho_k}}(\tilde x_{k})-F^*)\nn\\
 &\quad +\frac{1}{2}\big(\|z_{t-1}-x^*\|^2-\|z_t-x^*\|^2\big) +2\gamma_kL_{\nabla \barf}D_\cX^2.\label{l8-1}
\end{align}
Summing up \eqref{l8-1} from $t=1$ to $T_k$, and using the fact that $x_0=\tilde x_{k}$, $z_0=\tilde z_{k}$  and $z_{T_k}=\tilde z_{k+1}$, we have
\begin{align}
\sum_{t=1}^{T_k}\frac{\gamma_k}{\alpha_k}(F_{\rho_k}(x_t)-F^*)
&\leq \frac{\gamma_k}{\alpha_k}(1-\alpha_k-p_k)(F_{\rho_k}(x_{0})-F^*)+ \sum_{t=2}^{T_k}\frac{\gamma_k}{\alpha_k}(1-\alpha_k-p_k)(F_{\rho_k}(x_{t-1})-F^*)\nn\\
&\quad+\frac{T_k\gamma_kp_k}{\alpha_k}(F_{\rho_k}(\tilde x_{k})-F^*)+\frac{1}{2}\big(\|z_{0}-x^*\|^2-\|z_{T_k}-x^*\|^2\big)+2\gamma_k T_k L_{\nabla \barf}D_\cX^2\nn\\
&= \frac{\gamma_k}{\alpha_k}(1-\alpha_k-p_k)(F_{\rho_k}(\tilde x_{k})-F^*)+ \sum_{t=1}^{T_k-1}\frac{\gamma_k}{\alpha_k}(1-\alpha_k-p_k)(F_{\rho_k}(x_{t})-F^*)\nn\\
&\quad+\frac{T_k\gamma_kp_k}{\alpha_k}(F_{\rho_k}(\tilde x_{k})-F^*)
+\frac{1}{2}\big(\|\tilde z_{k}-x^*\|^2-\|\tilde z_{k+1}-x^*\|^2\big)+2\gamma_k T_k L_{\nabla \barf}D_\cX^2.\label{l8-sum}
\end{align}
 By the  definitions of $\theta_t$ and $\cL_k$ in  \eqref{def-para3} and \eqref{Lk}, one can observe that $\cL_k=\sum_{t=1}^{T_k}\theta_t$. Using this, \eqref{l8-sum}, the definition of $\theta_t$ in \eqref{def-para3}, the convexity of $F_{\rho_k}(\cdot)$, and $\tilde x_{k+1}=\sum_{t=1}^{T_k}(\theta_tx_t)/(\sum_{t=1}^{T_k}\theta_t)$, we have
    \begin{align}
        & \cL_{k}(F_{\rho_k}(\tilde x_{k+1})-F^*)=\Big(\sum_{t=1}^{T_k}\theta_t\Big)(F_{\rho_k}(\tilde x_{k+1})-F^*) \leq\sum_{t=1}^{T_k}\theta_t(F_{\rho_k}(x_t)-F^*) \nn \\
        & \overset{\eqref{def-para3}}=\sum_{t=1}^{T_k}\frac{\gamma_k}{\alpha_k}(F_{\rho_k}(x_t)-F^*)-\sum_{t=1}^{T_k-1}\frac{\gamma_k}{\alpha_k}(1-\alpha_k-p_k)(F_{\rho_k}(x_t)-F^*)\nn \\
        &\overset{\eqref{l8-sum}}{\leq} \Big(\frac{\gamma_k}{\alpha_k}(1-\alpha_k)+(T_k-1)\frac{\gamma_kp_k}{\alpha_k}\Big)(F_{\rho_k}(\tilde x_{k})-F^*)
    +\frac{1}{2}\big(\|\tilde z_{k}-x^*\|^2-\|\tilde z_{k+1}-x^*\|^2\big)+2\gamma_k T_k L_{\nabla \barf}D_\cX^2, \nn \\
&\overset{\eqref{Lk}}{=}\mathcal{R}_{k}(F_{\rho_k}(\tilde x_{k})-F^*)
    +\frac{1}{2}\big(\|\tilde z_{k}-x^*\|^2-\|\tilde z_{k+1}-x^*\|^2\big)+2\gamma_k T_k L_{\nabla \barf}D_\cX^2. 
\label{l8-2}
    \end{align}

Let $x_k^*$ be an arbitrary optimal solution of \eqref{def-Frho2}. It then follows that $F_{\rho_k}^*=F_{\rho_k}(x_k^*)$. Notice from \eqref{def-Frho2} and  \eqref{Lk} that  $F_{\rho_{k+1}}^*\geq F_{\rho_k}^*$ and $\cL_k-\mathcal{R}_k=\gamma_kT_k\geq 0$. Using these, \eqref{prob2},  \eqref{def-Frho2}, \eqref{opt-gap}, \eqref{feasibility}, \eqref{Lk}, and \eqref{l8-2}, we obtain that for all $k\geq1$,
\begin{align}
&\frac{1}{2}\|\tilde z_{k}-x^*\|^2-\frac{1}{2}\|\tilde z_{k+1}-x^*\|^2+2\gamma_kT_k L_{\nabla \barf}D_\cX^2 \overset{\eqref{l8-2}}{\geq}\cL_k(F_{\rho_k}(\tilde x_{k+1})-F^*)- \mathcal{R}_k(F_{\rho_k}(\tilde x_{k})-F^*) \nn\\
&=\cL_k\big(F_{\rho_k}(\tilde x_{k+1})-F_{\rho_{k+1}}^*\big)-\mathcal{R}_k\big(F_{\rho_k}(\tilde x_{k})-F_{\rho_{k}}^*\big)+\cL_k\big(F_{\rho_{k+1}}^*-F^*\big)- \mathcal{R}_k\big(F_{\rho_{k}}^*-F^*\big)\nn\\
&\geq\cL_k\big(F_{\rho_k}(\tilde x_{k+1})-F_{\rho_{k+1}}^*\big)-\mathcal{R}_k\big(F_{\rho_k}(\tilde x_{k})-F_{\rho_{k}}^*\big)+(\cL_k-\mathcal{R}_k)\big(F_{\rho_k}^*-F^*\big)\nn\\
&\overset{\eqref{Lk}}{=}\cL_k\big(F_{\rho_k}(\tilde x_{k+1})-F_{\rho_{k+1}}^*\big)-\mathcal{R}_k\big(F_{\rho_k}(\tilde x_{k})-F_{\rho_{k}}^*\big)+\gamma_kT_k\big(F_{\rho_k}^*-F^*\big)\nn\\
&\overset{\eqref{prob2}\eqref{def-Frho2}}{=}\cL_k\big(F_{\rho_k}(\tilde x_{k+1})-F_{\rho_{k+1}}^*\big)-\mathcal{R}_k\big(F_{\rho_k}(\tilde x_{k})-F_{\rho_{k}}^*\big)+\gamma_kT_k\big(F(x_k^*)-F^*\big)+\frac{\gamma_kT_k\rho_k}{2}\|[c(x_k^*)]_+\|^2\nn\\
&\overset{\eqref{opt-gap}}{\geq}\cL_k\big(F_{\rho_k}(\tilde x_{k+1})-F_{\rho_{k+1}}^*\big)-\mathcal{R}_k\big(F_{\rho_k}(\tilde x_{k})-F_{\rho_{k}}^*\big)-\gamma_kT_k \Lambda \|[c(x_k^*)]_+\|+\frac{\gamma_kT_k\rho_k}{2}\|[c(x_k^*)]_+\|^2\nn\\
&\geq\cL_k\big(F_{\rho_k}(\tilde x_{k+1})-F_{\rho_{k+1}}^*\big)-\mathcal{R}_k\big(F_{\rho_k}(\tilde x_{k})-F_{\rho_{k}}^*\big)- \frac{\gamma_kT_k\Lambda^2}{2\rho_k} \ (\text{due to Young's inequality}) \nn\\
&\overset{\eqref{def-Frho2}}{=}\cL_k\big(F_{\rho_{k+1}}(\tilde x_{k+1})-F_{\rho_{k+1}}^*\big)-\mathcal{R}_k\big(F_{\rho_{k}}(\tilde x_{k})-F_{\rho_{k}}^*\big)- \frac{\gamma_kT_k\Lambda^2}{2\rho_k}-\frac{\cL_k(\rho_{k+1}-\rho_{k})}{2}\|[c(\tilde x_{k+1})]_+\|^2\nn\\
&\overset{\eqref{feasibility}}{\geq}\cL_k\big(F_{\rho_{k+1}}(\tilde x_{k+1})-F_{\rho_{k+1}}^*\big)-\mathcal{R}_k\big(F_{\rho_{k}}(\tilde x_{k})-F_{\rho_{k}}^*\big)- \frac{\gamma_kT_k\Lambda^2}{2\rho_k}\nn\\
&\quad\,\,-\frac{\cL_k(\rho_{k+1}-\rho_{k})}{2}\left(\frac{2\Lambda}{\rho_{k+1}}+\sqrt{\frac{2(F_{\rho_{k+1}}(\tilde x_{k+1})-F_{\rho_{k+1}}^*)}{\rho_{k+1}}}\,\right)^2\nn\\
&\geq \cL_k\Big(1-\frac{2(\rho_{k+1}-\rho_{k})}{\rho_{k+1}}\Big)\big(F_{\rho_{k+1}}(\tilde x_{k+1})-F_{\rho_{k+1}}^*\big)-\mathcal{R}_k\big(F_{\rho_{k}}(\tilde x_{k})-F_{\rho_{k}}^*\big)- \frac{\gamma_kT_k\Lambda^2}{2\rho_k}\nn\\
&\quad-\frac{4\cL_k(\rho_{k+1}-\rho_{k})\Lambda^2}{\rho_{k+1}^2}, \label{lem8-ineq1}
\end{align}
where the last inequality is due to $(a+b)^2 \leq 2(a^2+b^2)$ for all $a,b\in\rr$.
It then follows from \eqref{lkrk}, \eqref{lem8-ineq1}, and $F_{\rho_{i+1}}(\tilde x_{i+1})\geq F_{\rho_{i+1}}^*$ that for all $i\geq 1$, 
\begin{align}
&\mathcal{R}_{i+1}(F_{\rho_{i+1}}(\tilde x_{i+1})-F_{\rho_{i+1}}^*)
\overset{\eqref{lkrk}}{\leq}\cL_i\Big(1-\frac{2(\rho_{i+1}-\rho_{i})}{\rho_{i+1}}\Big)(F_{\rho_{i+1}}(\tilde x_{i+1})-F_{\rho_{i+1}}^*)\nn\\
&\overset{\eqref{lem8-ineq1}}{\leq} \mathcal{R}_i(F_{\rho_i}(\tilde x_{i})-F_{\rho_{i}}^*)
    +\frac{1}{2}\big(\|\tilde z_{i}-x^*\|^2-\|\tilde z_{i+1}-x^*\|^2\big)+2\gamma_iT_i L_{\nabla \barf}D_\cX^2+\frac{\gamma_iT_i\Lambda^2}{2\rho_i}+\frac{4\cL_i(\rho_{i+1}-\rho_{i})\Lambda^2}{\rho_{i+1}^2}.\label{l8-3}
\end{align}  
By \eqref{def-para3},  \eqref{rho1}, \eqref{rho2}, \eqref{notation}, and \eqref{Lk}, one can verify that $\mathcal{R}_1=7/(288L_{\rho_1})<1/(40L_{\rho_1})$. Summing up \eqref{l8-3} from $i=1$ to $k-1$, and using Assumption \ref{a1}(i), $\tilde x_1=\tilde z_1=x^0$, and $\mathcal{R}_1<1/(40L_{\rho_1})$, we have
\begin{align*}
\mathcal{R}_{k}(F_{\rho_{k}}(\tilde x_{k})-F_{\rho_{k}}^*)
&\leq \frac{1}{40L_{\rho_1}}(F_{\rho_1}(x^0)-F_{\rho_{1}}^*)+\frac{1}{2}\|x^0-x^*\|^2+2L_{\nabla \barf}D_\cX^2\sum_{i=1}^{k-1}\gamma_i T_i \nn\\
&\quad+\frac{\Lambda^2}{2}\sum_{i=1}^{k-1}\frac{\gamma_iT_i}{\rho_i}+4\Lambda^2\sum_{i=1}^{k-1}\frac{\cL_i(\rho_{i+1}-\rho_{i})}{\rho_{i+1}^2}.
\end{align*}
The conclusion \eqref{l8-bound} then follows from this and the fact that  
\[
F_{\rho_1}(x^0)-F_{\rho_{1}}^*\overset{\eqref{def-Frho2}}{\leq} F(x^0)-F(x^*_1)+\frac{\rho_1}{2}\|[c(x^0)]_+\|^2 \leq D_F+\frac{\rho_1}{2}\|[c(x^0)]_+\|^2,
\]
where the last inequality is due to Assumption \ref{a1}(i). In addition, the proof of \eqref{expbound} follows from \eqref{l7-exp} and similar arguments as above, and is thus omitted.
\end{proof}

We are now ready to prove Theorem \ref{thm:finite-sum}.

\begin{proof}[\textbf{Proof of Theorem \ref{thm:finite-sum}}]
Using  \eqref{opt-gap} and \eqref{feasibility} with $\rho=\rho_k$ and $x=\tilde x_k$, we have
\begin{align}
\|[c(\tilde x_k)]_+
\| &\overset{\eqref{feasibility}}{\leq} 
\frac{2\Lambda}{\rho_{k}}+\sqrt{\frac{2(F_{\rho_{k}}(\tilde x_k)-F_{\rho_{k}}^*)}{\rho_{k}}},\label{t2-cons}\\
\bbE[\|[c(\tilde x_k)]_+\|] &  \overset{\eqref{t2-cons}}{\leq} 
\frac{2\Lambda}{\rho_{k}}+\bbE\left[\sqrt{\frac{2(F_{\rho_{k}}(\tilde x_k)-F_{\rho_{k}}^*)}{\rho_{k}}}\,\right] \leq \frac{2\Lambda}{\rho_{k}}+\sqrt{\frac{2\bbE\left[F_{\rho_{k}}(\tilde x_k)-F_{\rho_{k}}^*\right]}{\rho_{k}}}.\label{t2-obj}
\end{align}
Also, notice from \eqref{def-para3} that $\alpha_k+p_k \leq 1$. Moreover, as shown in the proof of Lemma \ref{LkRk}, the sequence $\{\rho_k\}$ defined in \eqref{rho1} or \eqref{rho2} is nondecreasing.

We first prove statement (i) of Theorem \ref{thm:finite-sum}, where $k_0$, $\{T_k\}$,  and $\{\rho_k\}$ are chosen as in \eqref{rho1}. Notice from \eqref{def-para3} and \eqref{rho1} that $k_0=\lfloor (4/3)\log_2 s\rfloor+1$, 
$\alpha_k=6/7$, $p_k=1/7$,  $T_k=\lceil2^{3(k-1)/4}\rceil$, $\rho_k=2^{k/2}$, and $\gamma_k=1/(8(L_{\nabla \barf}+2^{k/2}L_{\nabla c^2})$  for all $1\leq k\leq k_0$.  Using these relations and \eqref{Lk}, we obtain that 
\begin{align}
  & \cL_k\overset{\eqref{Lk}}{=}\frac{\gamma_k}{\alpha_k}+(T_k-1)\frac{\gamma_k(\alpha_k+p_k)}{\alpha_k}=\frac{T_k\gamma_k}{\alpha_k}=\frac{49\times\lceil2^{3(k-1)/4}\rceil}{288(L_{\nabla \barf}+2^{k/2}L_{\nabla c^2})}\leq\frac{49\times2^{k/4}}{288L_{\nabla c^2}} \qquad \forall 1\leq k\leq k_0, \label{libound} \\
& s/2\leq T_{k_0}=\lceil2^{3(k_0-1)/4}\rceil=\lceil2^{3\lfloor (4/3)\log_2 s\rfloor/4}\rceil\leq s, \quad 
2^{k_0/4}=2^{(\lfloor (4/3)\log_2 s\rfloor+1)/4}\leq2^{1/4}s^{1/3}. \label{Tk0-bnd}
\end{align}
By \eqref{def-para3}, \eqref{rho1},  \eqref{Lk}, \eqref{Tk0-bnd}, and $\alpha_k+p_k \leq 1$, one has that for all $k> k_0$, 
\begin{align}
&\mathcal{R}_k\overset{\eqref{Lk}}{=}\frac{\gamma_k}{\alpha_k}(1-\alpha_k)+(T_k-1)\frac{\gamma_kp_k}{\alpha_k}\geq\frac{\gamma_kp_kT_k}{\alpha_k}\overset{\eqref{def-para3}\eqref{rho1}}{=}\frac{T_{k_0}(k-k_0+7)^2}{2016(L_{\nabla \barf}+(3s^{2/3}(k-k_0+7)^{4/3}/32)L_{\nabla c^2})} \nn \\
&\quad \ \, \overset{\eqref{Tk0-bnd}}{\geq}\frac{s(k-k_0+7)^2}{4032(L_{\nabla \barf}+(3s^{2/3}(k-k_0+7)^{4/3}/32)L_{\nabla c^2})}, \label{Rk-lowbnd} \\
& \cL_k\overset{\eqref{Lk}}{=}\frac{\gamma_k}{\alpha_k}+(T_k-1)\frac{\gamma_k(\alpha_k+p_k)}{\alpha_k}\leq\frac{\gamma_kT_k}{\alpha_k} \overset{\eqref{def-para3}\eqref{rho1}}{=}\frac{T_{k_0}(k-k_0+7)^2}{288(L_{\nabla \barf}+(3s^{2/3}(k-k_0+7)^{4/3}/32)L_{\nabla c^2})} \nn \\
&\quad \ \, \overset{\eqref{Tk0-bnd}}{\leq}\frac{s(k-k_0+7)^2}{288(L_{\nabla \barf}+(3s^{2/3}(k-k_0+7)^{4/3}/32)L_{\nabla c^2})}  \leq\frac{s^{1/3}(k-k_0+7)^{2/3}}{27L_{\nabla c^2}}. \label{sub-ineq2}
\end{align}
In addition, observe that $\lceil2^{3(k-1)/4}\rceil\leq2^{3k/4}$ for all $k\geq 1$. Using this relation, $s\geq1$, \eqref{def-para3},  \eqref{rho1}, and \eqref{Tk0-bnd}, we obtain that for all $k\geq 1$, 
\begin{align}
&\sum_{i=1}^{k-1}\gamma_i T_i=\sum_{i=1}^{k_0}\gamma_i T_i+\sum_{i=k_0+1}^{k-1}\gamma_i T_i
\overset{\eqref{def-para3}\eqref{rho1}}{=}\sum_{i=1}^{k_0}\frac{7T_i}{48(L_{\nabla \barf}+\rho_i L_{\nabla c^2})}+\sum_{i=k_0+1}^{k-1}\frac{T_{k_0}}{8(L_{\nabla \barf}+\rho_i L_{\nabla c^2})\alpha_i} \nn\\
&\qquad\quad \overset{\eqref{Tk0-bnd}}{\leq}\frac{7}{48L_{\nabla c^2}}\sum_{i=1}^{k_0}\frac{T_i}{\rho_i}+\frac{s}{8L_{\nabla c^2}}\sum_{i=k_0+1}^{k-1}\frac{1}{\rho_i\alpha_i} \nn \\
&\qquad\quad\overset{\eqref{def-para3}\eqref{rho1}}{=} \frac{7}{48L_{\nabla c^2}}\sum_{i=1}^{k_0}\frac{\lceil2^{3(i-1)/4}\rceil}{2^{i/2}}+\frac{2s^{1/3}}{9L_{\nabla c^2}}\sum_{i=k_0+1}^{k-1}(i-k_0+7)^{-1/3}  \nn \\ 
&\qquad\quad\ \    \leq\frac{7}{48L_{\nabla c^2}}\sum_{i=1}^{k_0}\frac{2^{3i/4}}{2^{i/2}}+\frac{2s^{1/3}}{9L_{\nabla c^2}}  \int_{k_0+1}^k (\tau-k_0+6)^{-1/3} d\tau \nn\\
&\qquad\quad\ \ \leq\frac{7\times 2^{k_0/4}}{48(1-2^{-1/4})L_{\nabla c^2}}+\frac{s^{1/3}(k-k_0+7)^{2/3}}{3L_{\nabla c^2}} 
  \overset{\eqref{Tk0-bnd}}{\leq} \frac{11s^{1/3}}{10L_{\nabla c^2}}+\frac{s^{1/3}(k-k_0+7)^{2/3}}{3L_{\nabla c^2}}, 
 \label{sum-gt}\\
&\sum_{i=1}^{k-1}\frac{\gamma_iT_i}{\rho_i}=\sum_{i=1}^{k_0}\frac{\gamma_iT_i}{\rho_i}+\sum_{i=k_0+1}^{k-1}\frac{\gamma_iT_i}{\rho_i}
\overset{\eqref{def-para3}\eqref{rho1}}{=}\sum_{i=1}^{k_0}\frac{7T_i}{48(L_{\nabla \barf}+\rho_i L_{\nabla c^2})\rho_i}+\sum_{i=k_0+1}^{k-1}\frac{T_{k_0}}{8(L_{\nabla \barf}+\rho_i L_{\nabla c^2})\rho_i\alpha_i} \nn\\
&\qquad\quad  \overset{\eqref{Tk0-bnd}}{\leq}\frac{7}{48L_{\nabla c^2}}\sum_{i=1}^{k_0}\frac{T_i}{\rho_i^2}+\frac{s}{8L_{\nabla c^2}}\sum_{i=k_0+1}^{k-1}\frac{1}{\rho_i^2\alpha_i} \nn\\
&\qquad\quad \overset{\eqref{def-para3}\eqref{rho1}}{=}\frac{7}{48L_{\nabla c^2}}\sum_{i=1}^{k_0}\frac{\lceil2^{3(i-1)/4}\rceil}{2^{i}}+ \frac{64s^{-1/3}}{27L_{\nabla c^2}}\sum_{i=k_0+1}^{k-1}(i-k_0+7)^{-5/3}\nn \\
&\qquad\quad  \leq \frac{7}{48L_{\nabla c^2}}\sum_{i=1}^{k_0}\frac{2^{3i/4}}{2^{i}}+\frac{64s^{-1/3}}{27L_{\nabla c^2}} \int^k_{k_0+1} (\tau-k_0+6)^{-5/3}d\tau \nn \\
&\qquad\quad \leq \frac{7}{48(2^{1/4}-1)L_{\nabla c^2}}+\frac{32}{27L_{\nabla c^2}} 
 \leq\frac{2}{L_{\nabla c^2}}.  \label{sum-gtr}
 \end{align}
Observe from \eqref{rho1} that $2^{k_0/2}=2^{(\lfloor(4/3)\log_2s\rfloor+1)/2}\geq s^{2/3}$. 
By this, \eqref{rho1},  and the convexity of the functions $2^{\tau/2}$ and $\tau^{4/3}$, one has
\begin{align} \label{rho-diff}
    \rho_{i+1}-\rho_i=\left\{\begin{array}{ll}
       2^{(i+1)/2}-2^{i/2}\leq (2^{(i+1)/2}\log2)/2=(\rho_{i+1}\log2)/2  & \text{if} \  \ 1\leq i< k_0, \\
       3s^{2/3}/2-2^{k_0/2}\leq s^{2/3}/2=\rho_{i+1}/3\leq(\rho_{i+1}\log2)/2 & \text{if} \  \  i= k_0,\\
       3s^{2/3}((i-k_0+8)^{4/3}-(i-k_0+7)^{4/3})/32 \leq s^{2/3}(i-k_0+8)^{1/3}/8  & \text{if} \  \ i> k_0.
       \end{array}\right. 
\end{align}
Using \eqref{rho1}, \eqref{libound}, \eqref{sub-ineq2}, \eqref{rho-diff}, $s\geq1$, and $\rho_{i+1} \geq \rho_i$, we obtain that 
\begin{align}
& \sum_{i=1}^{k-1}\frac{\cL_i(\rho_{i+1}-\rho_{i})}{\rho_{i+1}^2} =\sum_{i=1}^{k_0}\frac{\cL_i(\rho_{i+1}-\rho_{i})}{\rho_{i+1}^2}+\sum_{i=k_0+1}^{k-1}\frac{\cL_i(\rho_{i+1}-\rho_{i})}{\rho_{i+1}^2} \nn \\
&\overset{\eqref{rho-diff}}{\leq}\sum_{i=1}^{k_0}\frac{\cL_i\log2}{2\rho_{i+1}}+\sum_{i=k_0+1}^{k-1}\frac{s^{2/3}\cL_i(i-k_0+8)^{1/3}}{8\rho_{i+1}^2}\nn\\
& \leq\sum_{i=1}^{k_0}\frac{\cL_i\log2}{2\rho_{i}}+\sum_{i=k_0+1}^{k-1}\frac{s^{2/3}\cL_i(i-k_0+8)^{1/3}}{8\rho_{i+1}^2} \  \  \ (\text{due to} \ \rho_{i+1} \geq \rho_i) \nn\\
&\leq \frac{49\log2}{576L_{\nabla c^2}}\sum_{i=1}^{k_0}\frac{2^{i/4}}{2^{i/2}}+ \frac{128s^{-1/3}}{243L_{\nabla c^2}}\sum_{i=k_0+1}^{k-1}\frac{(i-k_0+7)^{2/3}(i-k_0+8)^{1/3}}{(i-k_0+8)^{8/3}} \  \  \  (\text{due to} \  \eqref{rho1}, \eqref{libound}, \eqref{sub-ineq2}) \nn \\
& \leq \frac{49\log2}{576(2^{1/4}-1)L_{\nabla c^2}}+ \frac{128}{243L_{\nabla c^2}}\sum_{i=k_0+1}^{k-1}(i-k_0+8)^{-5/3}
\leq\frac{49\log2}{576(2^{1/4}-1)L_{\nabla c^2}}+\frac{16}{81L_{\nabla c^2}}\leq\frac{1}{L_{\nabla c^2}}. \nn
\end{align}
It follows from this, \eqref{c3-5}, \eqref{notation}, \eqref{l8-bound}, \eqref{Rk-lowbnd}, \eqref{sum-gt}, \eqref{sum-gtr}, $\rho_1=\sqrt{2}$, and $s\geq 1$ that for all $k>k_0$, 
\begin{align}
&F_{\rho_{k}}(\tilde x_{k})-F_{\rho_{k}}^*\overset{\eqref{l8-bound}}{\leq} \mathcal{R}_{k}^{-1}\Big(\frac{D_F+\rho_1\|[c(x^0)]_+\|^2/2}{40L_{\rho_1}}+\frac{D_\cX^2}{2}+\frac{\Lambda^2}{2}\sum_{i=1}^{k-1}\frac{\gamma_iT_i}{\rho_i} +4\Lambda^2\sum_{i=1}^{k-1}\frac{\cL_i(\rho_{i+1}-\rho_{i})}{\rho_{i+1}^2} \nn \\
&\qquad\qquad\qquad\qquad\qquad\, +2L_{\nabla \barf}D_\cX^2\sum_{i=1}^{k-1}\gamma_i T_i\Big)\nn\\
&\leq \frac{4032L_{\nabla \barf}+378L_{\nabla c^2}s^{2/3}(k-k_0+7)^{4/3}}{s(k-k_0+7)^2}\Big(\frac{D_F+\sqrt{2}\|[c(x^0)]_+\|^2/2}{40(L_{\nabla \barf}+\sqrt{2}L_{\nabla c^2})}+\frac{D_\cX^2}{2}+\frac{\Lambda^2}{L_{\nabla c^2}}\nn\\
&\quad +\frac{4\Lambda^2}{L_{\nabla c^2}}+\frac{11L_{\nabla \barf}D_\cX^2s^{1/3}}{5L_{\nabla c^2}}+\frac{2L_{\nabla \barf}D_\cX^2s^{1/3}(k-k_0+7)^{2/3}}{3L_{\nabla c^2}}\Big)\nn\\
&\overset{\eqref{c3-5}}{\leq}\frac{\tC_5s^{-1}+\tC_7s^{-2/3}}{(k-k_0+7)^{2}}+\frac{\tC_6s^{-1/3}+\tC_8}{(k-k_0+7)^{2/3}}+\frac{\tC_9s^{-2/3}}{(k-k_0+7)^{4/3}}+252L_{\nabla \barf}D_\cX^2\nn\\
&{\leq}\frac{(\tC_5+\tC_7)s^{-2/3}}{(k-k_0+7)^{2}}+\frac{\tC_6+\tC_8}{(k-k_0+7)^{2/3}}+\frac{\tC_9s^{-2/3}}{(k-k_0+7)^{4/3}}+252L_{\nabla \barf}D_\cX^2,\label{t2-bound3}
\end{align}
where $\tC_5$, $\tC_6$, $\tC_7$, $\tC_8$, and $\tC_9$ are given in \eqref{c3-5}. Similarly, by \eqref{c3-5} and \eqref{expbound}, one has
\beq\label{t2-bound4}
\bbE[F_{\rho_{k}}(\tilde x_{k})-F_{\rho_{k}}^*]\leq\frac{\tC_5s^{-1}}{(k-k_0+7)^{2}}+\frac{\tC_6s^{-1/3}}{(k-k_0+7)^{2/3}}.
\eeq
Further, using \eqref{expbound}, \eqref{t2-cons}, \eqref{t2-obj},  and \eqref{t2-bound3}, we obtain that
\begin{align*}
\|[c(\tilde x_k)]_+\| & \overset{\eqref{t2-cons}\eqref{t2-bound3}}{\leq} \frac{2\Lambda}{\rho_k}+\sqrt{\frac{2(\tC_5+\tC_7)s^{-2/3}}{(k-k_0+7)^{2}\rho_k}+\frac{2(\tC_6+\tC_8)}{(k-k_0+7)^{2/3}\rho_k}+\frac{2\tC_9s^{-2/3}}{(k-k_0+7)^{4/3}\rho_k}+\frac{504L_{\nabla \barf}D_{\cX}^2}{\rho_k}}\nn\\
&\ \  \leq\frac{2\Lambda}{\rho_k}+\sqrt{\frac{2(\tC_5+\tC_7)s^{-2/3}}{(k-k_0+7)^{2}\rho_k}}+\sqrt{\frac{2(\tC_6+\tC_8)}{(k-k_0+7)^{2/3}\rho_k}}+\sqrt{\frac{2\tC_9s^{-2/3}}{(k-k_0+7)^{4/3}\rho_k}}+\sqrt{\frac{504L_{\nabla \barf}D_{\cX}^2}{\rho_k}}, \\
    \bbE[\|[c(\tilde x_k)]_+\|] & \overset{\eqref{t2-obj}\eqref{t2-bound4}} \leq\frac{2\Lambda}{\rho_k}+\sqrt{\frac{2\tC_5s^{-1}}{(k-k_0+7)^{2}\rho_k}+\frac{2\tC_6s^{-1/3}}{(k-k_0+7)^{2/3}\rho_k}} \\
    &\quad  \leq \frac{2\Lambda}{\rho_k}+\sqrt{\frac{2\tC_5s^{-1}}{(k-k_0+7)^{2}\rho_k}}+\sqrt{\frac{2\tC_6s^{-1/3}}{(k-k_0+7)^{2/3}\rho_k}},
\end{align*}
which, togther with $k\geq2(k_0-7)$, $\rho_k=3s^{2/3}(k-k_0+7)^{4/3}/32$ and $\bbE [F(\tilde x_k)-F^*] \geq -\Lambda\,\bbE[\|[c(\tilde x_k)]_+\|]$ due to \eqref{opt-gap}, implies that \eqref{thm3-ineq1}, \eqref{thm3-ineq1-exp}, and \eqref{thm3-ineq3} hold. In addition,  \eqref{thm3-ineq2} follows from $k\geq2(k_0-7)$, \eqref{opt-gap}, and \eqref{t2-bound4}.

We next prove statement (ii) of Theorem \ref{thm:finite-sum}, where $k_0$, $\{T_k\}$,  and $\{\rho_k\}$ are chosen as in \eqref{rho2}.  Notice from \eqref{def-para3} and \eqref{rho2} that $k_0=\lfloor \log_2 s\rfloor+1$, 
$\alpha_k=6/7$, $p_k=1/7$,  $T_k=2^{k-1}$, $\rho_k=2^{k/2}$, and $\gamma_k=1/(8(L_{\nabla \barf}+2^{k/2}L_{\nabla c^2})$  for all $1\leq k\leq k_0$.  Using these relations and \eqref{Lk}, we obtain that 
\begin{align}
  & \cL_k\overset{\eqref{Lk}}{=}\frac{\gamma_k}{\alpha_k}+(T_k-1)\frac{\gamma_k(\alpha_k+p_k)}{\alpha_k}=\frac{T_k\gamma_k}{\alpha_k}=\frac{49\times2^{k-1}}{288(L_{\nabla \barf}+2^{k/2}L_{\nabla c^2})}\leq\frac{49\times2^{k/2}}{576L_{\nabla c^2}} \qquad \forall 1\leq k\leq k_0, \label{libound-1} \\
& s/2\leq T_{k_0}=2^{k_0-1}=2^{\lfloor \log_2 s\rfloor}\leq s, \quad 2^{k_0/2}=2^{(\lfloor \log_2 s\rfloor+1)/2}\leq\sqrt{2s}. \label{Tk0-bnd-1}
\end{align}
By \eqref{def-para3}, \eqref{rho2},  \eqref{Lk}, \eqref{Tk0-bnd-1}, and $\alpha_k+p_k \leq 1$, one has that for all $k> k_0$, 
\begin{align}
&\mathcal{R}_k\overset{\eqref{Lk}}{=}\frac{\gamma_k}{\alpha_k}(1-\alpha_k)+(T_k-1)\frac{\gamma_kp_k}{\alpha_k}\geq\frac{\gamma_kp_kT_k}{\alpha_k}\overset{\eqref{def-para3}\eqref{rho2}}{=}\frac{T_{k_0}(k-k_0+7)^2}{2016(L_{\nabla \barf}+(3\sqrt{s}(k-k_0+7)/16)L_{\nabla c^2})} \nn \\
&\quad \ \, \overset{\eqref{Tk0-bnd-1}}{\geq}\frac{s(k-k_0+7)^2}{4032(L_{\nabla \barf}+(3\sqrt{s}(k-k_0+7)/16)L_{\nabla c^2})}, \label{Rk-lowbnd-1} \\
& \cL_k\overset{\eqref{Lk}}{=}\frac{\gamma_k}{\alpha_k}+(T_k-1)\frac{\gamma_k(\alpha_k+p_k)}{\alpha_k}\leq\frac{\gamma_kT_k}{\alpha_k}\overset{\eqref{def-para3}\eqref{rho2}}{\leq}\frac{s(k-k_0+7)^2}{288(L_{\nabla \barf}+(3\sqrt{s}(k-k_0+7)/16)L_{\nabla c^2})} \nn\\
&\quad \ \,   \overset{\eqref{Tk0-bnd-1}}{\leq}\frac{\sqrt{s}(k-k_0+7)}{54L_{\nabla c^2}}. \label{sub-ineq3}
\end{align}
Using \eqref{def-para3},  \eqref{rho2}, and \eqref{Tk0-bnd-1}, we obtain that for all $k\geq 1$, 
\begin{align}
&\sum_{i=1}^{k-1}\gamma_i T_i=\sum_{i=1}^{k_0}\gamma_i T_i+\sum_{i=k_0+1}^{k-1}\gamma_i T_i
\overset{\eqref{def-para3}\eqref{rho2}}{=}\sum_{i=1}^{k_0}\frac{7T_i}{48(L_{\nabla \barf}+\rho_i L_{\nabla c^2})}+\sum_{i=k_0+1}^{k-1}\frac{T_{k_0}}{8(L_{\nabla \barf}+\rho_i L_{\nabla c^2})\alpha_i} \nn\\
&\qquad\quad\ \ \overset{\eqref{Tk0-bnd-1}}{\leq}\frac{7}{48L_{\nabla c^2}}\sum_{i=1}^{k_0}\frac{T_i}{\rho_i}+\frac{s}{8L_{\nabla c^2}}\sum_{i=k_0+1}^{k-1}\frac{1}{\rho_i\alpha_i}
\overset{\eqref{def-para3}\eqref{rho2}}{=} \frac{7}{48L_{\nabla c^2}}\sum_{i=1}^{k_0}\frac{2^{i-1}}{2^{i/2}}+\frac{\sqrt{s}}{9L_{\nabla c^2}}\sum_{i=k_0+1}^{k-1}1  \nn \\ 
&\qquad\quad\ \    \leq\frac{7\times 2^{k_0/2}}{48(2-\sqrt{2})L_{\nabla c^2}}+\frac{\sqrt{s}(k-k_0-1)}{9L_{\nabla c^2}}   
  \overset{\eqref{Tk0-bnd-1}}{\leq}\frac{7\sqrt{s}}{48(\sqrt{2}-1)L_{\nabla c^2}}+\frac{\sqrt{s}(k-k_0+7)}{9L_{\nabla c^2}}, \label{sum-gt-1} \\
&\sum_{i=1}^{k-1}\frac{\gamma_iT_i}{\rho_i}=\sum_{i=1}^{k_0}\frac{\gamma_iT_i}{\rho_i}+\sum_{i=k_0+1}^{k-1}\frac{\gamma_iT_i}{\rho_i}
\overset{\eqref{def-para3}\eqref{rho2}}{=}\sum_{i=1}^{k_0}\frac{7T_i}{48(L_{\nabla \barf}+\rho_i L_{\nabla c^2})\rho_i}+\sum_{i=k_0+1}^{k-1}\frac{T_{k_0}}{8(L_{\nabla \barf}+\rho_i L_{\nabla c^2})\rho_i\alpha_i} \nn\\
&\qquad\qquad \overset{\eqref{Tk0-bnd-1}}{\leq}\frac{7}{48L_{\nabla c^2}}\sum_{i=1}^{k_0}\frac{T_i}{\rho_i^2}+\frac{s}{8L_{\nabla c^2}}\sum_{i=k_0+1}^{k-1}\frac{1}{\rho_i^2\alpha_i} 
\overset{\eqref{def-para3}\eqref{rho2}}{=}\frac{7}{48L_{\nabla c^2}}\sum_{i=1}^{k_0}\frac{2^{i-1}}{2^{i}}+ \frac{16}{27L_{\nabla c^2}}\sum_{i=k_0+1}^{k-1}(i-k_0+7)^{-1}\nn \\
&\qquad\qquad  \leq \frac{7k_0}{96L_{\nabla c^2}}+\frac{16}{27L_{\nabla c^2}} \int^k_{1} \tau^{-1}d\tau
\leq\frac{k_0}{12L_{\nabla c^2}}+\frac{3\log k}{5L_{\nabla c^2}}. \label{sum-gtr-1} 
\end{align}
Observe from \eqref{rho2} that $2^{k_0/2}=2^{(\lfloor\log_2s\rfloor+1)/2}\geq \sqrt{s}$. 
By this, \eqref{rho2},  and the convexity of the function $2^{\tau/2}$, one has
\begin{align} \label{rho-diff-1}
    \rho_{i+1}-\rho_i=\left\{\begin{array}{ll}
       2^{(i+1)/2}-2^{i/2}\leq (2^{(i+1)/2}\log2)/2\leq(\rho_{i+1}\log2)/2  & \text{if} \  \ 1\leq i< k_0, \\
       3\sqrt{s}/2-2^{k_0/2}\leq\sqrt{s}/2=\rho_{i+1}/3\leq(\rho_{i+1}\log2)/2 & \text{if} \  \  i= k_0,\\
       3\sqrt{s}((i-k_0+8)-(i-k_0+7))/16 = 3\sqrt{s}/16  & \text{if} \  \ i> k_0.
       \end{array}\right. 
\end{align}
Using \eqref{rho2}, \eqref{libound-1}, \eqref{sub-ineq3},  \eqref{rho-diff-1}, and $\rho_{i+1} \geq \rho_i$, we obtain that 
\begin{align}
&\sum_{i=1}^{k-1}\frac{\cL_i(\rho_{i+1}-\rho_{i})}{\rho_{i+1}^2}=\sum_{i=1}^{k_0}\frac{\cL_i(\rho_{i+1}-\rho_{i})}{\rho_{i+1}^2}+\sum_{i=k_0+1}^{k-1}\frac{\cL_i(\rho_{i+1}-\rho_{i})}{\rho_{i+1}^2}
\overset{\eqref{rho-diff-1}}{\leq}\sum_{i=1}^{k_0}\frac{\cL_i\log2}{2\rho_{i+1}}+\sum_{i=k_0+1}^{k-1}\frac{3\sqrt{s}\cL_i}{16\rho_{i+1}^2}\nn\\
&\qquad\qquad\qquad\quad\ \leq\sum_{i=1}^{k_0}\frac{\cL_i\log2}{2\rho_{i}}+\sum_{i=k_0+1}^{k-1}\frac{3\sqrt{s}\cL_i}{16\rho_{i+1}^2} \  \  \ (\text{due to} \ \rho_{i+1} \geq \rho_i) \nn\\
&\qquad\qquad\qquad\quad\
\leq \frac{49\log2}{1152L_{\nabla c^2}}\sum_{i=1}^{k_0}1 +\frac{8}{81L_{\nabla c^2}}\sum_{i=k_0+1}^{k-1}\frac{i-k_0+7}{(i-k_0+8)^{2}}\  \  \  (\text{due to} \  \eqref{rho2}, \eqref{libound-1}, \eqref{sub-ineq3}) \nn \\
&\qquad\qquad\qquad\quad\ \leq \frac{49k_0\log2}{1152L_{\nabla c^2}}+ \frac{8}{81L_{\nabla c^2}}\sum_{i=k_0+1}^{k-1}{(i-k_0+7)^{-1}}
\leq\frac{k_0}{24L_{\nabla c^2}}+\frac{\log k}{10L_{\nabla c^2}}. \nn
\end{align}
It follows from this, \eqref{c6-9}, \eqref{notation}, \eqref{l8-bound}, \eqref{Rk-lowbnd-1}, \eqref{sum-gt-1}, \eqref{sum-gtr-1}, $\rho_1=\sqrt{2}$, and $s\geq1$ that for all $k>k_0$, 
\begin{align}
& F_{\rho_{k}}(\tilde x_{k})-F_{\rho_{k}}^*\overset{\eqref{l8-bound}}{\leq} \mathcal{R}_{k}^{-1}\Big(\frac{D_F+\rho_1\|[c(x^0)]_+\|^2/2}{40L_{\rho_1}}+\frac{D_\cX^2}{2}+\frac{\Lambda^2}{2}\sum_{i=1}^{k-1}\frac{\gamma_iT_i}{\rho_i} +4\Lambda^2\sum_{i=1}^{k-1}\frac{\cL_i(\rho_{i+1}-\rho_{i})}{\rho_{i+1}^2} \nn \\ 
&\qquad\qquad\qquad\qquad\qquad\,+2L_{\nabla \barf}D_\cX^2\sum_{i=1}^{k-1}\gamma_i T_i\Big)\nn\\
&\leq \frac{4032L_{\nabla \barf}+756\sqrt{s}(k-k_0+7)L_{\nabla c^2}}{s(k-k_0+7)^2}\Big(\frac{D_F+\sqrt{2}\|[c(x^0)]_+\|^2/2}{40(L_{\nabla \barf}+\sqrt{2}L_{\nabla c^2})}+\frac{D_\cX^2}{2}+\frac{\Lambda^2k_0}{24L_{\nabla c^2}}\nn\\
&\quad
+\frac{3\Lambda^2\log k}{10L_{\nabla c^2}}+\frac{\Lambda^2k_0}{6L_{\nabla c^2}}+\frac{2\Lambda^2\log k}{5L_{\nabla c^2}}+\frac{7L_{\nabla \barf}D_\cX^2\sqrt{s}}{24(\sqrt{2}-1)L_{\nabla c^2}}+\frac{2L_{\nabla \barf}D_\cX^2\sqrt{s}(k-k_0+7)}{9L_{\nabla c^2}}\Big)\nn\\
&\overset{ \eqref{c6-9}}{\leq} \frac{\tC_{10}s^{-1}+\tC_{13}s^{-1/2}}{(k-k_0+7)^2}+\frac{\tC_{11}s^{-1/2}+\tC_{14}+\tC_{15}s^{-1/2}}{k-k_0+7}+\frac{\tC_{12}s^{-1}\log k}{(k-k_0+7)^2}+\frac{523\Lambda^2s^{-1/2}\log k}{k-k_0+7}\nn\\
&\quad+168L_{\nabla \barf}D_\cX^2
\nn\\
&\leq \frac{(\tC_{10}+\tC_{13})s^{-1/2}}{(k-k_0+7)^2}+\frac{\tC_{11}+\tC_{14}+\tC_{15}}{k-k_0+7}+\frac{\tC_{12}s^{-1}\log k}{(k-k_0+7)^2}+\frac{523\Lambda^2s^{-1/2}\log k}{k-k_0+7}\nn\\
&\quad+168L_{\nabla \barf}D_\cX^2,\label{t2-bound4.5}
\end{align}
where $\tC_{10}$, $\tC_{11}$, $\tC_{12}$, $\tC_{13}$, $\tC_{14}$, and $\tC_{15}$ are given in \eqref{c6-9}. Similarly, by \eqref{c6-9} and \eqref{expbound}, one has
\beq\label{t2-bound5}
\bbE[F_{\rho_{k}}(\tilde x_{k})-F_{\rho_{k}}^*]\leq \frac{\tC_{10}s^{-1}}{(k-k_0+7)^2}+\frac{\tC_{11}s^{-1/2}}{k-k_0+7}+\frac{\tC_{12}s^{-1}\log k}{(k-k_0+7)^2}+\frac{523\Lambda^2s^{-1/2}\log k}{k-k_0+7}. 
\eeq
 Using \eqref{t2-cons}, \eqref{t2-obj}, \eqref{t2-bound4.5}, and \eqref{t2-bound5}, we have
\begin{align*}
&\|[c(\tilde x_k)]_+\| \overset{\eqref{t2-cons}\eqref{t2-bound4.5}}{\leq} \frac{2\Lambda}{\rho_k}+\sqrt{\frac{2(\tC_{10}+\tC_{13})s^{-1/2}}{(k-k_0+7)^{2}\rho_k}}+\sqrt{\frac{2(\tC_{11}+\tC_{14}+\tC_{15})}{(k-k_0+7)\rho_k}} \nn \\ 
& \qquad\qquad\qquad\quad\ +\sqrt{\frac{2\tC_{12}s^{-1}\log k}{(k-k_0+7)^2\rho_k}}+\sqrt{\frac{1046\Lambda^2s^{-1/2}\log k}{(k-k_0+7)\rho_k}} +\sqrt{\frac{336L_{\nabla \barf}D_{\cX}^2}{\rho_k}}, \\
&\bbE[\|[c(\tilde x_k)]_+\|]\overset{\eqref{t2-obj}\eqref{t2-bound5}}{\leq} \frac{2\Lambda}{\rho_k}+\sqrt{\frac{2\tC_{10}s^{-1}}{(k-k_0+7)^2\rho_k}}+\sqrt{\frac{2\tC_{11}s^{-1/2}}{(k-k_0+7)\rho_k}}+\sqrt{\frac{2\tC_{12}s^{-1}\log k}{(k-k_0+7)^2\rho_k}}\\
&\qquad\qquad\qquad\qquad\ +\sqrt{\frac{1046\Lambda^2s^{-1/2}\log k}{(k-k_0+7)\rho_k}}, 
\end{align*}
which, together with \eqref{opt-gap}, $\rho_k=3\sqrt{s}(k-k_0+7)/16$, and the relation $k-k_0+7\geq k/2$ for all $k\geq \max\{k_0+1,2(k_0-7)\}$,  imply that \eqref{thm3-ineq4-1}, \eqref{thm3-ineq4}, and \eqref{thm3-ineq6} hold. In addition, \eqref{thm3-ineq5} follows from $k \geq 2(k_0-7)$, \eqref{opt-gap}, and \eqref{t2-bound5}. 
\end{proof}

\section{Concluding remarks}\label{sec:conclude}

In this paper, we proposed first-order methods for computing an $\epsilon$-\emph{surely feasible} stochastic optimal ($\epsilon$-SFSO) solution for both stochastic and finite-sum convex optimization problems with deterministic constraints, and established first-order oracle (FO) complexity results for these methods. As a byproduct, we also derived FO complexity results for the sample average approximation (SAA) method in computing an $\epsilon$-SFSO solution of the stochastic optimization problem, using the proposed stochastic first-order method (Algorithm~\ref{alg2}) to solve the sample average problem.

Although we focused on problem \eqref{prob1} with convex inequality constraints, our algorithms and results can be directly extended to problems that include both affine equality constraints and convex inequality constraints. Furthermore, they can also be extended to problems with convex conic constraints of the form $c(x) \in -\mathcal{K}$, where $\mathcal{K}$ is a closed convex cone whose projection operator can be evaluated exactly, and $c$ is both Lipschitz smooth and $\mathcal{K}$-convex; that is, $\alpha c(x) + (1-\alpha) c(y) - c(\alpha x + (1 - \alpha) y) \in \mathcal{K}$ for all $x, y \in \mathbb{R}^n$ and $\alpha \in [0,1]$.

 An interesting direction for future research is to develop a stochastic first-order method for solving finite-sum convex optimization problems with deterministic constraints that achieves an FO complexity of $\mathcal{O}(s \log s + \sqrt{s} \epsilon^{-1})$ for computing an $\epsilon$-SFSO solution. This would significantly improve upon the current complexity of $\mathcal{O}(s \log s + \sqrt{s} \epsilon^{-3/2})$ achieved by our proposed method, and would match the optimal complexity known for computing an $\epsilon$-\emph{expectedly feasible} stochastic optimal solution.


\begin{thebibliography}{10}

\bibitem{allen2018katyusha}
Z.~Allen-Zhu.
\newblock Katyusha: The first direct acceleration of stochastic gradient
  methods.
\newblock {\em Journal of Machine Learning Research}, 18(221):1--51, 2018.

\bibitem{bertsekasnetwork}
D.~Bertsekas.
\newblock Network optimization: continuous and discrete methods, ser.
  optimization and neural computation series. {Athena Scientific}, 1998.

\bibitem{betts2010practical}
J.~T. Betts.
\newblock {\em Practical methods for optimal control and estimation using
  nonlinear programming}.
\newblock SIAM, 2010.

\bibitem{birge2011introduction}
J.~R. Birge and F.~Louveaux.
\newblock {\em Introduction to Stochastic Programming}.
\newblock Springer, New York, 2nd edition, 2011.

\bibitem{boob2023stochastic}
D.~Boob, Q.~Deng, and G.~Lan.
\newblock Stochastic first-order methods for convex and nonconvex functional
  constrained optimization.
\newblock {\em Mathematical Programming}, 197(1):215--279, 2023.

\bibitem{bottou2018optimization}
L.~Bottou, F.~E. Curtis, and J.~Nocedal.
\newblock Optimization methods for large-scale machine learning.
\newblock {\em SIAM review}, 60(2):223--311, 2018.

\bibitem{chang2011libsvm}
C.-C. Chang and C.-J. Lin.
\newblock Libsvm: a library for support vector machines.
\newblock {\em ACM transactions on intelligent systems and technology (TIST)},
  2(3):1--27, 2011.

\bibitem{combettes2011proximal}
P.~L. Combettes and J.-C. Pesquet.
\newblock Proximal splitting methods in signal processing.
\newblock {\em Fixed-point algorithms for inverse problems in science and
  engineering}, pages 185--212, 2011.

\bibitem{cuomo2022scientific}
S.~Cuomo, V.~S. Di~Cola, F.~Giampaolo, G.~Rozza, M.~Raissi, and F.~Piccialli.
\newblock Scientific machine learning through physics--informed neural
  networks: Where we are and what’s next.
\newblock {\em Journal of Scientific Computing}, 92(3):88, 2022.

\bibitem{defazio2014saga}
A.~Defazio, F.~Bach, and S.~Lacoste-Julien.
\newblock {SAGA}: A fast incremental gradient method with support for
  non-strongly convex composite objectives.
\newblock {\em Advances in neural information processing systems}, 27, 2014.

\bibitem{fontaine2020adaptive}
X.~Fontaine, S.~Mannor, and V.~Perchet.
\newblock An adaptive stochastic optimization algorithm for resource
  allocation.
\newblock In {\em Algorithmic Learning Theory}, pages 319--363. PMLR, 2020.

\bibitem{hendrikx2021optimal}
H.~Hendrikx, F.~Bach, and L.~Massoulie.
\newblock An optimal algorithm for decentralized finite-sum optimization.
\newblock {\em SIAM Journal on Optimization}, 31(4):2753--2783, 2021.

\bibitem{hoeffding1963probability}
W.~Hoeffding.
\newblock Probability inequalities for sums of bounded random variables.
\newblock {\em Journal of the American Statistical Association}, pages 13--30,
  1963.

\bibitem{jiang2018consensus}
Z.~Jiang, K.~Mukherjee, and S.~Sarkar.
\newblock On consensus-disagreement tradeoff in distributed optimization.
\newblock In {\em 2018 Annual American Control Conference (ACC)}, pages
  571--576. IEEE, 2018.

\bibitem{karniadakis2021physics}
G.~E. Karniadakis, I.~G. Kevrekidis, L.~Lu, P.~Perdikaris, S.~Wang, and
  L.~Yang.
\newblock Physics-informed machine learning.
\newblock {\em Nature Reviews Physics}, 3(6):422--440, 2021.

\bibitem{kleywegt2002sample}
A.~J. Kleywegt, A.~Shapiro, and T.~Homem-de Mello.
\newblock The sample average approximation method for stochastic discrete
  optimization.
\newblock {\em SIAM Journal on Optimization}, 12(2):479--502, 2002.

\bibitem{kovalev2022first}
D.~Kovalev and A.~Gasnikov.
\newblock The first optimal algorithm for smooth and
  strongly-convex-strongly-concave minimax optimization.
\newblock {\em Advances in Neural Information Processing Systems},
  35:14691--14703, 2022.

\bibitem{lan2012optimal}
G.~Lan.
\newblock An optimal method for stochastic composite optimization.
\newblock {\em Mathematical Programming}, 133(1):365--397, 2012.

\bibitem{lan2019unified}
G.~Lan, Z.~Li, and Y.~Zhou.
\newblock A unified variance-reduced accelerated gradient method for convex
  optimization.
\newblock {\em Advances in Neural Information Processing Systems}, 32, 2019.

\bibitem{lan2018optimal}
G.~Lan and Y.~Zhou.
\newblock An optimal randomized incremental gradient method.
\newblock {\em Mathematical Programming}, 171:167--215, 2018.

\bibitem{lin2018level}
Q.~Lin, R.~Ma, and T.~Yang.
\newblock Level-set methods for finite-sum constrained convex optimization.
\newblock In {\em Proceedings of the 35th International Conference on Machine
  Learning}, pages 3112--3121. PMLR, 2018.

\bibitem{liu2015sparse}
B.~Liu, M.~Wang, H.~Foroosh, M.~Tappen, and M.~Pensky.
\newblock Sparse convolutional neural networks.
\newblock In {\em Proceedings of the IEEE Conference on Computer Vision and
  Pattern Recognition}, pages 806--814, 2015.

\bibitem{lu2023iteration}
Z.~Lu and Z.~Zhou.
\newblock {Iteration-complexity of first-order augmented Lagrangian methods for
  convex conic programming}.
\newblock {\em SIAM Journal on Optimization}, 33(2):1159--1190, 2023.

\bibitem{luke2020proximal}
D.~R. Luke.
\newblock Proximal methods for image processing.
\newblock {\em Nanoscale Photonic Imaging}, 134:165, 2020.

\bibitem{maggioni2009stochastic}
F.~Maggioni, M.~Kaut, and L.~Bertazzi.
\newblock Stochastic optimization models for a single-sink transportation
  problem.
\newblock {\em Computational Management Science}, 6:251--267, 2009.

\bibitem{nemirovski2009robust}
A.~Nemirovski, A.~Juditsky, G.~Lan, and A.~Shapiro.
\newblock Robust stochastic approximation approach to stochastic programming.
\newblock {\em SIAM Journal on optimization}, 19(4):1574--1609, 2009.

\bibitem{rees2010optimal}
T.~Rees, H.~S. Dollar, and A.~J. Wathen.
\newblock Optimal solvers for {PDE}-constrained optimization.
\newblock {\em SIAM Journal on Scientific Computing}, 32(1):271--298, 2010.

\bibitem{schmidt2017minimizing}
M.~Schmidt, N.~Le~Roux, and F.~Bach.
\newblock Minimizing finite sums with the stochastic average gradient.
\newblock {\em Mathematical Programming}, 162:83--112, 2017.

\bibitem{shabazbegian2020stochastic}
V.~Shabazbegian, H.~Ameli, M.~T. Ameli, and G.~Strbac.
\newblock Stochastic optimization model for coordinated operation of natural
  gas and electricity networks.
\newblock {\em Computers \& Chemical Engineering}, 142:107060, 2020.

\bibitem{shalev2013stochastic}
S.~Shalev-Shwartz and T.~Zhang.
\newblock Stochastic dual coordinate ascent methods for regularized loss
  minimization.
\newblock {\em Journal of Machine Learning Research}, 14(1), 2013.

\bibitem{shapiro2003monte}
A.~Shapiro.
\newblock Monte carlo simulation approach to stochastic programming.
\newblock In A.~Ruszczy{\'n}ski and A.~Shapiro, editors, {\em Stochastic
  Programming}, volume~10 of {\em Handbooks in Operations Research and
  Management Science}, pages 353--425. Elsevier, 2003.

\bibitem{shapiro2009lectures}
A.~Shapiro, D.~Dentcheva, and A.~Ruszczy{\'n}ski.
\newblock {\em Lectures on Stochastic Programming: Modeling and Theory}.
\newblock SIAM, Philadelphia, PA, 2nd edition, 2009.

\bibitem{shapiro2000rate}
A.~Shapiro and T.~Homem-de Mello.
\newblock On the rate of convergence of optimal solutions of monte carlo
  approximations of stochastic programs.
\newblock {\em SIAM Journal on Optimization}, 11(1):70--86, 2000.

\bibitem{shehadeh2022stochastic}
K.~S. Shehadeh and R.~Padman.
\newblock Stochastic optimization approaches for elective surgery scheduling
  with downstream capacity constraints: Models, challenges, and opportunities.
\newblock {\em Computers \& Operations Research}, 137:105523, 2022.

\bibitem{song2020variance}
C.~Song, Y.~Jiang, and Y.~Ma.
\newblock Variance reduction via accelerated dual averaging for finite-sum
  optimization.
\newblock {\em Advances in Neural Information Processing Systems}, 33:833--844,
  2020.

\bibitem{xiao2014proximal}
L.~Xiao and T.~Zhang.
\newblock A proximal stochastic gradient method with progressive variance
  reduction.
\newblock {\em SIAM Journal on Optimization}, 24(4):2057--2075, 2014.

\bibitem{xu2021iteration}
Y.~Xu.
\newblock {Iteration complexity of inexact augmented Lagrangian methods for
  constrained convex programming}.
\newblock {\em Mathematical Programming}, 185:199--244, 2021.

\bibitem{zhou2019lower}
D.~Zhou and Q.~Gu.
\newblock Lower bounds for smooth nonconvex finite-sum optimization.
\newblock In {\em International Conference on Machine Learning}, pages
  7574--7583. PMLR, 2019.

\end{thebibliography}
\end{document}